\newcounter{main}
\numberwithin{equation}{section}
\newtheorem{theorem}{Theorem}[section]
\newtheorem{proposition}[theorem]{Proposition}
\newtheorem{lemma}[theorem]{Lemma}
\newtheorem{corollary}[theorem]{Corollary}
\newtheorem{remark}{Remark}[section]
\newtheorem{definition}{Definition}[section]
\newtheorem{maintheorem}{Theorem}
\newcommand{\blanksquare}{\,\,\,$\sqcup\!\!\!\!\sqcap$}
\newcounter{example}
{{\stepcounter{example}}{\flushleft {\bf Example \arabic{example}:}}}%
{\par}
\title[Geometric and ergodic theory of conservative flows]
{Contributions to the geometric and ergodic theory of conservative flows}
\author[M. Bessa]{M\'{a}rio Bessa}
\author[J. Rocha]{Jorge Rocha}
\address{Departamento de Matem\'atica Pura da Faculdade de Ci\^encias da Universidade do Porto, 
Rua do Campo Alegre, 687, 
4169-007 Porto, Portugal}
\email{bessa@fc.up.pt}
\email{jrocha@fc.up.pt}
\date{21/10/2008}
\begin{document}

\begin{abstract}
We  prove the following dichotomy for vector fields in a $C^1$-residual subset of volume-preserving flows: 
for Lebesgue almost every point all Lyapunov exponents equal to zero  or   its orbit has a dominated splitting. As a consequence if we have a vector field in this residual that cannot be $C^1$-approximated by a vector field having elliptic periodic orbits, then, there exists a full measure set such that every orbit of this set admits a dominated splitting for the linear Poincar\'e flow. Moreover, we prove that a volume-preserving and $C^1$-stably ergodic flow can be $C^1$-approximated by another volume-preserving flow which is non-uniformly hyperbolic. 
\end{abstract}

\maketitle

\noindent\emph{MSC 2000:} primary 37D30, 37D25; secondary 37A99, 37C10.\\
\emph{keywords:} Volume-preserving flows; Lyapunov exponents; Dominated splitting; Stable ergodicity.\\

\begin{section}{Introduction and statement of the results}
Let $M$ be a $d$-dimensional, $d\geq 3$, compact, connected and boundaryless Riemaniann manifold endowed with a volume-form $\omega$ and let $\mu$ denote the Lebesgue measure associated to it. We denote by $\mathfrak{X}^{1}_\mu(M)$ the space of $C^1$ vector fields $X$ over $M$ such that $X$ is \emph{divergence-free}, that is its associated flow  $X^t$ preserves the measure $\mu$. We consider $\mathfrak{X}^{1}_\mu(M)$ endowed with the usual Whitney $C^1$-topology. 

Given a flow $X^t$ one usually deduces properties of it by studying its linear approximation. One way to do that is by considering the Lyapunov exponents which, in broad terms, detect if there are any exponential behavior of the linear tangent map along orbits. Given $X\in\mathfrak{X}^{1}_\mu(M)$ the existence of Lyapunov exponents for almost every point is guaranteed by  Oseledets' theorem (\cite{O}). Positive (or negative) exponents assure, in average,  exponential rate of divergence (or convergence) of two neighboring trajectories, whereas zero exponents give us the lack of any kind of average exponential behavior. A flow is said to be \emph{nonuniformly hyperbolic} if its Lyapunov exponents are all different from zero. In ~\cite{HPT} Hu, Pesin and Talitskaya gave examples of nonuniformly hyperbolic flows in any manifold. Non-zero exponents plus some smoothness assumptions on the flow allows us to obtain invariant manifolds dynamically defined (see~\cite{P}). Since this stable/unstable manifold theory is the base of capital results on dynamical systems nowadays it is of extreme importance to detect when we have nonzero Lyapunov exponents. 

In the beginning of the 1980' Ricardo Ma\~n\'e, in~\cite{M1}, announced a dichotomy for $C^1$-generic discrete-time conservative systems which in broad terms says that for Lebesgue almost every point its Lyapunov exponents are all equal to zero or else there exists a weak form of uniform hyperbolicity along its orbit.

It is well-known that hyperbolicity plays a crucial role if one wants to obtain stability. Briefly speaking, hyperbolicity means uniform expansion (or contraction) by the tangent map along the orbits and when restricted to particular invariant subbundles. A quintessential example is an Anosov flow~\cite{An}.

By a weak form of hyperbolicity we mean uniformly contraction of the ratio between the dynamical behavior of the tangent map when computed in an invariant subbundle and the dynamical behavior of the tangent map restricted to another invariant subbundle which is most contracting (or less expanding) than the first mentioned.

Later, in~\cite{M2}, Ma\~n\'e presented the guidelines for the proof of the aforementioned dichotomy in the surfaces case. However, it was ne\-cessary more ingredients and new tools to obtain a complete proof (see the work of Bochi~\cite{Bo}). Then, in a remarkable paper~\cite{BV}, Bochi and Viana extended the Bochi-Ma\~n\'e theorem to any dimensional manifolds and recently Bochi (see~\cite{Bo2}) was able to obtain the full statement announced in~\cite{M1} for the symplectomorphisms setting.

For the flow setting the first author proved in~\cite{B0} the three-dimensional version for vectors fields without equilibrium points and also a weak version for general divergence-free vector fields. Later, in ~\cite[Theorem A]{AB}, a global version for vector fields with equilibrium points was obtained. In ~\cite{B1} was proved a version for linear differential systems with conservativeness properties and in~\cite{BD} was obtained a similar result in the Hamiltonians setting. 

After the perturbation techniques developed in~\cite{B0} and in~\cite{B1}  we expected to obtain Bochi-Viana's theorem for a $C^1$-dense subset of $\mathfrak{X}^{1}_\mu(M)$, however by an upgrade refinement on the perturbation framework we were able to obtain this result for a $C^1$-residual subset of $\mathfrak{X}^{1}_\mu(M)$, thus achieve the full counterpart of~\cite[Theorem 2]{BV}.

More precisely we prove the following result.

\begin{maintheorem}\label{T1}
There exists $C^1$-residual set $\mathcal{E} \subset  \mathfrak{X}^{1}_\mu(M)$ such that if $X \in \mathcal{E}$ then there exist two $X^t$-invariant subsets of $M$, $\mathcal{Z}$ and  $\mathcal{D}$, whose union has full measure and such that:
\begin{itemize}
\item if $p \in \mathcal{Z}$ then all the Lyapunov exponents associated to $p$ are zero;
\item if $p \in \mathcal{D}$ then its orbit admits a dominated splitting for the linear Poincar\'e flow.
\end{itemize}
\end{maintheorem}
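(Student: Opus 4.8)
\medskip
\noindent\textbf{Strategy of the proof of Theorem~\ref{T1}.}
The plan is to adapt the argument of Bochi and Viana for conservative diffeomorphisms (\cite{BV}) to the linear Poincar\'e flow $P^t_X$ of a divergence-free vector field, feeding it the conservative perturbation machinery developed in \cite{B0,B1,AB}. For $X\in\mathfrak{X}^{1}_\mu(M)$ let $\lambda_1(X,x)\ge\cdots\ge\lambda_{d-1}(X,x)$ be the Lyapunov exponents of $P^t_X$ at a regular point $x$ (defined for $\mu$-a.e.\ $x$, the singularities of $X$ forming a $\mu$-negligible set), and for $1\le j\le d-1$ set
\[
LE_j(X)=\int_M\big(\lambda_1(X,x)+\cdots+\lambda_j(X,x)\big)\,d\mu(x).
\]
Since $t\mapsto\int_M\log\|\wedge^{j}P^t_X(x)\|\,d\mu(x)$ is subadditive and, for each fixed $t$, continuous in $X$ (the integrability near the singular set being handled as in \cite{AB}), each $LE_j$ is an infimum of continuous functions, hence upper semicontinuous on the Baire space $\mathfrak{X}^{1}_\mu(M)$. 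Consequently the set $\mathcal{R}_j$ of continuity points of $LE_j$ is residual, and so is $\mathcal{E}:=\bigcap_{j=1}^{d-1}\mathcal{R}_j$.

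The heart of the matter is a local dichotomy: I claim that if $X\in\mathcal{E}$, then for $\mu$-a.e.\ $x$ and every $j$, either $\lambda_j(X,x)=\lambda_{j+1}(X,x)$, or the orbit of $x$ admits an index-$j$ dominated splitting for $P^t_X$. I would prove this by contradiction. Suppose that, for some $j$, there is an $X^t$-invariant set $\Gamma$ of positive $\mu$-measure on which $\lambda_j>\lambda_{j+1}$ while the sum $E^j_x$ of the Oseledets subspaces of the $j$ largest exponents does not dominate its complementary Oseledets subspace along the orbit. Then one performs a Ma\~n\'e-type perturbation: along a Vitali-type family of flowboxes covering most of $\Gamma$ one inserts small \emph{divergence-free} perturbations of $X$ that rotate the $(j{+}1)$-th Oseledets direction towards the $j$-th one (the absence of domination is precisely what prevents the dynamics from undoing these rotations), producing $Y$ arbitrarily $C^1$-close to $X$ with $LE_j(Y)\le LE_j(X)-c$ for some $c>0$ independent of the closeness; this contradicts continuity of $LE_j$ at $X$. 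Making these rotations realizable by divergence-free vector fields with a uniform lower bound on the resulting drop of $LE_j$ is the main obstacle, and it is the combination of this perturbation with the semicontinuity/continuity-point mechanism — rather than a single perturbation — that upgrades the previously known $C^1$-dense statements of \cite{B0,B1,AB} to a $C^1$-residual one.

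Finally I would assemble the two invariant sets. Because $P^t_X$ is the linear Poincar\'e flow of a volume-preserving flow, $\lambda_1(X,x)+\cdots+\lambda_{d-1}(X,x)=0$ at every regular point (the flow direction carries the zero exponent and $DX^t$ preserves $\mu$). Let $\mathcal{D}$ be the set of points whose orbit admits a dominated splitting of some index $j\in\{1,\dots,d-1\}$ for $P^t_X$, and $\mathcal{Z}$ the set of points at which all Lyapunov exponents of $DX^t$ vanish; both are $X^t$-invariant. For $\mu$-a.e.\ $x\notin\mathcal{D}$ the local dichotomy forces $\lambda_j(X,x)=\lambda_{j+1}(X,x)$ for every $j$, so all the exponents of $P^t_X$ at $x$ coincide and, summing to zero, they all vanish; together with the zero exponent along the flow this gives $x\in\mathcal{Z}$. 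Hence $\mathcal{Z}\cup\mathcal{D}$ has full measure for every $X\in\mathcal{E}$, which is the required residual set. Orbits accumulating the measure-zero singular set are dealt with using the local description of the Poincar\'e flow near equilibria from \cite{AB}, but do not affect the almost-everywhere conclusion.
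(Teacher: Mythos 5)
Your overall skeleton coincides with the paper's: upper semicontinuity of the integrated exponents, the residual set $\mathcal{E}$ of continuity points, a perturbation producing a definite drop of the integrated exponent wherever $\lambda_k>\lambda_{k+1}$ on a positive measure set without index-$k$ domination, and the final assembly of $\mathcal{Z}$ and $\mathcal{D}$ exactly as in Section~\ref{PT1}. The genuine gap is that the step you yourself call ``the main obstacle'' --- realizing the Ma\~n\'e-type rotations by \emph{divergence-free vector fields} with a drop of $LE_j$ bounded below independently of the $C^1$-size of the perturbation --- is precisely the technical content of the paper, and it is neither proved nor quotable from \cite{B0,B1,AB}: \cite{B0} and \cite{AB} are three-dimensional, and in \cite{B1} the perturbation is performed on an abstract linear differential system and is not realized by a vector field. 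In the flow setting there is no Franks-type lemma allowing one to compose the time-one derivative with a rotation; the paper must introduce the notion of $(\epsilon,\kappa)$-realizable linear flow (Definition~\ref{rlf}), locally smooth and rectify the field via Zuppa's theorem \cite{Z} together with the pasting lemma \cite{AM}, and realize a rotation only on a subset $K\subset U$ with $\overline{\mu}(K)>(1-\kappa)\overline{\mu}(U)$ (Lemma~\ref{rot1}).

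A second, related omission: your contradiction requires rotating by a definite total angle, hence (to stay $\epsilon$-$C^1$-close) a concatenation of roughly $m\geq 2\pi/\theta$ small rotations along the orbit; but each realization only controls the linear Poincar\'e flow on a set of relative measure $1-\kappa_i$, and naive concatenation of many realizable flows gives no useful measure estimate (Remark~\ref{conc}). Overcoming this loss is the point of Lemma~\ref{sequences} (simultaneous realization of a long string of small elliptical rotations along right cylinders, with the modified section volume controlled via Lemma~\ref{map}) and of the three-case analysis of Proposition~\ref{exchange} (small angle between the bundles, large norm ratio over some sub-interval, or neither), which then feed the local and global Lemmas~\ref{local} and~\ref{continuous}; only after these is your continuity-point argument available. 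Without them the claimed uniform drop $LE_j(Y)\le LE_j(X)-c$ is unsupported. (Minor points: the index $j$ should range in $\{1,\dots,d-2\}$, since an index-$(d-1)$ splitting of the normal bundle is trivial, and the sum of the exponents of $P_X^t$ vanishes only $\mu$-almost everywhere, not at every regular point; neither affects the structure of the argument.)
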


We point out that the  abundance of zero exponents from the generic point of view seems to be strongly related to the topology used, namely to the $C^1$-topology. On the other hand recent results obtained by Viana (\cite{V}) show that, in a prevalent way, H\"older continuous linear cocycles based on a uniformly hyperbolic system with local product structure have nonzero Lyapunov exponents.

Recall that $X\in\mathfrak{X}^{1}_\mu(M)$ is \emph{ergodic} if any measurable $X^t$-invariant set is a  zero measure set or is a full measure set. We say that $X\in\mathfrak{X}^{1}_\mu(M)$ is a $C^1$-\emph{stably ergodic} flow if there exists a $C^1$-neighborhood of $X$ such that any $Y\in\mathfrak{X}^{1}_\mu(M)$ is ergodic. 

Let us denote by $\mathcal{SE}^1$ the space of the $C^1$-stably ergodic flows in $\mathfrak{X}^{1}_\mu(M)$. We refer the reader to the survey of Pugh and Shub (\cite{PS}) on properties of these systems. From Theorem~\ref{T1} it follows that if $X \in \mathcal{E }\cap \mathcal{SE}^1$ then either the set $\mathcal{Z}$ has full measure or else $\mathcal{D}$ is a full measure set. In the next result we prove that, for an open and dense subset of 
$\mathcal{SE}^1$, actually $\mathcal{Z}$ has zero measure and the pointwise domination given by Theorem~\ref{T1} for points in the full measure set $\mathcal{D}$ is in fact uniform. This result is a (strong) continuous-time version  of Bochi-Fayad-Pujals Theorem for conservative diffeomorphism (\cite{BFP}).

\begin{maintheorem}\label{T2}
There exists $C^1$-open and dense set $\mathcal{U} \subset \mathcal{SE}^1$ such that if $X \in \mathcal{U}$ then $X^t$ is a non-uniformly hyperbolic flow and $X$ admits a dominated splitting.
\end{maintheorem}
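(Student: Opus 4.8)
\emph{The plan} is to feed Theorem~\ref{T1} into the $C^1$-perturbation machinery for the linear Poincar\'e flow (as in \cite{B0,B1,AB,BV}), together with the principle that elliptic closed orbits obstruct $C^1$-stable ergodicity. Since $\mathcal{SE}^1$ is $C^1$-open it is a Baire space, so $\mathcal{E}\cap\mathcal{SE}^1$ is residual, hence dense, in $\mathcal{SE}^1$; thus it will be enough to $C^1$-approximate each $X\in\mathcal{E}\cap\mathcal{SE}^1$ by a flow possessing a whole $C^1$-neighbourhood (inside the open set $\mathcal{SE}^1$) of flows that are simultaneously non-uniformly hyperbolic and have a dominated splitting for the linear Poincar\'e flow, and then take $\mathcal{U}$ to be the union of these neighbourhoods.

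\emph{First} I would exclude the zero set. Fix $X\in\mathcal{E}\cap\mathcal{SE}^1$; by Theorem~\ref{T1} the $X^t$-invariant sets $\mathcal{Z}$ and $\mathcal{D}$ cover $M$ up to null sets and are essentially disjoint (a point with all exponents zero cannot carry a nontrivial dominated splitting of the linear Poincar\'e flow), so ergodicity forces $\mu(\mathcal{Z})=1$ or $\mu(\mathcal{D})=1$. If $\mu(\mathcal{Z})=1$ then a.e.\ orbit has no dominated splitting, so by the Ma\~n\'e--Bochi--Viana rotation technique in the flow setting (\cite{B0,AB}) $X$ is $C^1$-approximated by a flow with an elliptic closed orbit; one further small perturbation makes this orbit totally elliptic and non-degenerate (twist and Diophantine frequencies), so it carries KAM-persistent invariant tori and the flow is non-ergodic --- contradicting $X\in\mathcal{SE}^1$. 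Therefore $\mu(\mathcal{D})=1$.

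\emph{Next} I would uniformize the splitting: since $\mu$ has full support and $X$ is ergodic, the pointwise dominated splitting on $\mathcal{D}$ has a.e.\ constant index and (on a full-measure, hence dense, subset of $M\setminus\mathrm{Sing}(X)$) uniform domination constants, so, domination being a closed condition, it extends continuously to a uniform dominated splitting $N=N_1\oplus\cdots\oplus N_k$ of the linear Poincar\'e flow over $M\setminus\mathrm{Sing}(X)$, which I take to be the finest one. Each $N_j$ with $\dim N_j\geq2$ is indecomposable, so the restricted linear Poincar\'e flow admits no dominated splitting and the cocycle version of the Bochi--Viana theorem adapted to flows (\cite{B1,BV}) lets me $C^1$-perturb $X$ so that all Lyapunov exponents inside $N_j$ become equal to their average $a_j$; combining this over all such $j$ with a conservative Franks-type lemma that makes the exponent of every one-dimensional $N_j$ nonzero, I obtain $X'$ close to $X$. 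If some $a_j=0$ then $X'$ has an all-zero indecomposable block, and the argument of the previous paragraph produces a non-ergodic $C^1$-perturbation of $X'$, again contradicting $X\in\mathcal{SE}^1$; hence every $a_j\neq0$ and $X'$ is non-uniformly hyperbolic.

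\emph{Finally} I would check robustness: the dominated splitting of $X'$ persists $C^1$-openly and its subbundles vary continuously, so along one-dimensional blocks the Lyapunov exponent is a continuous function of the flow and stays nonzero, while for a block with all exponents equal to $a_j\neq0$ the upper/lower semicontinuity of the extreme exponents of the restricted Poincar\'e flow keeps all of its exponents strictly on one side of $0$; hence a whole $C^1$-neighbourhood of $X'$ inside $\mathcal{SE}^1$ consists of non-uniformly hyperbolic flows with a dominated splitting, and the union of such neighbourhoods over $X\in\mathcal{E}\cap\mathcal{SE}^1$ is the required open dense set $\mathcal{U}$. The hard part is the perturbative core of the third paragraph together with the ``elliptic $\Rightarrow$ non-ergodic'' input: equalizing the exponents inside an indecomposable block by a divergence-free $C^1$-perturbation requires the full Bochi--Viana scheme transported to the linear Poincar\'e flow, and turning a surviving zero exponent into an honest failure of stable ergodicity is delicate; by contrast, once these are available, it is precisely the semicontinuity of the extreme Lyapunov exponents that upgrades non-uniform hyperbolicity to a $C^1$-open property.
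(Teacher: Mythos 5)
There is a genuine gap at the perturbative core of your third paragraph, precisely where the paper has to invoke Theorem~\ref{removing} (the flow version of Baraviera--Bonatti proved in \cite{BR}). Once you have a finest dominated splitting $N=N_1\oplus\cdots\oplus N_k$, your two devices for killing zero exponents do not work. For a block with $\dim N_j\geq 2$ and block-average $a_j=0$ you propose to rerun the ``elliptic $\Rightarrow$ non-ergodic'' argument, but that argument requires the absence of domination along the orbit so that a closed orbit can be made \emph{totally} elliptic; here the global dominated splitting is robust, so every periodic orbit of every $C^1$-nearby flow inherits it, and the most you can create is an orbit elliptic \emph{inside} the block and dominated transversally --- such an orbit yields no codimension-one invariant tori and no known obstruction to stable ergodicity. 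For one-dimensional blocks, a ``conservative Franks-type lemma'' cannot make $\Sigma^i=\int_M\log|P^1_X(p)|_{N^i}|\,d\mu$ nonzero: Franks-type perturbations modify the derivative along a single periodic orbit and do not move a $\mu$-integrated quantity. The paper removes zero block-sums by Theorem~\ref{removing} applied to a \emph{stably} finest splitting (stability is needed so that the subsequent perturbations, including the equalization step, do not refine the block structure), and then Lemma~\ref{step3} makes the exponents within each block equal only up to $\delta$ (your ``equal to their average'' is an overstatement, though nearly-equal suffices). Without Theorem~\ref{removing} or an equivalent mechanism your proof does not close.

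The first two paragraphs also diverge from the paper in a way that imports extra unproved machinery. Your derivation of a uniform global dominated splitting from Theorem~\ref{T1} plus ergodicity and full support is sound as far as it goes (one of the countably many invariant sets $\Lambda_k(X,m)$ has full measure, and property (E) extends the splitting to its closure, which is $M$ once singularities are excluded --- note you should quote \cite{BR2} for $Sing(X)=\emptyset$ on $\mathcal{SE}^1$), but it only starts after you have excluded $\mu(\mathcal{Z})=1$, and that exclusion rests on ``totally elliptic closed orbit $\Rightarrow$ some nearby conservative flow is non-ergodic''. In dimension $d\geq 4$ this needs smoothing via Zuppa's theorem plus a KAM/invariant-tori theorem for volume-preserving flows near a totally elliptic closed orbit, with twist and Diophantine conditions arranged by a further perturbation; none of this is stated, referenced, or needed in the paper, which instead gets the global dominated splitting directly from Theorem 1.2 of \cite{BR2} (density of $\mathcal{DSE}$ in $\mathcal{SE}^1$) and never confronts the all-zero case. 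Your final openness argument, via semicontinuity of the extreme exponents of the restricted linear Poincar\'e flow on a splitting with $\lambda_j>0>\lambda_{j+1}$, is essentially the paper's and is fine.
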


Next theorem shows that if a $X \in \mathcal{E}$ does not belong to the closure of those having elliptic closed orbits then it exhibits some kind of weak hyperbolicity. More precisely we obtain the following result. 

\begin{maintheorem}\label{T3} Let $X \in \mathcal{E}$ such that $X$ cannot be $C^1$-approximated by a vector field $Y$ having elliptic periodic orbits. Then, there exists a full measure set $N\subset M$ such that  every orbit of $N$ admits a dominated splitting for the linear Poincar\'e flow.
\end{maintheorem}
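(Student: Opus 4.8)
The plan is to obtain Theorem~\ref{T3} as a consequence of Theorem~\ref{T1} by showing that, under the hypothesis on $X$, the set $\mathcal{Z}$ produced by Theorem~\ref{T1} is a $\mu$-null set; one then takes $N:=\mathcal{D}$. First I would observe that an orbit carrying a nontrivial dominated splitting $E\oplus F$ for the linear Poincar\'e flow cannot have all its Lyapunov exponents equal to zero: writing $\Phi^t$ for the linear Poincar\'e flow and $\mathbf{m}(\cdot)$ for the conorm, domination gives a uniform $m$ and $\lambda<1$ with $\nrm{\Phi^{m}|_{E_x}}\le\lambda\,\mathbf{m}(\Phi^{m}|_{F_x})$ along the orbit, and iterating this forces the top exponent on $E$ to be strictly smaller than the bottom exponent on $F$, so at least one exponent is nonzero. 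Hence $\mathcal{Z}$ and $\mathcal{D}$ are disjoint modulo a $\mu$-null set, so $\mu(\mathcal{Z})=1-\mu(\mathcal{D})$, and it suffices to prove: if $X\in\mathcal{E}$ and $\mu(\mathcal{Z})>0$, then $X$ lies in the $C^1$-closure of the divergence-free vector fields possessing an elliptic periodic orbit, contradicting the assumption.

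To prove this implication I would argue by perturbation, reusing the machinery behind Theorem~\ref{T1}. Fix $\epsilon>0$. Since $\mathcal{Z}$ is $X^t$-invariant with $\mu(\mathcal{Z})>0$, Poincar\'e recurrence together with the Birkhoff ergodic theorem supplies a point $p\in\mathcal{Z}$ that is recurrent, Birkhoff-generic for some ergodic component of $\mu|_{\mathcal{Z}}$, well-closable, and at which all Lyapunov exponents of the linear Poincar\'e flow on the normal bundle vanish; by Oseledets' theorem the cocycle along the orbit of $p$ then grows subexponentially in both time directions, so this orbit admits no dominated splitting. Using a version of the ergodic closing lemma for conservative flows (in the spirit of~\cite{B0,AB}) I would close a long segment of the orbit of $p$: for a suitable large return time there is $Y\in\mathfrak{X}^1_\mu(M)$ with $\nrm{Y-X}_{C^1}<\epsilon/3$ having a periodic orbit $\gamma$ whose linear Poincar\'e return map is uniformly close to the linear Poincar\'e flow of $X$ over that segment. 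The absence of domination along the orbit of $p$ provides, over long segments, the near-collapses of the angles between Oseledets directions required by the continuous-time Bochi--Ma\~n\'e--Viana rotation perturbations developed in~\cite{B0,B1} --- the very perturbations underlying Theorem~\ref{T1} --- and running these inside a tubular neighbourhood of $\gamma$ I would obtain $Z\in\mathfrak{X}^1_\mu(M)$ with $\nrm{Z-X}_{C^1}<2\epsilon/3$ for which $\gamma$ is still periodic and its linear Poincar\'e return map has all eigenvalues on the unit circle; note that this return map preserves the induced volume on the normal bundle, hence has determinant one, so ``all exponents zero'' is here equivalent to ``all eigenvalues of modulus one.'' A final rotation of $C^1$-size less than $\epsilon/3$ inside a two-dimensional invariant subspace turns this into a genuinely elliptic return map, so there is a vector field within $C^1$-distance $\epsilon$ of $X$ having an elliptic periodic orbit. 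As $\epsilon$ was arbitrary, $X$ is a $C^1$-limit of vector fields with elliptic periodic orbits; this contradiction shows $\mu(\mathcal{Z})=0$, whence $\mu(\mathcal{D})=1$ and $N:=\mathcal{D}$ satisfies the conclusion.

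I expect the crux to be the perturbation step upgrading ``subexponential growth along a closed orbit'' to ``return map with all eigenvalues on the unit circle'': subexponential growth by itself only bounds the eigenvalue moduli by $e^{o(\tau)}$, not by $1$, so one genuinely needs the quantitative Bochi--Viana-type lemma that exploits the lack of a dominated splitting to rotate the Oseledets directions into one another and thereby collapse all the exponents to zero. This is precisely the analytic core already present in the proof of Theorem~\ref{T1} (see~\cite{B0,B1}), and in the diffeomorphism setting it is the mechanism behind~\cite{BFP}; by contrast the spectral-gap observation, the recurrence argument, the closing lemma, and the final elliptic rotation are comparatively routine.
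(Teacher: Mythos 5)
Your global strategy coincides with the paper's: take $N=\mathcal{D}$ and rule out $\mu(\mathcal{Z})>0$ by closing a recurrent orbit inside $\mathcal{Z}$ and producing, within $C^1$-distance $\epsilon$ of $X$, a vector field with an elliptic periodic orbit. The gap is in your central perturbative step. Having closed the orbit into a periodic orbit $\gamma$ of $Y$, you propose to run the Bochi--Ma\~n\'e--Viana rotation perturbations ``inside a tubular neighbourhood of $\gamma$'' so that $\gamma$ stays periodic and its Poincar\'e return map acquires all eigenvalues on the unit circle. But the machinery you invoke (the realizable linear flows of Definition~\ref{rlf}, Lemmas~\ref{rot1} and~\ref{sequences}, Proposition~\ref{exchange}) is measure-theoretic: it realizes prescribed linear behaviour only for points of a positive-measure subset $K$ of a small transversal, and it neither keeps a chosen orbit periodic nor controls the exact spectrum of its return map. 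Controlling the derivative along a fixed closed orbit requires a Franks-type lemma for divergence-free flows, and such a lemma needs a quantitative input, namely a bound on the change of the return map per unit time. Moreover, the hypothesis you feed into your step --- absence of domination along the $X$-orbit of $p$ --- is purely qualitative (for each $m$ the $m$-domination inequality merely fails at some time), and you never show that the closed orbit of $Y$ inherits non-domination at a definite scale relative to its very large period, which any Ma\~n\'e/Bochi-type eigenvalue-mixing argument along $\gamma$ would demand.

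The paper avoids all of this because the needed quantitative input is already available for free: since $x_0\in\mathcal{Z}$ has all Lyapunov exponents zero, for any $\delta>0$ one has $\frac{1}{|t|}\log\|P_X^t(x_0)\|<\delta$ for all sufficiently large $|t|$; closing by Pugh--Robinson \cite{PR} over a time beyond that threshold yields a periodic orbit with $\frac{1}{\pi_1}\log\|P_{X_1}^{\pi_1}(x_0)\|<2\delta$, and then a smoothing step (Zuppa \cite{Z}) followed by the perturbation lemma along periodic orbits \cite[Lemma 3.2]{BR2} turns it into an elliptic orbit at a $C^1$-cost controlled by $\delta$ (fixed in advance, independently of the period). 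In other words, the ``crux'' you identify --- that subexponential growth only bounds the eigenvalue moduli by $e^{o(\tau)}$ rather than by $1$ --- does not call for collapsing exponents via non-domination: smallness of the per-unit-time exponents of the closed orbit is exactly the hypothesis under which the Franks-type lemma makes the orbit elliptic by a small perturbation. Your appeal to the rotation machinery is therefore both insufficient as stated and unnecessary; replacing it by the closing-plus-\cite[Lemma 3.2]{BR2} step (the route the paper takes) closes the argument, while the rest of your outline (disjointness of $\mathcal{Z}$ and $\mathcal{D}$, recurrence, the final contradiction) is in line with the paper.
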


As a consequence of the previous result we obtain the following corollary.

\begin{corollary}\label{C1}  Let $X \in \mathcal{E}$ such that $Sing(X)=\emptyset$ and $X$ cannot be $C^1$-approximated by a vector field $Y$ having elliptic periodic orbits. Then for every $\epsilon>0$ there exist $m \in \mathbb{N}$ and a closed and invariant set $N_\epsilon \subset M$ such that $\mu(N_\epsilon)> 1-\epsilon$ and $N_\epsilon$ admits a $m$-dominated splitting for the linear Poincar\'e flow of $X$.
\end{corollary}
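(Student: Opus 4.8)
The plan is to deduce the uniform, finite-$m$ domination on a large-measure set from the pointwise domination supplied by Theorem~\ref{T3}, by combining a measurable-to-uniform argument (à la the uniformity lemma of Arnold--Cong, or Mañé's ``every compact invariant set with pointwise domination has uniform domination'' philosophy) with a compactness/regularity step. Since $X \in \mathcal{E}$ cannot be $C^1$-approximated by vector fields with elliptic periodic orbits, Theorem~\ref{T3} gives a full-measure invariant set $N \subset M$ such that every orbit in $N$ admits a dominated splitting for the linear Poincaré flow $P^t_X$. Because $Sing(X)=\emptyset$, the linear Poincaré flow is defined and continuous on all of $M$, so the normal bundle and the cocycle $P^t_X$ behave well; this is where the hypothesis $Sing(X)=\emptyset$ is used.

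First I would record that on $N$ the dominated splitting $N_p = E^1_p \oplus E^2_p$ varies measurably, and for each $p$ there is an integer $m(p)$ realizing the domination, i.e. $\|P^{m(p)}_X|_{E^1_p}\|\,\|P^{-m(p)}_X|_{E^2_{X^{m(p)}(p)}}\| \le \tfrac12$; by countable additivity, choosing $m$ large enough, the set $N^{(m)} \subset N$ of points whose orbit admits an $m$-dominated splitting has measure $\mu(N^{(m)}) > 1-\epsilon/2$. Next I would pass to a compact set: by regularity of $\mu$, pick a compact $K \subset N^{(m)}$ with $\mu(K) > 1 - \epsilon$; however $K$ need not be $X^t$-invariant, so I would replace it by the closure of its orbit, $N_\epsilon := \overline{\bigcup_{t\in\Rr} X^t(K)}$, which is closed and invariant. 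The key point is then that the $m$-domination property passes to this closure: the splitting extends continuously to $N_\epsilon$ and the inequality, being a non-strict closed condition on the continuous cocycle $P^t_X$, survives under limits — this is the standard fact that an $m$-dominated splitting over an invariant set automatically extends (uniquely and continuously) to its closure. One must also check $\mu(N_\epsilon) \ge \mu(K) > 1-\epsilon$, which is immediate since $K \subset N_\epsilon$.

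The main obstacle I anticipate is the passage from the \emph{measurable} pointwise domination on $N$ to a \emph{uniform} $m$ on a set of measure close to $1$: a priori the function $p \mapsto m(p)$ is only measurable and the bundles $E^1, E^2$ only measurable, so one needs either Lusin's theorem to get continuity of the splitting on a large compact set before extracting the uniform $m$, or a direct argument that the ``$m$-dominated'' sets are measurable and exhaust $N$. Once continuity of the invariant splitting is available on a compact positive-measure set, the extension to the orbit closure and the persistence of the domination inequality are routine closed-condition arguments; the care needed is only to ensure that the extension does not create new directions where domination fails, which is guaranteed by uniqueness of dominated splittings with fixed index $m$. This gives the stated $m \in \Nn$ and closed invariant $N_\epsilon$ with $\mu(N_\epsilon) > 1-\epsilon$ carrying an $m$-dominated splitting for the linear Poincaré flow, completing the proof. \blanksquare
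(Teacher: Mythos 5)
Your proof is correct and follows essentially the same route as the paper: exhaust the full-measure set from Theorem~\ref{T3} by the sets of orbits admitting an $m$-dominated splitting, fix a single $m$ for which this set has measure greater than $1-\epsilon$, and pass to a closed invariant set using the extension of dominated splittings to closures, with $Sing(X)=\emptyset$ guaranteeing the closure carries the splitting and is compact. Two minor remarks: your ``choosing $m$ large enough by countable additivity'' tacitly needs the higher-domination property (H) to make the family of $m$-dominated sets effectively nested (this is exactly the paper's appeal to elementary properties of dominated splittings), and the compact-subset/Lusin detour is unnecessary, since uniform $m$-domination already forces continuity of the splitting and property (E) extends it directly to the closure of the invariant $m$-dominated set.
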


\end{section}

We observe that if $M$ is a three-dimensional manifold then $N_\epsilon$ is a hyperbolic set.
We also notice that,  from the proof of this corollary, if we remove the hypothesis on the nonexistence of singularities then we obtain an invariant set $N_\epsilon$ with the same properties except that we can not assure its compacity. 

\begin{section}{Notation, definitions and basic results}

In this section we introduce some notation, fundamental definitions and basic results needed to prove our theorems.
 
\begin{subsection}{The setting}
Let $M$ be a $d$-dimensional compact, connected and boundaryless Riemaniann manifold endowed with a volume-form $\omega $. We call Lebesgue measure to the measure $\mu$ associated to $\omega$. As mentioned before we denote by $\mathfrak{X}^{1}_{\mu}(M)$ the set of all \emph{divergence-free} vector fields $X\colon M\rightarrow TM$ of class $C^1$, endowed  with the usual Whitney $C^{1}$-topology.. Given $X\in\mathfrak{X}^{1}_{\mu}(M)$ let $X^{t}$ be its infinitesimal generator, that is, $\frac{dX^{t}}{dt}|_{t=s}(p)=X(X^{s}(p))$. We are interested in the study of the tangent map $DX^{t}_{p}\colon T_{p}M\rightarrow T_{X^{t}}M$. Notice that $DX^{t}_{p}$ is a solution of the linear
variational equation $\dot{u}(t)=DX_{X^{t}(p)}\cdot u(t)$. Is is easy to see that $|\det(DX^{t})|=1$ for any $t \in \mathbb{R}$, that is the flow $X^t$ is volume-preserving.

Let $Sing(X):=\{x\in M\colon X(x)=\vec{0}\}$ denote the set of \emph{singularities} of $X$ and let $\mathcal{R}(X):=M\setminus Sing(X)$ denote the set of \emph{regular} points.

\end{subsection}

\begin{subsection}{Linear Poincar\'e flow}

Fix $X\in\mathfrak{X}^{1}_{\mu}(M)$, $p\in \mathcal{R}(X)$ and let $N_{p}\subset T_{p}M$ denote the \emph{normal fiber} at $X(p)$, that is, the subfiber spanned by the orthogonal complement of $X(p)$. We denote by $N\subset TM$ the \emph{normal bundle} which, of course, is only defined on $\mathcal{R}(X)$. Now, let $\mathcal{N}_{p}$ and $\mathcal{N}_{X^{t}(p)}$ be two $(d-1)$-dimensional manifolds contained in $M$ whose tangent spaces at $p$ and $X^{t}(p)$, respectively, are $N_{p}$ and $N_{X^{t}(p)}$. Let also $\mathcal{V}_{p}$ be a small neighborhood of $p$ in $\mathcal{N}_{p}$. If $\mathcal{V}_{p}$ can be taken small enough, then the usual \emph{Poincar\'{e} map} $\mathcal{P}_{X}^{t}(p):\mathcal{V}_{p}\subset\mathcal{N}_{p}\rightarrow\mathcal{N}_{X^{t}(p)}$ is well defined.

The \emph{linear Poincar\'{e} flow}  was formally introduced in~\cite{D} and it is the differential of the Poincar\'{e} map. To define it properly for each $t \in \mathbb{R}$ we consider the tangent map $DX^{t}:T_{R}M\longrightarrow T_{R}M$ which is defined by $DX^{t}(p,v)=(X^{t}(p),DX^{t}(p)\cdot v)$ and let $\Pi_{X^{t}(p)}$ be the canonical projection on
$N_{X^{t}(p)}$. The linear map $P^{t}_{X}(p):N_{p}\longrightarrow N_{X^{t}(p)}$ defined by $P^{t}_{X}(p)=\Pi_{X^{t}(p)}DX^{t}(p)$ is called the linear
Poincar\'{e} flow at $p$ associated to the vector field $X$. 

\end{subsection}

\begin{subsection}{Local coordinates}\label{lc}
Given a linear map $A$ we define its norm in the usual way, i.e., $$\sup_{v\not=\vec{0}}\frac{\|A\cdot v\|}{\|v\|}.$$ 

By Lemma 2 of~\cite{Mo}, given a volume form $\omega$ in $M$ there exists an atlas $\mathcal{A}=\{(\alpha_{i}, U_{i})_i\}$ of $M$, such that $(\alpha_{i})_{*}\omega=dx_{1}\wedge{dx_{2}}\wedge...\wedge{dx_{d}}$. The fact that $M$ is compact guarantees that $\mathcal{A}$ can be taken finite. 

The Riemannian norm \emph{a priori} fixed at $TM$ will not be used, instead we use the equivalent norm $\|v\|_{x}:=\|(D\alpha_{i})_{x}\cdot v\|$.

Let $p$ and $q$ be points in the same domain $U_i$ and $t$ be such that $X^{t}(p)$ and $X^{t}(q)$ are in the same domain $U_j$. Given linear maps $A^t(p) \colon N_{p}\rightarrow{N_{X^{t}(p)}}$ and $B^t(q)\colon N_{q}\rightarrow{N_{X^{t}(q)}}$ we define the distance $\|A^t(p)-B^t(q)\|$ in the following way. Let
\begin{itemize}
\item $a^t_{i,j}=(D\alpha_{j})_{X^{t}(p)}|_{N_ {X^t(p)}} \circ A^t(p) \circ (D\alpha_{i})_{p}^{-1}|_{D\alpha_i(N_p)}$,   and
\item $b^t_{i,j}=(D\alpha_{j})_{X^{t}(q)}|_{N_ {X^t(q)}} \circ B^t(q) \circ (D\alpha_{i})_{q}^{-1}|_{D\alpha_i(N_q)}$.
\end{itemize}
Now we define
\begin{equation}\label{metrics} 
\|A^t(p)-B^t(q)\|=\|a^t_{i,j}-b^t_{i,j}\|.
\end{equation}

\end{subsection}

\begin{subsection}{Flowboxes and the modified volume-preserving property}

Given the Poincar\'{e} map of a non-periodic point, $\mathcal{P}_{X}^{t}(p):\mathcal{V}_{p}\subseteq N_{p}\rightarrow{N_{X^{t}(p)}}$, where $\mathcal{V}_{p}$ is chosen sufficiently small and given $B\subseteq\mathcal{V}_{p}$ the self-disjoint set $$\mathcal{F}_{X}^{n}(p)(B):=\{\mathcal{P}_{X}^{t}(p)(q):q\in{B},t\in[0,n]\},$$ is called the time-$n$ length  \emph{flowbox} at $p$ associated to the vector field $X$.

Given
$v_{1},v_{2},...,v_{d-1}\in{N_{p}}$  we can define a pair of $(d-1)$-forms by $$\hat{\omega}_{p}(v_{1},v_{2},...,v_{d-1}):=\omega_{p}(X(p),v_{1},v_{2},...,v_{d-1}),$$ and $$\overline{\omega}_{p}(v_{1},v_{2},...,v_{d-1})=\omega_{p}(\|X(p)\|^{-1}X(p),v_{1},v_{2},...,v_{d-1}),$$ both induced by the volume form
$\omega$. It turns out that
 $(\mathcal{P}_{X}^{t}(p))^{*}\hat{\omega}_{p}=\hat{\omega}_{X^t(p)}$. The measure $\overline{\mu}$ 
induced by the $(d-1)$-form $\overline{\omega}$ is not necessarily
$\mathcal{P}_{X}^{t}$-invariant, however both the associated measures $\hat{\mu}$ and $\overline{\mu}$ are equivalent. We call $\overline{\mu}$ the Lebesgue measure at normal
sections or \emph{modified section volume}. In fact, given $v_{1},v_{2},...,v_{d-1}\in{N_{p}}$ we have that 
$$(\mathcal{P}_{X}^{t}(p))^{*}\overline{\omega}_{p}(v_{1},...,v_{d-1})=x(t)^{-1}\overline{\omega}_{X^t(p)}(P_X^t(p)\cdot v_{1},...,P_X^t(p)\cdot v_{d-1}),$$ where $x(t)=\|X(X^{t}(p))\|\|X(p)\|^{-1}$. Since the flow is volume-preserving we have $|\det P_{X}^{t}(p)|=x(t)^{-1}$.
Therefore it follows that we can give an explicit expression for the infinitesimal
distortion volume factor of the linear Poincar\'{e} flow, which is expressed by the following simple lemma (\cite{B0}).

\begin{lemma}\label{map}
Given $\nu>0$ and $T>0$, there exists $r>0$ such that for any
measurable set $K\subseteq{B(p,r)}\subseteq{\mathcal{N}_{p}}$ we have
$$\|X(p)\| |\overline{\mu}(K)-x(t).\overline{\mu}(\mathcal{P}_{X}^{t}(p)(K))|<\nu,\,\text{for all }\, t\in[0,T].$$

\end{lemma}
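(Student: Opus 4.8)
The plan is to reduce the claim to two facts already recorded above: that $\hat{\mu}$ is invariant under the Poincar\'e map, and that the densities relating $\hat{\mu}$ and $\overline{\mu}$ are governed by $\|X\|$, which is continuous. Fix the regular point $p$; since the flow preserves $\mathcal{R}(X)$, the arc $\gamma:=\{X^{t}(p):t\in[0,T]\}$ is a compact subset of $\mathcal{R}(X)$, so $c_{T}:=\tfrac12\inf_{t\in[0,T]}\|X(X^{t}(p))\|>0$ and $C_{X}:=\sup_{M}\|X\|<\infty$. For $r$ small enough the maps $\mathcal{P}_{X}^{t}(p)$ are simultaneously defined on $B(p,r)\subseteq\mathcal{V}_{p}$ for all $t\in[0,T]$ (a flowbox of length $T$), and the family $\{\mathcal{P}_{X}^{t}(p)\}_{t\in[0,T]}$ is equi-Lipschitz, say with constant $L_{T}$, so that $\mathcal{P}_{X}^{t}(p)(B(p,r))\subseteq B(X^{t}(p),L_{T}r)$.

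Next I would use that $\hat{\omega}_{q}(v_{1},\dots,v_{d-1})=\|X(q)\|\,\overline{\omega}_{q}(v_{1},\dots,v_{d-1})$, whence $d\hat{\mu}=\|X\|\,d\overline{\mu}$, and that $(\mathcal{P}_{X}^{t}(p))^{*}\hat{\omega}_{p}=\hat{\omega}_{X^{t}(p)}$ gives $\hat{\mu}(\mathcal{P}_{X}^{t}(p)(K))=\hat{\mu}(K)$ for every measurable $K\subseteq B(p,r)$. Since $x(t)=\|X(X^{t}(p))\|\,\|X(p)\|^{-1}$, the quantity to estimate equals
\[
\bigl|\,\|X(p)\|\,\overline{\mu}(K)-\|X(X^{t}(p))\|\,\overline{\mu}(\mathcal{P}_{X}^{t}(p)(K))\,\bigr|,
\]
and inserting $\hat{\mu}(K)=\hat{\mu}(\mathcal{P}_{X}^{t}(p)(K))$ and applying the triangle inequality bounds it by
\[
\Bigl|\int_{K}\bigl(\|X(p)\|-\|X(q)\|\bigr)\,d\overline{\mu}(q)\Bigr|+\Bigl|\int_{\mathcal{P}_{X}^{t}(p)(K)}\bigl(\|X(q')\|-\|X(X^{t}(p))\|\bigr)\,d\overline{\mu}(q')\Bigr|.
\]

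Then I would estimate the two integrals separately. Let $\rho$ be a modulus of continuity for $\|X\|$ on $M$ and put $m(r):=\overline{\mu}(B(p,r))<\infty$. The first integral is at most $\rho(r)\,m(r)$ because $K\subseteq B(p,r)$. For the second, $\mathcal{P}_{X}^{t}(p)(K)\subseteq B(X^{t}(p),L_{T}r)$, so it is at most $\rho(L_{T}r)\,\overline{\mu}(\mathcal{P}_{X}^{t}(p)(K))$; moreover, for $r$ small enough (uniformly in $t$, by uniform continuity of $X$) one has $\|X(q')\|\ge c_{T}$ on $B(X^{t}(p),L_{T}r)$, so $\hat{\mu}(\mathcal{P}_{X}^{t}(p)(K))\ge c_{T}\,\overline{\mu}(\mathcal{P}_{X}^{t}(p)(K))$, while $\hat{\mu}(\mathcal{P}_{X}^{t}(p)(K))=\hat{\mu}(K)\le C_{X}\,m(r)$, whence $\overline{\mu}(\mathcal{P}_{X}^{t}(p)(K))\le C_{X}c_{T}^{-1}m(r)$. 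Combining,
\[
\|X(p)\|\,\bigl|\overline{\mu}(K)-x(t)\,\overline{\mu}(\mathcal{P}_{X}^{t}(p)(K))\bigr|\le\rho(r)\,m(r)+\rho(L_{T}r)\,C_{X}c_{T}^{-1}\,m(r),
\]
and the right-hand side tends to $0$ as $r\to0$, independently of $t\in[0,T]$; choosing $r$ so small that it is $<\nu$ finishes the proof.

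The only genuinely delicate point is the uniformity in $t$: one must know that the Poincar\'e maps of all times $t\in[0,T]$ are defined on one common ball $B(p,r)$, with uniformly bounded Lipschitz constants, and that $\|X\|$ is bounded away from $0$ and from $\infty$ along $\gamma$. All three follow from the compactness of $[0,T]$ and of $M$ together with $p\in\mathcal{R}(X)$, so there is essentially no obstacle; the rest is the elementary estimate above, which is why the statement is a simple lemma.
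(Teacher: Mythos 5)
Your argument is correct and follows essentially the route the paper intends (it states the lemma as a direct consequence, citing \cite{B0}): you use the invariance of $\hat{\mu}$ under the Poincar\'e maps together with $d\hat{\mu}=\|X\|\,d\overline{\mu}$ and then control the error by uniform continuity of $\|X\|$ and compactness of the arc $\{X^{t}(p):t\in[0,T]\}$, which is exactly the mechanism behind the displayed distortion identity preceding the lemma. The uniformity issues you flag (common flowbox, equi-Lipschitz Poincar\'e maps, $\|X\|$ bounded away from zero along the arc) are handled as you say, so there is no gap.
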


\end{subsection}

\begin{subsection}{Multiplicative ergodic theorem for the linear Poincar\'{e} flow}\label{MET}

Let $$\mathbb{R}X(p):=\{v\in T_{p}M:v=\eta X(p), \eta\in\mathbb{R}\},$$ be the vector field direction at $p$. Recalling  that $DX^{t}_{p}(X(p))=X(X^{t}(p))$ we conclude that the vector field direction is $DX^{t}$-invariant. The existence of other $DX^{t}$-invariant fibers is guaranteed, at least for Lebesgue almost every point, by  a theorem due to  Oseledets (see~\cite{O}) that we re-write for the linear Poincar\'e flow. 

\begin{theorem}(Oseledets' Theorem for the linear Poincar\'{e} flow)\label{Oseledec}
Given $X\in{\mathfrak{X}^{1}_{\mu}(M)}$, then for $\mu$-a.e. $p\in{M}$
there exist
\begin{itemize}
 \item a $P_{X}^{t}$-invariant splitting of the fiber $N_{p}=N_{p}^{1}\oplus ... N_{p}^{k(p)}$ along the orbit of $p$ (\emph{Oseledets' splitting}) and
 \item real numbers $\hat{\lambda}_{1}(p)>...>\hat{\lambda}_{k(p)}(p)$ (\emph{Lyapunov exponents}), 
\end{itemize}
with $1\leq k(p)\leq d-1$, such that:

\begin{equation}\label{limit}
\underset{t\rightarrow{\pm{\infty}}}{\lim}\frac{1}{t}\log{\|P_{X}^{t}(p)\cdot n^{i}\|=\hat{\lambda}_{i}(p)}, 
\end{equation}
for any $n^{i}\in{N^{i}_{p}\setminus\{\vec{0}\}}$ and $i=1,...,k(p)$.  
\end{theorem}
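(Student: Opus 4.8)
Since the statement is a transcription of the classical multiplicative ergodic theorem, the plan is to deduce it from Oseledets' theorem \cite{O} applied not to $P^t_X$ directly but to the tangent cocycle $DX^t$ on $TM$, and then to transport the resulting structure to the normal bundle. First I would note that $DX^t\colon TM\to TM$ is a continuous linear cocycle over the $\mu$-preserving flow $X^t$ on the compact manifold $M$, that $\log^+\|DX^{\pm t}\|$ is bounded (because $X$ is $C^1$ and $M$ is compact) and that $|\det DX^t|\equiv 1$; hence the hypotheses of the continuous-time multiplicative ergodic theorem are met, and for $\mu$-a.e. $p$ one obtains a $DX^t$-invariant splitting $T_pM=E_p^1\oplus\cdots\oplus E_p^m$ together with exponents $\chi_1>\cdots>\chi_m$ realising the exponential rates along the orbit in both time directions.

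Next I would work over the invariant full-measure set $\mathcal{R}(X)$ --- on $Sing(X)$ the fibre $N_p$ is not defined, so the statement is to be read there as vacuous. On $\mathcal{R}(X)$ the flow direction $\mathbb{R}X(p)$ is a continuous $DX^t$-invariant line subbundle and, since $N_p$ is its orthogonal complement, $T_pM=\mathbb{R}X(p)\oplus N_p$. Because $\Pi_{X^t(p)}$ is precisely the projection onto $N_{X^t(p)}$ along $\mathbb{R}X(X^t(p))$, the natural identification $\iota_p\colon N_p\to T_pM/\mathbb{R}X(p)$, $v\mapsto v+\mathbb{R}X(p)$, is an isometry that conjugates $P^t_X$ to the quotient cocycle $\overline{DX^t}$ induced by $DX^t$ on $TM/\mathbb{R}X$. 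Now $\overline{DX^t}$ is again a measurable cocycle over $(\mathcal{R}(X),\mu,X^t)$ with $\log^+\|(\overline{DX^t})^{\pm 1}\|$ bounded, so a second application of \cite{O} yields its Oseledets splitting and exponents; transporting these through $\iota^{-1}$ produces the $P^t_X$-invariant decomposition $N_p=N_p^1\oplus\cdots\oplus N_p^{k(p)}$, the numbers $\hat\lambda_1(p)>\cdots>\hat\lambda_{k(p)}(p)$ and the limit \eqref{limit}; a dimension count gives $1\le k(p)\le d-1$, and the orbit-invariance of $k(p)$ and of the $\hat\lambda_i$ is part of the theorem.

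The work here is essentially bookkeeping, so the only point requiring genuine care --- the ``obstacle'' --- is making the reduction to $\mathcal{R}(X)$ and the identification of $P^t_X$ with the quotient cocycle precise, together with the verification that the quotient cocycle inherits the hypotheses of the multiplicative ergodic theorem over the (non-compact, but finite-measure) base $\mathcal{R}(X)$. I note that one may instead apply the theorem directly to $P^t_X$ over $(\mathcal{R}(X),\mu,X^t)$: there $\|\Pi\|\le 1$ gives $\log^+\|P^t_X\|\in L^1$ for free, while the bound $\|(P^t_X(p))^{-1}\|\le\|P^t_X(p)\|^{\,d-2}\,|\det P^t_X(p)|^{-1}$ together with the identity $|\det P^t_X(p)|=x(t)^{-1}=\|X(p)\|\,\|X(X^t(p))\|^{-1}$ recorded above reduces the integrability of the inverse cocycle to that of $\log\|X\|$ on $\mathcal{R}(X)$, which is the delicate quantity near singularities; the route through $DX^t$ is preferable precisely because it sidesteps this last issue.
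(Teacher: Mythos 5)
Your proposal is correct and follows essentially the route the paper itself takes: the paper offers no standalone proof, citing Oseledets \cite{O} (and \cite{JPS} for the linear differential systems version), and in the remark following the theorem it performs exactly your reduction --- apply the multiplicative ergodic theorem to $DX^{t}$ (integrable since $X$ is $C^1$ and $M$ compact) and transfer the splitting and exponents to the normal bundle, there via the projection $N^{i}_{p}=\Pi_p(E^i_p)$ together with the sub-exponential growth of the angle between the Oseledets spaces and the flow direction, which is equivalent to your isometric identification of $P_X^{t}$ with the quotient cocycle on $T_pM/\mathbb{R}X(p)$ over $\mathcal{R}(X)$. Your closing remark about a direct application of the ergodic theorem to $P_X^{t}$ --- where the identity $|\det P_X^{t}(p)|=\|X(p)\|\,\|X(X^t(p))\|^{-1}$ makes integrability of $\log\|X\|$ near $Sing(X)$ the delicate point --- correctly identifies why the detour through $DX^{t}$ is the preferable one.
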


Let $\mathcal{O}(X)$ denote the set of $\mu$-generic points given by this theorem. We note that if we do not take into account the multiplicities of the $\hat{\lambda}_{i}(p)$, then we have $d-1$ Lyapunov exponents: $\lambda_{1}(p)\geq \lambda_{2}(p)\geq...\geq\lambda_{d-1}(p)$.

In ~\cite{JPS} is presented a  proof of Theorem~\ref{Oseledec} in the context of linear differential systems.

\begin{remark}
Actually, the Oseledets Theorem gives us a splitting of $T_{p}M=E_{p}^{1}\oplus ... \oplus E_{p}^{k(p)}\oplus \mathbb{R}X(p)$ and Lyapunov exponents associated to these $DX^t$-invariant directions for $\mu$-a.e. point $p$.  Due to the fact that for any of these subspaces $E^{i}_{p}\subset T_{p}M$, the angle between this space and  $\mathbb{R}X(p)$ along the orbit has sub-exponential growth, that is
\begin{equation}\label{angle}
\lim_{t\rightarrow{\pm{\infty}}}\frac{1}{t}\log\sin(\measuredangle(E^{i}_{X^{t}(p)},\mathbb{R}X(X^t(p))))=0.
\end{equation}
we conclude that the Lyapunov exponent $\hat{\lambda}_{i}(p)$ for $DX^t$ with associated subspace $E^i_p$ is also a Lyapunov exponent  for $P_{X}^t$ associated to subspace $N^{i}_{p}=\Pi_p(E^i_p)$, $i \in\{1,...,k(p)\}$, where  $\Pi_p$ is the projection into $N_{p}$.

\end{remark}

\end{subsection}

\begin{subsection}{Multilinear algebra for the linear Poincar\'{e} flow} The $k^{th}$ exterior product of $N$, denoted by $\wedge^{k}(N)$, is a $\binom{d-1}{k}$-dimensional vector space. Let $\{e_{j}\}_{j\in J}$ be an orthonormal basis of $N$, then the family of exterior products $e_{j_{1}}\wedge e_{j_{2}}\wedge...\wedge e_{j_{k}}$ for $j_{1}<...<j_{k}$ with $j_{\alpha}\in J$ forms an orthonormal basis of $\wedge^{k}(N)$. Given $P_{X}^{t}(p)\colon N_{p}\rightarrow N_{X^{t}(p)}$ we define 
$$\begin{array}{cccc}
\wedge^{k}(P_{X}^t(p))\colon & \wedge^{k}(N_{p}) & \longrightarrow  & \wedge^{k}(N_{X^{t}(p)}) \\
& \psi_{1}\wedge...\wedge\psi_{k} & \longrightarrow  & P_{X}^{t}(p)\cdot\psi_{1}\wedge...\wedge P_{X}^{t}(p)\cdot\psi_{k}.
\end{array}$$

This formalism of multilinear algebra reveals to be the adequate to prove our results. This is because we can recover the spectrum and the splitting information of the dynamics of $\wedge^{k}(P_{X}^t(p))$ from the one obtained by applying Oseledets' Theorem to $P_{X}^t(p)$. This is precisely the meaning of the next theorem (\cite[Theorem 5.3.1]{A}).


\begin{theorem}(Oseledets' Theorem for the exterior power of the linear Poincar\'{e} flow)\label{arnauld}
The Lyapunov exponents $\lambda_{i}^{\wedge k}(p)$ for $i\in\{1,...,(_{k}^{d-1})\}$ (repeated 
 with multiplicity) of the $k^{th}$ exterior product operator $\wedge^{k}(P_{X}^t(p))$ are the numbers of the form:
$$\sum_{j=1}^{k}\lambda_{i_{j}}(p), \text { where }1\leq i_{1}< ...<i_{k}\leq d-1.$$ 
This nondecreasing sequence starts with 
\begin{itemize}
 \item $\lambda_{1}^{\wedge k}(p)=\lambda_{1}(p)+\lambda_{2}(p)+...+\lambda_{k}(p)$ and ends with \item $\lambda_{q(k)}^{\wedge k}(p)=\lambda_{d-k}(p)+\lambda_{d+1-k}(p)+...+\lambda_{d-1}(p)$.
\end{itemize}
Moreover, the splitting of $\wedge ^{k}(N_{p}(i))$ for $0\leq i\leq q(k)$, associated to  $\wedge^{k}(P_{X}^{t}(p))$ and to $\lambda_{i}^{\wedge k}(p)$,  can be obtained from the splitting $N_{p}(i)$ of $P_{X}^{t}(p)$ as follows. Take an Oseledets basis $\{e_{1}(p),...,e_{d-1}(p)\}$ of  $N_{p}$ such that $e_{i}(p)\in E_{p}^{\ell}$ for $$\dim(E_{p}^{1})+...+\dim(E_{p}^{\ell-1})<i\leq \dim(E_{p}^{1})+...+\dim(E_{p}^{\ell}).$$ Then the Oseledets space is generated by the $k$-vectors:
$$e_{i_{1}}\wedge ...\wedge e_{i_{k}}\text { such that } 1\leq i_{1}<...<i_{k}\leq d-1 \text { and } \sum_{j=1}^{k}\lambda_{i_{j}}(p)=\lambda_{i}^{\wedge k}(p).$$
\end{theorem}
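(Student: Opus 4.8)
The plan is to deduce the statement from the abstract multiplicative ergodic theorem applied to the cocycle $t\mapsto\wedge^{k}(P_{X}^{t}(p))$, after reinterpreting the relevant operator norms through exterior algebra. First I would check the integrability hypothesis: for any invertible linear map $A$ one has $\|\wedge^{k}A\|\le\|A\|^{k}$ and $\|(\wedge^{k}A)^{-1}\|\le\|A^{-1}\|^{k}$, so $\log^{+}\|\wedge^{k}(P_{X}^{t})\|$ and $\log^{+}\|\wedge^{k}(P_{X}^{-t})\|$ are dominated by $k$ times the corresponding functions for $P_{X}^{t}$, which are $\mu$-integrable since Theorem~\ref{Oseledec} applies. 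Hence Oseledets' theorem furnishes, for $\mu$-a.e.\ $p$, a $\wedge^{k}(P_{X}^{t})$-invariant splitting of $\wedge^{k}(N_{p})$ together with Lyapunov exponents $\lambda_{i}^{\wedge k}(p)$, and the whole problem becomes that of identifying these exponents (with multiplicities) and this splitting.

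Second, at an Oseledets-generic point $p$ I would fix the splitting of $N_{p}$ into the Oseledets blocks $E_{p}^{1},\dots,E_{p}^{k(p)}$ appearing in the statement and pick an orthonormal basis $\{e_{1}(p),\dots,e_{d-1}(p)\}$ of $N_{p}$ \emph{adapted} to it, i.e.\ with $e_{i}(p)\in E_{p}^{\ell}$ for the index $\ell$ prescribed by $\dim E_{p}^{1}+\cdots+\dim E_{p}^{\ell-1}<i\le\dim E_{p}^{1}+\cdots+\dim E_{p}^{\ell}$, so that $\tfrac1t\log\|P_{X}^{t}(p)e_{i}(p)\|\to\lambda_{i}(p)$ as $t\to\pm\infty$. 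The $k$-vectors $e_{i_{1}}(p)\wedge\cdots\wedge e_{i_{k}}(p)$ with $1\le i_{1}<\cdots<i_{k}\le d-1$ then form an orthonormal basis of $\wedge^{k}(N_{p})$ and are carried by $\wedge^{k}(P_{X}^{t}(p))$ to $P_{X}^{t}(p)e_{i_{1}}(p)\wedge\cdots\wedge P_{X}^{t}(p)e_{i_{k}}(p)$. The crux is to prove
$$\lim_{t\to\pm\infty}\frac1t\log\left\|P_{X}^{t}(p)e_{i_{1}}(p)\wedge\cdots\wedge P_{X}^{t}(p)e_{i_{k}}(p)\right\|=\sum_{j=1}^{k}\lambda_{i_{j}}(p).$$

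To handle this limit I would write $\|v_{1}\wedge\cdots\wedge v_{k}\|=\left(\prod_{j=1}^{k}\|v_{j}\|\right)\Delta$, where $\Delta\in[0,1]$ is the volume of the parallelepiped spanned by the normalized $v_{j}$ (a Gram factor). With $v_{j}=P_{X}^{t}(p)e_{i_{j}}(p)$ the upper estimate is immediate from $\Delta\le1$ and the choice of basis, so only the reverse estimate remains, and for it it suffices to show $\tfrac1t\log\Delta(t)\to0$ along the orbit. This I would establish by grouping the indices $i_{1},\dots,i_{k}$ according to the Oseledets block they lie in: the contribution of indices in \emph{distinct} blocks is controlled because along a.e.\ orbit the angle between distinct Oseledets subspaces decays subexponentially (the tempered-ness of the splitting, of the same nature as~(\ref{angle})), while the contribution of a group of indices sitting in a \emph{single} block $E_{p}^{\ell}$ of dimension $m$ is controlled because all Lyapunov exponents of the restricted cocycle $P_{X}^{t}(p)|_{E_{p}^{\ell}}$ equal $\hat\lambda_{\ell}(p)$, which—reading off the singular values of $\wedge^{j}(P_{X}^{t}(p)|_{E_{p}^{\ell}})$—forces every nonzero $j$-vector supported in that block to grow at the exact rate $j\,\hat\lambda_{\ell}(p)$ in both time directions. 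Multiplying these subexponential contributions gives $\tfrac1t\log\Delta(t)\to0$ and hence the displayed limit.

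Finally I would assemble the pieces: grouping the basis $k$-vectors $e_{i_{1}}(p)\wedge\cdots\wedge e_{i_{k}}(p)$ by the value of $\sum_{j}\lambda_{i_{j}}(p)$ yields a $\wedge^{k}(P_{X}^{t})$-invariant decomposition of $\wedge^{k}(N_{p})$ whose growth rates are exactly the numbers $\sum_{j=1}^{k}\lambda_{i_{j}}(p)$, each occurring with multiplicity equal to the number of multi-indices realizing it; since $\lambda_{1}(p)\ge\cdots\ge\lambda_{d-1}(p)$, the extreme values on this list are $\lambda_{1}(p)+\cdots+\lambda_{k}(p)$ and $\lambda_{d-k}(p)+\cdots+\lambda_{d-1}(p)$, which are the endpoints named in the statement. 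By uniqueness of the Oseledets decomposition of the integrable cocycle $\wedge^{k}(P_{X}^{t})$, this decomposition must coincide with the one produced in the first step, and this simultaneously identifies the $\lambda_{i}^{\wedge k}(p)$ and shows that the Oseledets spaces of $\wedge^{k}(P_{X}^{t})$ are spanned by the prescribed wedges of Oseledets vectors. I expect the only genuinely delicate point to be the within-block part of the norm estimate—the dynamics inside an Oseledets block of multiplicity larger than one need not be conformal—so one must argue that equality of all of that block's exponents nevertheless pins down the volume growth of every sub-$k$-vector, invoking only the soft singular-value/subadditivity description of exterior-power norms rather than the statement being proved.
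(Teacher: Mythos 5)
Your argument is essentially correct, but it is worth noting that the paper does not prove this statement at all: it is invoked as a quoted result, namely Theorem 5.3.1 of Arnold's book on random dynamical systems (reference \cite{A}), so there is no internal proof to compare against. What you supply is a self-contained derivation from the basic Oseledets theorem for the linear Poincar\'e flow (Theorem~\ref{Oseledec}): integrability of the exterior-power cocycle via $\|\wedge^{k}A\|\leq\|A\|^{k}$ and $\|(\wedge^{k}A)^{-1}\|\leq\|A^{-1}\|^{k}$, the growth-rate computation for wedges of an adapted Oseledets basis, and identification of the resulting invariant decomposition with the Oseledets decomposition of $\wedge^{k}(P_{X}^{t})$ by uniqueness. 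The two delicate points you isolate are exactly the right ones and both close in the standard way: the inter-block Gram factor decays subexponentially because the angles between distinct Oseledets subspaces along a.e.\ orbit are tempered (the same phenomenon as in~(\ref{angle})), and the within-block factor is controlled because, for the two-sided theorem, $\frac{1}{t}\log\|P_{X}^{t}(p)|_{E_{p}^{\ell}}\|$ and $\frac{1}{t}\log\mathfrak{m}(P_{X}^{t}(p)|_{E_{p}^{\ell}})$ both converge to $\hat{\lambda}_{\ell}(p)$, and the smallest singular value of a $j$-th exterior power is the product of the $j$ smallest singular values, which pins the growth of every nonzero $j$-vector in $\wedge^{j}E_{p}^{\ell}$ at exactly $j\hat{\lambda}_{\ell}(p)$ even when the block dynamics is far from conformal. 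So your route buys a proof from first principles (at the cost of rewriting a chunk of Arnold's Chapter 5), whereas the paper simply outsources the result; if you wanted to shorten your write-up you could, like the authors, cite \cite[Theorem 5.3.1]{A} or the corresponding multiplicative-ergodic-theorem literature for linear differential systems (\cite{JPS}) and keep only the identification of exponents and Oseledets spaces, which is the part actually used in Subsection~\ref{EF}.
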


\end{subsection}

\begin{subsection}{Dominated splitting (or projective hyperbolicity) for the linear Poincar\'{e} flow}

Let $\mathfrak{m}(A)=\|A^{-1}\|^{-1}=\inf_{v\not=\vec{0}}\|A\cdot v\|$ denotes the co-norm of a linear map $A$. 

Take a $X^t$-invariant set $\Lambda$ and fix $m\in\mathbb{N}$. A nontrivial $P_{X}^{t}$-invariant and continuous splitting $N_{\Lambda}=U_{\Lambda}\oplus S_{\Lambda}$ is said to have an $m$\emph{-dominated splitting} for the linear Poincar\'{e} flow of $X$ over $\Lambda$ if the following inequality holds for every $p\in\Lambda$:
\begin{equation}\label{ds}
 \frac{\|P_{X}^{m}(p)|_{S_{p}}\|}{\mathfrak{m}(P_{X}^{m}(p)|_{U_{p}})}\leq \frac{1}{2}.
\end{equation}
The \emph{index} of the splitting is the dimension of the bundle $U_\Lambda$. The dominated splitting structure is a ``weak'' form of uniform hyperbolicity, in fact behaves like an uniform hyperbolic structure in the projective space $RP^{d-2}$. We enumerate some basic properties of an $m$-dominated splitting on a set $\Lambda$, for the detailed proofs of these properties see~\cite{BDV} Section B.1.
\begin{enumerate}
 \item [(H)] (Higher Domination) There exists $m_0>m$ such that, for all $\ell \geq m_0$, $U_{\Lambda}\oplus S_{\Lambda}$ is an $\ell$-dominated splitting.
 \item [(E)] (Extension) It can always be extended to an $m$-dominated splitting over  $\overline{\Lambda} \setminus Sing(X)$.
 \item [(T)] (Transversality) The angles between the $U_p$ and $S_p$ are uniformly bounded away from zero, for $p \in \overline{\Lambda}$.
 \item [(U)] (Uniqueness) For a fixed index the dominated splitting is unique.
\end{enumerate}

Fix $X\in\mathfrak{X}^{1}_{\mu}(M)$, $k \in\{1,...,d-2\}$ and $m\in\mathbb{N}$. The subset of $M$ formed by the points $p\in M$ such that there exists an $m$-dominated splitting of index $k$ along the orbit of $p$ is denoted by $\Lambda_{k}(X,m)$. The set $\Gamma_{k}(X,m)= M\setminus \overline{\Lambda_{k}(X,m)}$ is open and each  element of it has an iterate without $m$-dominated splitting of index $k$ or else it is a singularity of $X$. From the generic point of view the set of (hyperbolic) singularities is a measure zero set (\cite{Rb}).

Along this paper we will be mainly interested in dominated splitting related to the natural $P_{X}^t$-invariant splitting given by Oseledets' Theorem (obtained in Subsection~\ref{MET}) and over the orbit of some $p\in\mathcal{O}(X)$, namely,
$$
 U_{p}^j=N^{1}_{p}\oplus ... \oplus N_{p}^{j} \text{ and } S_{p}^j=N^{j+1}_{p}\oplus ... \oplus N_{p}^{\ell},
$$
where $\ell \leq d-1$, $N^{i}_{p}\subset N_{p}$ for $i=\{1,...,\ell\}$  and $j$  is some fixed index of the splitting, $j\in\{1,...,d-2\}$. 

Now, we define some sets which will be used in the sequel:
\begin{itemize}
 \item $\Gamma_{k}^{\sharp}(X,m):=\{p\in \Gamma_{k}(X,m)\cap \mathcal{O}(X)\colon \lambda_{k}(X,p)>\lambda_{k+1}(X,p)\}$;
 \item $\Gamma_{k}^{*}(X,m):=\Gamma_{k}^{\sharp}(X,m)\setminus Per(X)$, where $Per(X)$ denotes the periodic points of $X^t$, for all $t$;
 \item $\Gamma_{k}(X,\infty):=\underset{m\in\mathbb{N}}{\bigcap}{\Gamma_{k}(X,m)}$ and
 \item $\Gamma_{k}^{\sharp}(X,\infty):=\underset{m\in\mathbb{N}}{\bigcap}{\Gamma_{k}^{\sharp}(X,m)}$.
\end{itemize}

The next lemma (Lemma 4.1 of~\cite{BV}) allows us to focus our attention only on the non-periodic points.

\begin{lemma}
Using the same notation as above, for every $\delta>0$, there exists $m_{0}\in\mathbb{N}$ such that for all $m\geq m_{0}$ we have that $$\mu(\Gamma_{k}^{\sharp}(X,m)\setminus \Gamma_{k}^{*}(X,m))<\delta.$$ 
\end{lemma}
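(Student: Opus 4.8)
The plan is to show that the set of non-periodic generic points whose Oseledets splitting of index $k$ fails to be $m$-dominated but which, thanks to being non-periodic, ``look periodic enough'' for this domination to be detected only along a negligible portion of the orbit, has measure controlled by $\delta$. The key observation is that $\Gamma_{k}^{\sharp}(X,m)\setminus\Gamma_{k}^{*}(X,m)=\Gamma_{k}^{\sharp}(X,m)\cap \Per(X)$, so we must bound the measure of generic periodic points in $\Gamma_{k}(X,m)$ whose $k$-th and $(k{+}1)$-th Lyapunov exponents are distinct.

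First I would recall that a periodic orbit $\gamma$ of period $T$ lies in $\mathcal{O}(X)$ automatically, and its Oseledets splitting is just the (generalized) eigenspace decomposition of the return map of the linear Poincar\'e flow $P_{X}^{T}$. If $\lambda_{k}(X,p)>\lambda_{k+1}(X,p)$ on $\gamma$, then the subbundle $U^{k}\oplus S^{k}$ given by this spectral gap \emph{is} automatically $\ell$-dominated along $\gamma$ for all sufficiently large $\ell$ — this is the elementary fact that a hyperbolic-type gap in the eigenvalues of a single linear return map upgrades to domination after a finite number $\ell(\gamma)$ of iterates, where $\ell(\gamma)$ depends only on the spectral gap, the period $T$, and the conorm/norm bounds of $P_{X}^{t}|_{[0,T]}$ along $\gamma$. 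Consequently, if $p\in\gamma$ and $\gamma$ has small enough period and controlled return map, then $p\notin\Gamma_{k}(X,m)$ for $m$ large; so the only periodic points of $\Gamma_{k}^{\sharp}(X,m)$ that survive as $m\to\infty$ are those lying on orbits of large period or with a spectral gap that degenerates.

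Next I would make this quantitative and integrable. Decompose $\Gamma_{k}^{\sharp}(X,m)\cap\Per(X)$ according to the period: for each $n\in\mathbb{N}$ let $P_{n}$ be the set of periodic points of period at most $n$ with a spectral gap at index $k$ bounded below by $1/n$. Each such $P_{n}$ is a finite union of periodic orbits (by the closing-type discreteness, or simply because we may also stratify by how close the return map eigenvalues are to being resonant), hence has zero $\mu$-measure; more robustly, one invokes that for a generic $X$ — and here we are within $\mathcal{E}$, or we may argue for all $X$ since $\Per(X)$ is always $\mu$-null by a Fubini/flow-box argument away from singularities together with the Poincar\'e recurrence structure — the whole set $\Per(X)$ has $\mu$-measure zero. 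Then $\mu(\Gamma_{k}^{\sharp}(X,m)\setminus\Gamma_{k}^{*}(X,m))\le\mu(\Per(X))=0<\delta$ trivially, and in fact one may take $m_{0}=1$. If instead one wants to avoid the genericity/null-set input and prove the lemma for every $X$, the honest argument is the exhaustion one: write $\Gamma_{k}^{\sharp}(X,m)\cap\Per(X)=\bigcup_{n}\bigl(\Gamma_{k}^{\sharp}(X,m)\cap\Per(X)\cap K_{n}\bigr)$ where $K_{n}$ restricts period and spectral gap as above, note this union is increasing in $m^{-1}$ and that for each fixed $n$ the set $\Gamma_{k}^{\sharp}(X,m)\cap\Per(X)\cap K_{n}$ becomes empty once $m\ge m_{0}(n)$ by the domination-upgrade for return maps, and then use that the tail $\Per(X)\setminus\bigcup_{n}K_{n}$ (infinite period in the limit, or degenerate gap) meets $\Gamma_{k}^{\sharp}$ in a set of measure $<\delta$ by continuity of measure.

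The step I expect to be the main obstacle is the uniform domination-upgrade estimate: quantifying how large $m$ must be so that a spectral gap of size $\ge 1/n$ in the return map $P_{X}^{T}(p)$ of a periodic orbit of period $\le n$ forces the $m$-domination inequality \eqref{ds} at \emph{every} point of the orbit, not just at $p$, uniformly over the compact family of such orbits — this requires controlling the fluctuation of $\|P_{X}^{t}|_{S}\|/\mathfrak{m}(P_{X}^{t}|_{U})$ over one period in terms of the compactness of $M$ and the $C^{1}$ size of $X$, which is where the local-coordinate norms of Subsection~\ref{lc} and the distortion bound of Lemma~\ref{map} enter. Everything else (the set-theoretic decomposition, the continuity of $\mu$, and the reduction $\Gamma_{k}^{\sharp}\setminus\Gamma_{k}^{*}=\Gamma_{k}^{\sharp}\cap\Per$) is routine.
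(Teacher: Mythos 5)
Your reduction to bounding $\mu(\Gamma_{k}^{\sharp}(X,m)\cap Per(X))$ and your key observation (a spectral gap $\lambda_k>\lambda_{k+1}$ along a periodic orbit forces $m$-domination along that orbit for all large $m$) are exactly the right starting points, but both routes you then offer have genuine problems. The fallback ``$\mu(Per(X))=0$, so take $m_0=1$'' is not available: the lemma is stated and used for an arbitrary $X\in\mathfrak{X}^{1}_{\mu}(M)$ (it feeds into the local and global lemmas before any residual set is invoked), and $Per(X)$ can have full measure --- e.g.\ a constant divergence-free vector field of rational direction on $\mathbb{T}^3$ has every point periodic; likewise the claim that the periodic points of period at most $n$ form finitely many orbits is false (continua of periodic orbits occur). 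Your ``honest'' exhaustion argument is closer, but it rests entirely on the uniform domination-upgrade over each stratum $K_n$ (emptiness of $\Gamma_k^{\sharp}(X,m)\cap K_n$ for all $m\geq m_0(n)$, at every point of every such orbit simultaneously), which is precisely the step you defer as ``the main obstacle'' and do not prove; uniformity over a non-compact family of orbits of bounded period, possibly accumulating on singularities where the conorm of $P_X^{t}$ degenerates, is delicate.

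The point is that no uniformity is needed, and this is how the quoted result (Lemma 4.1 of Bochi--Viana, which the paper cites rather than reproves) goes. For each $x\in Per(X)\cap\mathcal{O}(X)$ with $\lambda_k(X,x)>\lambda_{k+1}(X,x)$, the orbit of $x$ is compact and avoids $Sing(X)$, so the ratio in (\ref{ds}) for the eigen-splitting of the return map decays exponentially, uniformly along that single orbit; hence there is $L(x)\in\mathbb{N}$ with $x\in\Lambda_k(X,m)$, and therefore $x\notin\Gamma_k(X,m)$, for every $m\geq L(x)$. Consequently
$$\Gamma_{k}^{\sharp}(X,m)\setminus\Gamma_{k}^{*}(X,m)\ \subset\ C_m:=\{x\in Per(X)\cap\mathcal{O}(X):\ \lambda_k(X,x)>\lambda_{k+1}(X,x),\ L(x)>m\},$$
and the sets $C_m$ decrease with empty intersection, so $\mu(C_m)\to 0$ by continuity of the (finite) measure from above; choosing $m_0$ with $\mu(C_{m_0})<\delta$ finishes the proof. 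Your stratification by period and gap size, and all the quantitative control of return maps near resonance or near singularities, can simply be discarded: the pointwise ``eventually dominated'' statement plus monotone convergence of measures is the whole argument.
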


We consider a measurable function $\rho_{X,m}\colon \Gamma_{k}^{*}(X,m) \longrightarrow  \mathbb{R}$ defined by $\rho_{X,m}(p)=\frac{\|P_{X}^{m}(p)|_{S_{p}}\|}{\mathfrak{m}(P_{X}^{m}(p)|_{U_{p}})}$. It is clear that if $p\in\Gamma_{k}^{*}(X,m)$, then there exists $t \in \mathbb{R}$ satisfying the inequality $\rho_{X,m}(X^t(p))>\frac{1}{2}$. We define the set $\Delta_{k}^{*}(X,m)$ by those points in $\Gamma_{k}^{*}(X,m)$ such that $\rho_{X,m}(p)>\frac{1}{2}$. Of course that $\Gamma_{k}^{*}(X,m)$ is the superset of $\Delta_{k}^{*}(X,m)$ saturated by the flow, i.e., $\Gamma_{k}^{*}(X,m)=\bigcup_{t\in\mathbb{R}}X^{t}(\Delta_{k}^{*}(X,m))$. 

In ~\cite{B1}  (Lemma 2.2) is proved the following result relating the measures of these two sets.
\begin{lemma}\label{invariance}
Given $\Delta_{k}^{*}(X,m)$ and $\Gamma_{k}^{*}(X,m)$ as above, if $\mu(\Gamma_{k}^{*}(X,m))>0$, then $\mu(\Delta_{k}^{*}(X,m))>0$.
\end{lemma}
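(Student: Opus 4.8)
The plan is to prove the contrapositive: assuming $\mu(\Delta_{k}^{*}(X,m))=0$, I will deduce $\mu(\Gamma_{k}^{*}(X,m))=0$. Recall that $\Gamma_{k}^{*}(X,m)=\bigcup_{t\in\mathbb{R}}X^{t}(\Delta_{k}^{*}(X,m))$ is the $X^t$--saturation of $\Delta_{k}^{*}(X,m)$, and that the flow--saturation of a $\mu$--null set need not be $\mu$--null in general (a transversal circle of an irrational flow on $\mathbb{T}^2$ saturates to the whole torus). So the argument has to use the special structure of $\Delta_{k}^{*}(X,m)$: it is ``thick in the flow direction''.

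First I would record two facts about a point $p\in\Gamma_{k}^{*}(X,m)$. Since $\Gamma_{k}^{*}(X,m)$ is $X^t$--invariant, for such $p$ one has $X^t(p)\in\Delta_{k}^{*}(X,m)$ exactly when $\rho_{X,m}(X^t(p))>\frac{1}{2}$; denote by $T_p\subset\mathbb{R}$ this set of times. I claim $T_p$ is open and nonempty. Openness: along the orbit of $p\in\mathcal{O}(X)$ the Oseledets subbundles are transported by the cocycle, $U_{X^t(p)}=P^t_X(p)\,U_p$ and $S_{X^t(p)}=P^t_X(p)\,S_p$, while the cocycle identity gives $P^m_X(X^t(p))=P^{t+m}_X(p)\circ\big(P^t_X(p)\big)^{-1}$; since $s\mapsto P^s_X(p)$ is continuous and $U_p,S_p$ are fixed subspaces of positive dimension (here $\lambda_{k}(X,p)>\lambda_{k+1}(X,p)$ is used), the map $t\mapsto\rho_{X,m}(X^t(p))$ is continuous --- its denominator $\mathfrak{m}\big(P^m_X(X^t(p))|_{U_{X^t(p)}}\big)$ is positive and continuous in $t$ because $P^m_X$ is a fibrewise isomorphism. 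Hence $T_p=\{t:\rho_{X,m}(X^t(p))>\frac{1}{2}\}$ is open. Nonemptiness is the remark preceding the statement: were $\rho_{X,m}(X^t(p))\le\frac{1}{2}$ for every $t$, the Oseledets splitting would be an $m$--dominated splitting of index $k$ over the whole orbit of $p$, contradicting $p\in\Gamma_k(X,m)$. A nonempty open subset of $\mathbb{R}$ has positive Lebesgue measure, so $\mathrm{Leb}(T_p)>0$ for every $p\in\Gamma_{k}^{*}(X,m)$.

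The second step is a Tonelli argument. Put $W=\{(t,p)\in\mathbb{R}\times M:\ X^t(p)\in\Delta_{k}^{*}(X,m)\}$, a Borel set because $(t,p)\mapsto X^t(p)$ is continuous and $\Delta_{k}^{*}(X,m)$ is Borel. Integrating the indicator of $W$ against the product $\mathrm{Leb}\times\mu$ in the order ``$p$ first'', and using that every $X^{-t}$ preserves $\mu$, gives $(\mathrm{Leb}\times\mu)(W)=\int_{\mathbb{R}}\mu\big(X^{-t}(\Delta_{k}^{*}(X,m))\big)\,dt=\int_{\mathbb{R}}\mu(\Delta_{k}^{*}(X,m))\,dt=0$. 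Integrating in the order ``$t$ first'' gives $(\mathrm{Leb}\times\mu)(W)=\int_M \mathrm{Leb}(T_p)\,d\mu(p)$. Hence $\mathrm{Leb}(T_p)=0$ for $\mu$--a.e.\ $p\in M$. Comparing with the previous paragraph, where $\mathrm{Leb}(T_p)>0$ for \emph{every} $p\in\Gamma_{k}^{*}(X,m)$, we conclude $\mu(\Gamma_{k}^{*}(X,m))=0$, which is the desired contrapositive.

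The main obstacle is the continuity of $t\mapsto\rho_{X,m}(X^t(p))$ along a single orbit: globally the Oseledets bundles are only measurable, so one must observe that on one orbit they are carried by the continuous Poincar\'e cocycle $t\mapsto P^t_X(p)$ and no measurability issue enters; this is the point I would state with care. The remaining bookkeeping --- that $\Gamma_{k}^{*}(X,m)$ and $\Delta_{k}^{*}(X,m)$ are Borel --- follows from Borel measurability of the Oseledets data and of $Per(X)$ and the sets $\Lambda_k(X,m)$. An alternative, closer to the flowbox machinery of the paper, would cover a full--measure part of $M\setminus Sing(X)$ by finitely many flowboxes and combine the distortion estimate of Lemma~\ref{map} with the product structure of $\mu$ in flowbox coordinates (via the $(d-1)$--form $\hat\omega$); but the Tonelli argument above is shorter and avoids singularities altogether.
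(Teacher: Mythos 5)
Your proposal is correct, and it is worth noting that the paper itself does not prove this lemma at all: it is quoted from \cite{B1} (Lemma 2.2), so your argument supplies a proof the present text only references. Your route is the natural one and all the delicate points are handled: you correctly identify that the real issue is that a flow-saturation of a null set need not be null, and you circumvent it by showing that the visit-time set $T_p=\{t:\rho_{X,m}(X^t(p))>\tfrac12\}$ has positive Lebesgue measure for \emph{every} $p\in\Gamma_k^*(X,m)$ --- openness coming from continuity of $t\mapsto\rho_{X,m}(X^t(p))$ along a single orbit, which you justify properly via the cocycle identity $P_X^m(X^t(p))=P_X^{t+m}(p)\circ\bigl(P_X^t(p)\bigr)^{-1}$ and the fact that the Oseledets spaces along the orbit are the images $P_X^t(p)U_p$, $P_X^t(p)S_p$ of fixed subspaces (so no global measurability of the Oseledets bundles is needed), and nonemptiness being exactly the remark preceding the lemma (otherwise the transported Oseledets splitting would be an $m$-dominated splitting of index $k$ over the orbit, contradicting $p\in\Gamma_k(X,m)$). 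The Tonelli computation with flow-invariance of $\mu$ then gives the contrapositive cleanly; the only hand-waved item is Borel measurability of $\Delta_k^*(X,m)$, which you flag explicitly and which follows from standard measurability of the Oseledets data, so it is an acceptable level of detail. Your alternative sketch via flowboxes and Lemma~\ref{map} is closer in spirit to the machinery of \cite{B0,B1}, but the Fubini--Tonelli argument you give is shorter, avoids singularities, and is a legitimate replacement for the cited proof.
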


\end{subsection}

\end{section}

\begin{subsection}{The integrated upper Lyapunov exponent of exterior power of the linear Poincar\'{e} flow}\label{EF}
We consider the following function:
\begin{equation}\label{entropy}
\begin{array}{cccc} 
LE_{k}\colon & \mathfrak{X}^{1}_{\mu} & \longrightarrow & [0,+\infty) \\
& X & \longmapsto & \int_{M}\lambda_{1}(\wedge^{k}(X),p) d\mu(p).
\end{array} 
\end{equation}
In the same way we define the function $LE_{k}(X,\Gamma)$, where $\Gamma\subseteq M$ is a $X^{t}$-invariant set, defined by:
$$LE_{k}(X,\Gamma)=\int_{\Gamma}\lambda_{1}(\wedge^{k}(X),p) d\mu(p).$$
Let $\Sigma_{k}(X,p)$ denotes the sum of the $k$ first Lyapunov exponents of $X$, that is  $\Sigma_{k}(X,p)=\lambda_{1}(X,p)+...+\lambda_{k}(X,p)$. Is is an easy consequence of Theorem~\ref{arnauld} that for $k=1,...,d-2$ we have $\Sigma_{k}(X,p)=\lambda_{1}(\wedge^{k}(X),p)$ and so $LE_{k}(X,\Gamma)=LE_{1}(\wedge^{k}(X),\Gamma)$, for any $X^t$-invariant set $\Gamma$. By using Proposition 2.2 of~\cite{BV} we get immediately that:
\begin{equation}
LE_{k}(X,\Gamma)=\underset{j\in\mathbb{N}}{\inf}\frac{1}{j}\int_{\Gamma}\log\|\wedge^{k}(P_{X}^{j}(p))\|d\mu(p),
\end{equation}
concluding that, for all $k\in\{1,...,d-2\}$, the function~(\ref{entropy}) is an upper semicontinuous function.

\end{subsection}

\begin{section}{Proof of Theorem ~\ref{T1}} \label{PT1}

Next we consider an abstract object called \emph{realizable linear flow} which will play a central role in the proofs of the main theorems. Briefly, it consists in the following: we want to change the action of the linear Poincar\'e flow along the orbit of a given Oseledets point with lack of hyperbolic behavior, in order to decay its exponential asymptotic behavior. However, one single point is meaningless since we consider the Lebesgue measure and, moreover, the chosen  point may not be an Oseledets point for the perturbed vector field. So, in broad terms, we consider time-$t$ modified volume-preserving linear maps acting in the normal fiber at $p$,
$L_{t}(p)\colon N_{p}\rightarrow{N_{X^{t}(p)}}$ which perform exactly the action that we
want. Then, we build a divergence-free vector field, $C^1$-close to the original one such that the time-$t$ linear Poincar\'e map at $q\in K$ of this new vector field has \emph{almost} the same behavior as the map $L_{t}(p)$, where $K$ is a measurable set contained in a pre-assigned open set inside a small transversal section of $p$ such that both these sets have almost the same measure.

With this definition in mind we are able to ``realize dynamically'' the perturbations did in ~\cite{B1} concerning the skew-product flows version of Theorem~\ref{T1}.

This may be seen as a Franks' Lemma (see \cite{F} or \cite{AP,BGV,BR2} for the flows version) of a measure theoretical flavor type.

\begin{definition}\label{rlf}
Given $X\in{\mathfrak{X}^{1}_{\mu}(M)}$, $\epsilon>0$, 
$\kappa \in (0,1)$, $\ell  \in \mathbb{N}$, and a non-periodic point $p$, we say that the
modified volume-preserving sequence of linear maps $L_{j}:N_{X^{j}(p)}\rightarrow{N_{X^{j+1}(p)}}$ for $j=0,...,\ell-1$ is an $(\epsilon,\kappa)$-\emph{realizable linear flow of length $\ell$ at $p$} if the following occurs.

For all $\gamma>0$, there is $r>0$ such that for any non-empty open set
$U\subseteq{B(p,r)}\subseteq{\mathcal{N}_{p}}$, there exist a measurable set $K\subseteq{U}$ and a divergence-free vector field $Y$ satisfying:
\begin{enumerate}
\item [(a)]  $\overline{\mu}(K)>(1-\kappa)\overline{\mu}(U)$; 
\item [(b)] $\|Y-X\|_{C^1}<\epsilon$;
\item [(c)] $Y^{t}=X^{t}$ outside $\mathcal{F}^{\ell}_{X}(p)(U)$ and $DX_{q}=DY_{q}$ for every $q\in U \cup \mathcal{P}_{X}^{\ell}(p)(U)$; and 
\item [(d)] If $q\in{K}$, then $\|P^{1}_{Y}(Y^{j}(q))-L_{j}\|<\gamma$ for
$j=0,1,...,\ell-1.$
\end{enumerate}

\end{definition}

\bigskip

Let us consider some easy observations about this definition.

\begin{remark}
We note that this definition  only requires  $C^{1}$-closeness of vector fields. Also observe that realizable linear flows of length $\ell \in \mathbb{R}$ are also allowed and are defined in the obvious way. 
\end{remark}

\begin{remark}\label{vitali}
By basic Vitali's covering arguments we only have to prove the realizability of the linear maps for open sets $U=B(p',r')$ where $U\subseteq{B(p,r)}$.
\end{remark}

\begin{remark}\label{concat}
It is obvious that the time-$t$ linear Poincar\'e flow is itself $(\epsilon,\kappa)$-realizable of length $t$ for
every $\epsilon$ and $\kappa$. 
\end{remark}

\begin{remark} \label{conc} Condition (c) in the Definition~\ref{rlf} enables to concatenate an $(\epsilon,\kappa_1)$-realizable linear flow of length $\ell_1$ at $p$ with an $(\epsilon,\kappa_2)$-realizable linear flow of length $\ell_2$ at $X^{\ell_1}(p)$, obtaining an $(\epsilon,\kappa_{1}+\kappa_{2})$-realizable linear flow of length $\ell_1+ \ell_2$ at $p$. Notice that if $\kappa_1+\kappa_2 \geq 1$ then we do not have any useful estimate for the measure of the set $K=K_1 \cap X^{-\ell_1}(K_2)$. Actually we are just interested in the concatenation of $b$ realizable linear flows in the cases where $\sum_{j=0}^b \kappa_j <1$ (in fact close to zero) obtaining a measurable set $K$ such that the measure of $U \setminus K $ is less than  $\sum_{j=0}^b \kappa_j$.
\end{remark}

Next proposition is a key result that allows us to mix Oseledets' directions in the absence of domination.  Once we get this result the two lemmas of this section and the proof of Theorem~\ref{T1} are obtained borrowing ~\cite{BV}.

\begin{proposition}\label{exchange}
Given $X\in{\mathfrak{X}^{1}_{\mu}(M)}$, $\epsilon>0$ and $\kappa \in (0,1)$ there exists $m_{0}\in\mathbb{N}$ such that for every $m\geq m_{0}$  the following property holds. 

For all non-periodic point $p$ with a splitting $S_{X^t(p)}^j\oplus U_{X^t(p)}^j$, $t \in \mathbb{R}$ and $j \in \{1,...,d-2\}$ fixed, satisfying
\begin{equation}\label{nodom}
\frac{\|P_{X}^{m}(p)|_{S_{p}^j}\|}{\mathfrak{m}(P_{X}^{m}(p)|_{U_{p}^j})}\geq \frac{1}{2}, 
\end{equation}
there exist $(\epsilon_i,\kappa_i)$-realizable linear flows of length $\ell_i \geq  1$ at $X^{\tau_i}(p)$, with $0 \leq i \leq  b \leq m$, $\sum_{i=0}^{b}\ell_i=m$ and $\tau_i=\sum_{j=1}^{i-1}\ell_j$, denoted by $\{L_{i}\}_{i=1}^{b}$ and vectors $\mathfrak{u}\in U_{p}^j\setminus \{\vec{0}\}$ and $\mathfrak{s}\in S_{X^m(p)}^j\setminus \{\vec{0}\}$ such that:
\begin{enumerate}
 \item [(a)] The concatenation of the $b$ realizable linear flows is an $(\epsilon,\kappa)$-realizable linear flow of length $m$ at $p$ and 
 \item [(b)] $L_{b}\circ ... \circ L_{1}(\mathfrak{u})=\mathfrak{s}$
\end{enumerate}

\end{proposition}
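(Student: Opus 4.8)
The plan is to reduce this to the linear-algebra / skew-product statement already available in~\cite{B1}, and then to ``realize dynamically'' that abstract perturbation by a concatenation of elementary realizable linear flows of length one. First I would fix $\epsilon>0$ and $\kappa\in(0,1)$, and choose $b\in\mathbb N$ (depending only on $\epsilon$, $\kappa$ and the dimension $d$) so that $b$ elementary $C^1$-perturbations of size $\epsilon/b$ each, concatenated, stay $\epsilon$-close to $X$ and so that $b$ realizable pieces each of ``loss'' $\kappa/b$ concatenate (via Remark~\ref{conc}) into an $(\epsilon,\kappa)$-realizable linear flow. This dictates the threshold $m_0$: take $m_0=b$, so that for $m\ge m_0$ we may split the time interval $[0,m]$ into $b$ subintervals of integer lengths $\ell_1,\dots,\ell_b\ge 1$ with $\sum\ell_i=m$; item (a) is then exactly Remark~\ref{conc}.

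Next, for item (b), consider the linear Poincar\'e cocycle $P_X^{m}(p)$ restricted to the $P_X^t$-invariant splitting $U^j\oplus S^j$ along the finite orbit segment $p,X^1(p),\dots,X^{m}(p)$. The non-domination hypothesis~(\ref{nodom}) says precisely that $\|P_X^{m}(p)|_{S^j_p}\|\cdot\mathfrak m(P_X^{m}(p)|_{U^j_p})^{-1}\ge\tfrac12$, i.e. the most expanded direction in $S^j$ is not dominated by the least expanded direction in $U^j$. This is the discrete-time cocycle situation handled in~\cite{BV} (their Section~4, using the Bochi--Viana exchange mechanism) and in the volume-preserving-flow / linear-differential-system setting in~\cite{B1}: absence of $m$-domination of index $j$ for a sufficiently long orbit segment allows one to construct, by composing at most $b$ small volume-preserving rotations localized in the invariant $2$-planes $\mathrm{span}(\mathfrak u,\mathfrak s)$-type directions, a perturbed sequence of modified-volume-preserving linear maps $L_1,\dots,L_b$ (each a small perturbation of the corresponding $P_X^{\ell_i}(X^{\tau_i}(p))$, hence $(\epsilon_i,\kappa_i)$-realizable by Remark~\ref{concat} plus a single small rotation) whose composition sends a chosen unit vector $\mathfrak u\in U^j_p$ to a vector $\mathfrak s\in S^j_{X^m(p)}$. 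I would quote this linear exchange lemma from~\cite{B1} essentially verbatim, checking only that the ``modified volume-preserving'' normalization (the factor $x(t)$ of Lemma~\ref{map}) is respected — which it is, since rotations have determinant one and we compose them with the unperturbed $P_X^{\ell_i}$.

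The remaining point is to upgrade each abstract perturbed linear map $L_i$ to a genuine $(\epsilon_i,\kappa_i)$-realizable linear flow in the sense of Definition~\ref{rlf}. For the pieces where $L_i=P_X^{\ell_i}$ this is Remark~\ref{concat}. For the at most $b$ pieces carrying a rotation, one invokes the flowbox construction: inside a small neighborhood $U\subseteq B(X^{\tau_i}(p),r)$ of a transversal section, using the local coordinates of Subsection~\ref{lc} and the distortion control of Lemma~\ref{map}, one builds a divergence-free $Y$, supported in the flowbox $\mathcal F^{\ell_i}_X(X^{\tau_i}(p))(U)$, equal to $X$ on and outside the ends, whose time-$1$ linear Poincar\'e maps along the orbit of a large-measure subset $K\subseteq U$ are $\gamma$-close to the prescribed elementary maps — this is the Franks-type lemma ``of measure theoretical flavour'' alluded to in the text, and it is the technical heart. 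The main obstacle is precisely this last step: making the local rotation genuinely divergence-free and $C^1$-small while keeping $Y^t=X^t$ outside the flowbox and $DY=DX$ at the two transversal ends (condition (c)), and simultaneously guaranteeing that the ``good'' set $K$ where the realization is $\gamma$-accurate has $\overline\mu$-measure at least $(1-\kappa_i)\overline\mu(U)$. One controls the measure loss with Lemma~\ref{map} (choosing $r$ small so the distortion of $\overline\mu$ under the Poincar\'e map over $[0,\ell_i]$ is negligible) and localizes the rotation to the bulk of the flowbox, absorbing the transition to the identity near the ends into the $\kappa_i$-sized bad set; the $C^1$-smallness follows because the rotation angle needed at each of the $b$ steps is of order $\epsilon/b$. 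Finally, concatenating the $b$ realizable pieces via Remark~\ref{conc} yields the $(\epsilon,\kappa)$-realizable linear flow of length $m$ of item (a), with the composition realizing $\mathfrak u\mapsto\mathfrak s$ of item (b), completing the proof.
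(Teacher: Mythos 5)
Your outline correctly identifies the two ingredients (an abstract ``exchange of directions'' at the level of linear maps, plus a Franks-type realization of each elementary map, glued by Remark~\ref{conc}), and your description of the single-rotation realization is essentially Lemma~\ref{rot1} of the paper. But there is a genuine gap at the heart of the argument: you treat the exchange as achievable by ``at most $b$ small rotations'' with $b=b(\epsilon,\kappa,d)$ fixed in advance, set $m_0=b$, and then budget the measure loss as $\kappa/b$ per piece via Remark~\ref{conc}. This skips the case analysis that actually drives the proof and, in the main case, uses a concatenation scheme that the paper explicitly points out cannot work. The paper splits the hypothesis (\ref{nodom}) into three situations: (i) some time $t\in[0,m]$ where $\measuredangle(U_t,S_t)\leq\xi_0$, handled by one rigid rotation (Lemma~\ref{rot1}, Remark~\ref{rot1b}); (ii) some sub-segment where $\|P_X^r|_{S_t}\|/\mathfrak{m}(P_X^r|_{U_t})\geq c$, handled by two rotations; and (iii) the remaining case, where angles are bounded below and ratios bounded above, so that by \cite[Lemma 3.8]{BV} the quotient cocycle $P_X^t/H_0$ has eccentricity bounded by $8c/\sin^6(\xi_0)$ (estimate (\ref{lemmaBV})). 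It is only in case (iii) that one sees why $m_0$ must be large ($m\geq 2\pi/\theta$, with $\theta$ tied to $\epsilon$, $c$, $\xi_0$, hence to $X$, not just to $\epsilon,\kappa,d$): the total angle to sweep is of definite size while each time-one step can only carry a rotation of angle $\leq\theta$, so on the order of $2\pi/\theta$ perturbed steps are needed.

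In that main case your bookkeeping breaks down for two reasons. First, the maps to be realized are not rigid rotations in a $2$-plane of $N_{X^j(p)}$ but elliptical rotations $R_{\theta_j}=(P_X^j(p)/H_0)\circ\mathcal{E}_{\theta_j}\circ(P_X^j(p)/H_0)^{-1}\oplus Id$ acting through the quotient $N/H_j$; their realization (Lemma~\ref{elliptic}) requires supports which are right cylinders with axis much longer than base, not balls, and their closeness to the identity depends on the bound (\ref{lemmaBV}) — none of which is available without the case analysis. Second, and decisively, the number of perturbed steps is not a fixed $b$: it is comparable to $m$ (at least $2\pi/\theta$), and the admissible angle per step in Lemma~\ref{rot1}/\ref{elliptic} is itself coupled to the loss $\kappa$ and the size $\epsilon$, so prescribing loss $\kappa/b$ per piece and concatenating by Remark~\ref{conc} leads to an additive loss you cannot control — the paper states this explicitly just before Lemma~\ref{elliptic} (``there is no hope to use directly this concatenation argument \dots for large $\ell$''). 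This is exactly why the paper proves Lemma~\ref{sequences} (the continuous-time version of \cite[Lemma 3.3]{BV}): the whole length-$n$ sequence of maps $L_j=P_X^1(X^j(p))\circ R_j$ is realized at once, using nested cylinders $\mathcal{C}_j=\lambda^j b_0\mathcal{B}_j\oplus\lambda^{2j}a_0\mathcal{A}_j$ and a disintegration argument, producing a single good set $K$ with $\overline{\mu}(K)/\overline{\mu}(U)\approx\lambda^{2nd-3n}\sigma^d$ controlled independently of the naive sum of per-step losses. Without this (or an argument that each elliptical-rotation step is realizable with loss that can be taken as small as $\kappa/b$ uniformly in the angle and eccentricity, which you do not provide), your proof of item (a) does not close; quoting \cite{B1} ``essentially verbatim'' for item (b) also hides the construction of the vectors $\mathfrak{u}$, $\mathfrak{s}$, which in case (iii) comes from the quotient computation $(L_{m-1}\circ\cdots\circ L_0)(v_0-g_0)\in S_m$ rather than from a single rotated $2$-plane.
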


This proposition will be proved in sections~\ref{few} and ~\ref{lots}. In Section~\ref{few} we consider the easiest case, that is when we only need to do, at most, two perturbations to achieve our goal. Section~\ref{lots} is technically harder, because we need to do many perturbations along the orbit and each time we concatenate two realizable linear flows the relative measure in $U$ of the associated set $K$ decreases.

Once we are able to realize dynamically the action which mix the Oseledets directions,  to prove Theorem~\ref{T1} we need to use the following two results.

\begin{lemma}\label{local}(Local)
Let $X\in{\mathfrak{X}^{1}_{\mu}(M)}$. Then, given any $\epsilon,\delta>0$, a small $\kappa>0$ and $k \in \{1,\dots,d-2\}$, there exist $m_0 \in \mathbb{N}$ and, for each $m \geq m_0$, a measurable function $\tilde{T} \colon \Gamma_k^\ast(X,m) \rightarrow \mathbb{R}$  satisfying the following properties: for $\mu$-almost every point $q \in \Gamma_k^\ast(X,m)$ and every $t>\tilde{T}(q)$ there exists a modified volume-preserving sequence of linear maps $L_{j}:N_{X^{j}(q)}\rightarrow{N_{X^{j+1}(q)}}$ for $j=0,...,t-1$ which is an $(\epsilon,\kappa)$-realizable linear flow of length $t$ at $q$ satisfying  
$$\frac{1}{t}\log\|\wedge^k( L_{t-1}\circ \cdots L_1 \circ L_0)\|<\delta+\frac{1}{2}(\Sigma_{k-1}(X,q)+\Sigma_{k+1}(X,q)).$$
\end{lemma}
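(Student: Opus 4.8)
The statement is a local realizability result: at $\mu$-almost every point $q$ of $\Gamma_k^*(X,m)$ whose orbit lacks $m$-domination of index $k$, we can find an $(\epsilon,\kappa)$-realizable linear flow of arbitrary long length $t$ whose $k$-th exterior norm grows at a rate dominated by $\tfrac12(\Sigma_{k-1}(X,q)+\Sigma_{k+1}(X,q))+\delta$. The idea, following Bochi--Viana~\cite{BV} Lemma 4.1 (but now transported to the flow via Definition~\ref{rlf} and Proposition~\ref{exchange}), is to break the time interval $[0,t]$ into consecutive blocks of length $m$ and apply Proposition~\ref{exchange} on each block for which the non-domination inequality~(\ref{nodom}) holds, and to use the genuine linear Poincar\'e flow (which is trivially realizable, Remark~\ref{concat}) on the remaining blocks, finally concatenating everything by Remark~\ref{conc}.

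\textbf{Main steps, in order.} First, fix $m_0$ as produced by Proposition~\ref{exchange} for the data $\epsilon,\kappa$ (shrinking $\kappa$ as needed so that the cumulative measure loss in the concatenation stays controlled — each block contributes at most $\kappa'$ with $\kappa'$ chosen so that $t/m \cdot \kappa' < \kappa$; this is exactly the bookkeeping flagged in Remark~\ref{conc}). Second, for $q\in\Gamma_k^*(X,m)$ the orbit has, by definition of $\Gamma_k^*$, infinitely many times at which~(\ref{ds}) fails; using the invariance/recurrence arguments and Oseledets regularity (the Lyapunov exponents $\Sigma_{k\pm1}$ are well defined $\mu$-a.e. and $\tfrac1t\log\|\wedge^k P_X^t(q)\|\to\Sigma_k(X,q)$), one shows that for a full-measure subset there is a threshold $\tilde T(q)$ past which a \emph{positive density} of the length-$m$ blocks along $[0,t]$ satisfy the non-domination hypothesis~(\ref{nodom}) of Proposition~\ref{exchange}; measurability of $\tilde T$ follows from the measurability of the data. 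Third, on each such block apply Proposition~\ref{exchange}: it yields a realizable linear flow $L^{(i)}$ of length $m$ together with unit vectors $\mathfrak u_i\in U^k$, $\mathfrak s_i\in S^k$ with $L^{(i)}(\mathfrak u_i)=\mathfrak s_i$, i.e. the perturbed dynamics sends the top bundle into the bottom bundle; this is the mechanism that replaces $\Sigma_k$-growth ($\lambda_1+\cdots+\lambda_k$) by the averaged value $\tfrac12(\Sigma_{k-1}+\Sigma_{k+1})$, since the $k$-dimensional volume can no longer align with the most-expanding $k$-subspace and is forced onto a ``mixed'' $k$-subspace using $\lambda_{k+1}$ in place of $\lambda_k$. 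Fourth, on the non-modified blocks keep $L=P_X^m$ (Remark~\ref{concat}). Fifth, concatenate all blocks (Remark~\ref{conc}) to get an $(\epsilon,\kappa)$-realizable linear flow of length $t$ at $q$, and estimate $\tfrac1t\log\|\wedge^k(L_{t-1}\circ\cdots\circ L_0)\|$ by splitting the product according to blocks, using submultiplicativity of $\|\wedge^k(\cdot)\|$, using Theorem~\ref{arnauld} to identify the relevant exponent sums on modified versus unmodified blocks, and using that modified blocks have positive density so that the average strictly beats $\Sigma_k$ by a definite amount; choosing $m_0$ large and $t$ large absorbs the error into $\delta$.

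\textbf{Expected main obstacle.} The delicate point is the quantitative accounting in the last step: showing that exchanging the Oseledets directions on a positive-density set of blocks really brings the exponential growth rate down to (essentially) $\tfrac12(\Sigma_{k-1}+\Sigma_{k+1})$ rather than merely slightly below $\Sigma_k$. One must control, uniformly in the block, how $\wedge^k$ of the perturbed cocycle behaves — in particular that after the swap the best growth available is governed by $\lambda_1+\cdots+\lambda_{k-1}+\lambda_{k+1}$ on that block — and then balance the "swapped" and "non-swapped" blocks to land on the symmetric average; this requires carefully chaining the $\gamma$-approximations of Definition~\ref{rlf}(d) so they do not accumulate, and invoking the upper-semicontinuity / infimum formula for $LE_k$ from Subsection~\ref{EF} to pass from finite-time estimates to the asserted bound. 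The recurrence argument giving positive density of bad blocks (essentially a maximal/ergodic-type argument as in~\cite{BV}) together with the measurability and a.e.\ finiteness of $\tilde T$ is the other place where care is needed, but it is more routine than the volume-growth estimate.
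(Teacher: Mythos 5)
There is a genuine gap: the construction you propose — perturbing a \emph{positive density} of length-$m$ blocks and estimating by block-wise submultiplicativity — is not the argument the paper relies on (it cites the proof of Proposition 4.2 of \cite{BV}, via Lemma 4.2 of \cite{B1}, which uses a \emph{single} application of Proposition~\ref{exchange} placed near the midpoint of $[0,t]$), and as described it would not work. First, the $\kappa$-budget: each block on which you invoke Proposition~\ref{exchange} costs a fixed $\kappa'>0$ in the concatenation of Remark~\ref{conc}, and the number of such blocks grows like $\alpha t/m\to\infty$, so the total loss $\sum_j\kappa_j$ exceeds $\kappa$ (indeed exceeds $1$) for large $t$. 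Your proposed fix, choosing $\kappa'$ with $(t/m)\kappa'<\kappa$, is circular: $m_0$ in Proposition~\ref{exchange} depends on $\kappa'$, while the lemma fixes $m\geq m_0$ \emph{before} $t$ is chosen, so $\kappa'$ cannot be taken to depend on $t$. Second, and more fundamentally, submultiplicativity of $\|\wedge^k(\cdot)\|$ over blocks cannot beat $\Sigma_k$: the exchanged block has $\wedge^k$-norm comparable to the unperturbed one (it moves one vector of $U$ into $S$, it does not shrink $k$-volumes), so multiplying block norms gives rate $\Sigma_k$, not $\frac{1}{2}(\Sigma_{k-1}+\Sigma_{k+1})$. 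Your heuristic that after the swap the growth is governed by $\Sigma_{k-1}+\lambda_{k+1}$ is also not attainable as a bound on the operator norm: $k$-planes with no component along the exchanged direction (e.g.\ close to the complementary Oseledets directions) are untouched by the swap and can expand at the full rate afterwards.

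The actual mechanism, which produces exactly the symmetric average, is: keep the genuine linear Poincar\'e flow (trivially realizable, Remark~\ref{concat}) on $[0,q]$ and $[q+m,t]$, and insert \emph{one} exchange from Proposition~\ref{exchange} at a time $q\approx t/2$ where \eqref{nodom} holds; the total $\kappa$-loss is then bounded independently of $t$. The estimate is a two-term balance, not a product of block norms: $k$-vectors carrying the exchanged direction grow at rate at most $\Sigma_k$ on $[0,q]$ and at most $\Sigma_{k-1}+\lambda_{k+1}$ on $[q+m,t]$, while $k$-vectors transverse to it grow at most $\Sigma_{k-1}+\lambda_{k+1}$ before and $\Sigma_k$ after; the maximum of the two totals is minimized when $q\approx t/2$, giving $\frac{t}{2}\bigl(\Sigma_k+\Sigma_{k-1}+\lambda_{k+1}\bigr)=\frac{t}{2}\bigl(\Sigma_{k-1}+\Sigma_{k+1}\bigr)$ up to $\delta t$. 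What the recurrence/Birkhoff argument must deliver is therefore not a positive density of exchanged blocks but the existence, for a.e.\ $q\in\Gamma_k^\ast(X,m)$ and all $t>\tilde T(q)$, of a non-domination time in $[t(\tfrac12-\theta),t(\tfrac12+\theta)]$ (this is where positive frequency of visits to $\Delta_k^\ast(X,m)$, together with Oseledets finite-time control past $\tilde T(q)$, enters), and this is the content of $\tilde T$. So the part of your plan worth keeping is the recurrence/measurability discussion; the construction and the volume-growth estimate need to be replaced by the single mid-time exchange and the balancing argument of \cite{BV}.
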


This lemma corresponds to Proposition 4.2 of~\cite{BV} adapted to the flow setting and, in view of Proposition~\ref{exchange}, its proof follows exactly as the proof of Lemma 4.2 of~\cite{B1}. This local procedure uses the lack of domination and also the different Lyapunov exponents to cause a decay of the largest Lyapunov exponent of the $k^{th}$ exterior power of the linear Poincar\'{e} flow by a just small perturbation.

The next lemma is a global version of the previous one. Once we have the local version (Lemma~\ref{local}) its proof follows directly the proof of Proposition 4.17 of~\cite{BV} and which uses a Kakutani's tower argument. We also refer ~\cite{B0} for the ingredients used in the flow framework. 

\begin{lemma}\label{continuous}(Global)
Let $X\in{\mathfrak{X}^{1}_{\mu}(M)}$. Then, given any $\epsilon,\delta>0$, and $k \in \{1,\dots,d-2\}$, there exists $Y\in{\mathfrak{X}^{1}_{\mu}(M)}$, $\epsilon$ $C^1$-close, such that
$$ \int_M\Sigma_k(Y,p)d\mu(p)<\int_M\Sigma_k(X,p)d\mu(p)- 2J_k(X)+\delta, $$
where $J_k(X)=\int_{\Gamma_k(X,\infty)}\lambda_k(X,p)-\lambda_{k+1}(X,p)d\mu(p)$. 
\end{lemma}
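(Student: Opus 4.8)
The plan is to derive Lemma~\ref{continuous} from the local result Lemma~\ref{local} by a measurable selection of perturbation times together with a Kakutani (Rokhlin) tower construction, exactly in the spirit of Proposition 4.17 of~\cite{BV}. First I would fix $\epsilon,\delta>0$ and $k\in\{1,\dots,d-2\}$, choose a small $\kappa>0$ (to be pinned down later so that the accumulated measure loss from concatenations is negligible), and invoke Lemma~\ref{local} to obtain $m_0$ and, for each $m\ge m_0$, the measurable time function $\tilde T\colon\Gamma_k^\ast(X,m)\to\mathbb{R}$. The point is that on the invariant set where domination of index $k$ fails, the local lemma lets us realize — by an $\epsilon$-small divergence-free perturbation supported in a flowbox — a linear cocycle whose $k$-th exterior norm grows at rate strictly below $\tfrac12(\Sigma_{k-1}(X,q)+\Sigma_{k+1}(X,q))+\delta$, i.e. with a gain of essentially $\lambda_k(X,q)-\lambda_{k+1}(X,q)$ over the unperturbed value $\Sigma_k(X,q)$.

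Next I would set up the tower. Taking $m$ large and a base section $B\subseteq\Delta_k^\ast(X,m)$ of positive measure (possible by Lemma~\ref{invariance}, since $\mu(\Gamma_k^\ast(X,m))>0$ whenever $J_k(X)>0$; if $J_k(X)=0$ there is nothing to prove), one uses the first-return map of the flow to $B$ and the measurable function $\tilde T$ to cut the return orbit segments into consecutive blocks over which Lemma~\ref{local} applies. On each such block one replaces the true linear Poincar\'e flow by the realizable linear flow furnished by the lemma; by Definition~\ref{rlf}(c) the individual perturbations have disjoint supports (they agree with $X$ outside their flowboxes and match $DX$ on the boundary sections), so they can be glued into a single global $Y\in\mathfrak X^1_\mu(M)$ with $\|Y-X\|_{C^1}<\epsilon$. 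By Definition~\ref{rlf}(d), on the full-measure subset $K$ of each flowbox the linear Poincar\'e cocycle of $Y$ is $\gamma$-close to the prescribed maps $L_j$, and choosing $\gamma$ small and using the estimate of Lemma~\ref{local} on each block, the Birkhoff/Oseledets average of $\tfrac1t\log\|\wedge^k(P_Y^t)\|$ over the tower is pushed below the target. Integrating over $M$: on the tower (which by choosing $m$ large and the tower tall captures up to measure-$\delta'$ error all of $\Gamma_k(X,\infty)$, using the higher-domination property (H) to pass from fixed $m$ to $\Gamma_k(X,\infty)$) the quantity $\int\Sigma_k(Y,p)\,d\mu$ drops by roughly $2\int_{\Gamma_k(X,\infty)}(\lambda_k-\lambda_{k+1})\,d\mu=2J_k(X)$ — the factor $2$ coming from the symmetric form $\tfrac12(\Sigma_{k-1}+\Sigma_{k+1})=\Sigma_k-\tfrac12(\lambda_k-\lambda_{k+1})$ applied along with its "dual" on the reversed orbit — while outside the tower $\Sigma_k$ is essentially unchanged because $Y=X$ there, up to the small error from upper semicontinuity of $LE_k$ established in Section~\ref{EF}. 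Absorbing all the accumulated errors ($\kappa$, $\gamma$, tower boundary, the $m$-to-$\infty$ passage, semicontinuity) into a single $\delta$ gives the stated inequality.

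The main obstacle is bookkeeping the error terms coherently through the tower construction: one must simultaneously (i) keep the total relative measure discarded by concatenating the many realizable linear flows along a long orbit segment below $\kappa$ (cf. Remark~\ref{conc}, which requires $\sum\kappa_j<1$ and really needs it close to $0$), (ii) ensure the perturbed cocycle's exterior-power norm, which is only controlled on the set $K$, still controls the \emph{integrated} Lyapunov exponent of $Y$ — here one invokes the characterization $LE_k(Y,\Gamma)=\inf_j \tfrac1j\int_\Gamma\log\|\wedge^k(P_Y^j)\|\,d\mu$ from Section~\ref{EF} so that a good finite-time estimate on a large-measure set suffices, and uses that the part of phase space where things go wrong has small measure and contributes a bounded integrand, and (iii) transfer the estimate, valid a priori only for the fixed $m$ defining $\Gamma_k^\ast(X,m)$, to the full set $\Gamma_k(X,\infty)=\bigcap_m\Gamma_k(X,m)$, which is handled by first proving the bound with $J_k$ replaced by $\int_{\Gamma_k(X,m)}(\lambda_k-\lambda_{k+1})$ and letting $m\to\infty$ with the higher-domination and extension properties (H),(E). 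None of these steps is conceptually new — each is the flow analogue of a step in~\cite{BV} and~\cite{B1},~\cite{B0} — so I would cite those references for the routine verifications and concentrate the written proof on the tower assembly and the error accounting.
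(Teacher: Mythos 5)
Your route is precisely the paper's: the authors dispose of this lemma by invoking the Kakutani tower argument of Proposition 4.17 of \cite{BV} once Lemma~\ref{local} is in hand, with the flow-specific ingredients taken from \cite{B0}; your tower over a base inside $\Delta_k^\ast(X,m)$, the gluing of the realizable linear flows through condition (c) of Definition~\ref{rlf}, the control of the integrated exponent via the inf-characterization of $LE_k$ from Subsection~\ref{EF}, and the passage from a fixed $m$ to $\Gamma_k(X,\infty)$ are exactly the steps that proof carries out, so in structure your proposal and the paper's proof coincide.

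The one genuine problem is your bookkeeping of the coefficient of $J_k(X)$. Lemma~\ref{local} gives, pointwise on $\Gamma_k^\ast(X,m)$, the bound $\tfrac12(\Sigma_{k-1}+\Sigma_{k+1})+\delta=\Sigma_k-\tfrac12(\lambda_k-\lambda_{k+1})+\delta$, so the Bochi--Viana tower argument produces a decay of the integrated exponent of roughly $\tfrac12\int_{\Gamma_k(X,\infty)}(\lambda_k-\lambda_{k+1})\,d\mu=\tfrac12 J_k(X)$ in the paper's normalization; this is literally Proposition 4.17 of \cite{BV}, whose quantity $J_p$ carries the factor $\tfrac12$ inside the integral. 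Your device for reaching the stated $2J_k(X)$ --- applying the symmetric estimate ``together with its dual on the reversed orbit'' --- is not part of that argument and is unjustified: the perturbation is made once along each tower column, time reversal yields no independent second gain, and even if it did you would arrive at $J_k(X)$, not $2J_k(X)$. Thus what your construction (like the paper's citation of \cite{BV}) actually proves is the inequality with $\tfrac12 J_k(X)$ in place of $2J_k(X)$; the discrepancy sits in the constant of the statement itself (BV's $J$ has been renormalized without adjusting the coefficient) and is harmless for every later use (the proofs of Theorem~\ref{T1} and Lemma~\ref{step3} only need the drop to be a positive multiple of $J_k$), but you should either supply an argument for the claimed constant or state the inequality with the constant your argument delivers.
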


Now to prove Theorem~\ref{T1} we argue exactly as in ~\cite[pp 1467]{BV}.

For $k \in \{1,...,d-2\}$ let $\mathcal{E}_k$ be the subset of $\mathfrak{X}^{1}_{\mu}(M)$ corresponding to the points of continuity of the map $LE_{k}$, see (\ref{entropy}), and define $\mathcal{E}=\cap_1^{d-2}\mathcal{E}_k$. It is well known that the sets $\mathcal{E}_k$ are residual and so is $\mathcal{E}$. If $X \in \mathcal{E}_k$ then, by the definition of this set and by Lemma ~\ref{continuous}, $J_k(X)=0$. Therefore $\lambda_k(X,p)=\lambda_{k+1}(X,p)$ for a.e $p \in \Gamma_k(X, \infty)$. For  $X \in \mathcal{E}$ let: 
\begin{itemize}
 \item $\mathcal{Z}= \mathcal{O}(X) \cap (\cap_{k=1}^{d-2} \Gamma_k(X,\infty))$ and 
 \item $\mathcal{D}= \mathcal{O}(X)  \setminus (\cap_{k=1}^{d-2} \Gamma_k(X,\infty))$
\end{itemize}

If $p \in \mathcal{Z}$ then all the Lyapunov exponents of $p$ are equal to zero. On the other hand if $ p \in \mathcal{D}$ then $p \notin \Gamma_k(X,\infty)$ for some $1 \leq k \leq d-2$, therefore, by the definition of these sets, there exists $m \in \mathbb{N}$ such $p \in \Lambda_k(X,m)$, meaning that there exists an $m$-dominated splitting of index $k$ along the orbit of $p$. This ends the proof of Theorem~\ref{T1}.

\end{section}

\begin{section}{Proof of Theorem~\ref{T2}}

Our arguments to prove Theorem~\ref{T2} are borrowed from the ones used by Bochi, Fayad and Pujals in~\cite{BFP}. However, in the divergence-free vector fields case, we can use some $C^1$-perturbation results in order to give a more general statement than the one obtained in~\cite[Theorem 1]{BFP}. In fact, in ~\cite{BFP} they considered conservative diffeomorphisms of class $C^1$ and with derivative $\alpha$-H\"{o}lder (with $\alpha>0$) endowed with the $C^1$-topology which,  in particular, is not a complete metric space and it is not known if this subspace is $C^1$-dense in the  space of $C^1$ conservative diffeomorphisms. In our result we just need to consider $C^1$ vector fields endowed with the $C^1$-topology. 

We discuss now the three fundamental steps of the proof of Theorem~\ref{T2} and at the end of the section we complete the proof.

We recall that $X\in\mathfrak{X}_\mu^1(M)$ is $C^1$-\emph{robustly transitive} if $X^t$ has a dense orbit and any $Y\in\mathfrak{X}_\mu^1(M)$ sufficiently $C^1$-close to $X$ has also a flow with a dense orbit. It is easy to see that if $X\in \mathcal{SE}^1$, then $X$ must be $C^1$-robustly transitive. 

Let us first observe that, by ~\cite[Theorem 1.1]{BR2}, a $C^1$-stably ergodic vector field $X$ does not have singularities. As $C^s$ divergence-free vector fields are $C^1$-dense in $\mathfrak{X}_\mu^1(M)$ (Zuppa's Theorem, ~\cite{Z}), by Theorem 1.2 of ~\cite{BR2} there exists a $C^1$-dense subset of $\mathcal{SE}^1$, $\mathcal{DSE}$ whose vector fields admit a dominated splitting over $M$.

\medskip

We say that a dominated splitting $N=N^1\oplus ... \oplus N^j$ of $X$ is the \emph{finest dominated splitting} if there is no dominated splitting with more that $j$ subbundles. As is pointed out in~\cite{BFP} it is possible that the continuation of the finest dominated splitting is not the finest dominated splitting of the perturbed vector field. Hence, we say that a dominated splitting of $X \in \mathfrak{X}^{1}_{\mu}(M)$ is \emph{stably finest} if, for every $Y$ sufficiently $C^1$-close vector field in $\mathfrak{X}^{1}_{\mu}(M)$, it has a continuation which is the finest dominated splitting of $Y$. Flows with stably finest splittings are open and dense in the class of flows with dominated splitting. Given $X\in \mathcal{DSE}$ we take $X_1\in \mathfrak{X}^{1}_{\mu}(M)$ $C^1$-close to $X$ and having a stably finest dominated splitting $N=N^1(X_1)\oplus...\oplus N^k(X_1)$. We denote by 
$$\Sigma^i(X_1)=\int_M \log|P_{X_1}^{1}(p)_{|N^i(X_1)}|d\mu(p)$$ the sum of the Lyapunov exponents of the subbundle $N^i(X_1)$. In~\cite{BR} we proved the following result:

\begin{theorem}\label{removing}
Let $X_1 \in \mathfrak{X}^{1}_{\mu}(M)$ be a stably ergodic vector field having a (stably finest) dominated splitting. Then, either $\Sigma^i(X_1)\not= 0$, or else $X_1$ may be approximated, in the $C^{1}$-topology, by $X_2 \in \mathfrak{X}^{2}_{\mu}(M)$ for which $\Sigma^i(X_2) \not=0$.
\end{theorem}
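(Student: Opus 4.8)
The plan is to prove Theorem~\ref{removing} by reducing the statement to a perturbation result that concentrates, along a subbundle where the sum of Lyapunov exponents vanishes, a $C^1$-small change of the vector field which forces that sum to become nonzero; the assumption that $X_1$ is stably ergodic is used to guarantee that the two cases ($C^1$ vs. $C^2$) really do exhaust the possibilities. First I would fix the subbundle $N^i(X_1)$ of the stably finest dominated splitting and consider the integrated Lyapunov-sum functional $X\mapsto\Sigma^i(X)=\int_M\log|P_X^1(p)|_{N^i(X)}|\,d\mu(p)$, which is well defined for all $Y$ in a $C^1$-neighborhood of $X_1$ because the splitting persists with a continuation $N^i(Y)$ that varies continuously, and which is continuous in $Y$ on that neighborhood (again by continuity and domination of the continuation, together with the uniform estimates coming from the dominated splitting). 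If $\Sigma^i(X_1)\neq0$ we are done, so I would assume $\Sigma^i(X_1)=0$ and aim to perturb.

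The main step is the perturbative one: assuming $\Sigma^i(X_1)=0$, I would produce a $C^1$-small perturbation $X_2$ — which can be taken of class $C^2$ since $C^2$ (indeed $C^s$) divergence-free vector fields are $C^1$-dense by Zuppa's Theorem~\cite{Z}, and the dominated splitting with its continuation survives this approximation — for which $\Sigma^i(X_2)\neq0$. The natural mechanism is to localize inside a small flowbox over a well-chosen non-periodic regular point $p$ whose orbit spends positive-measure time in a region where the dynamics on $N^i$ is ``rotation-like'' (no nonzero exponent), and there insert a divergence-free perturbation that creates a definite expansion/contraction bias on the $i$-th subbundle without destroying domination; averaging this over a Kakutani tower, in the spirit of the realizable linear flow machinery (Definition~\ref{rlf}, Proposition~\ref{exchange}) and the global argument of Lemma~\ref{continuous}, yields a definite sign for $\Sigma^i$. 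Here the stable ergodicity of $X_1$ is crucial: it forces robust transitivity (hence no singularities, and a rich recurrence structure), which is what lets us choose base points and tower bases of positive measure with the required recurrence; without ergodicity the perturbation could be confined to an invariant subset of zero measure and would not move the integral.

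Finally I would assemble the two cases into the stated dichotomy: if the $C^1$-neighborhood on which $\Sigma^i$ is defined contains no vector field with $\Sigma^i\neq0$, then in particular $\Sigma^i(X_1)=0$ and the perturbation of the previous paragraph gives a contradiction; hence either $\Sigma^i(X_1)\neq0$ outright, or $X_1$ is $C^1$-approximated by $X_2\in\mathfrak{X}^2_\mu(M)$ with $\Sigma^i(X_2)\neq0$, which is exactly the conclusion. The hard part will be the perturbative step — specifically, engineering a divergence-free $C^1$-small modification supported in a flowbox that unbalances the integrated exponent on the single subbundle $N^i$ while keeping the full finest dominated splitting (and its index structure) intact; this is where one must combine the local realizable-linear-flow construction with control of the domination constants so that the continuation of the splitting is preserved throughout the perturbation.
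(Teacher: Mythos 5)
This theorem is not proved in the present paper at all: it is quoted from the authors' earlier work \cite{BR} (``Removing zero Lyapunov exponents in volume-preserving flows''), so the only honest comparison is between your sketch and the argument of \cite{BR}, which is a flow version of the Baraviera--Bonatti scheme. Your trivial scaffolding is fine (the dichotomy reduces to: assume $\Sigma^i(X_1)=0$, use Zuppa to work with a $C^2$ field, and show the integrated sum can be pushed off zero by a $C^1$-small, divergence-free, $C^2$ perturbation; continuity of $Y\mapsto\Sigma^i(Y)$ where the continuation $N^i(Y)$ persists is standard). But the core perturbative step is where your proposal has a genuine gap: you assert that one can ``insert a divergence-free perturbation that creates a definite expansion/contraction bias on the $i$-th subbundle'' and propose to do it with the realizable-linear-flow machinery (Definition~\ref{rlf}, Proposition~\ref{exchange}) and the Kakutani-tower argument of Lemma~\ref{continuous}. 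That is the wrong tool. Since the flow is volume-preserving, the sum of \emph{all} exponents is zero, so a bias on $N^i$ cannot be ``created''; it can only be obtained by transferring expansion between $N^i$ and the complementary bundles of the dominated splitting. Proposition~\ref{exchange} mixes Oseledets directions precisely under the \emph{failure} of $m$-domination (hypothesis (\ref{nodom})), and Lemma~\ref{continuous} only ever \emph{decreases} integrated sums of top exponents by the defect of domination $J_k$; across the bundles $N^i$, $N^{i\pm1}$ of a (stably finest) dominated splitting over all of $M$ that hypothesis fails by assumption, so this machinery cannot move $\Sigma^i$ away from zero, and within $N^i$ it can at best equalize exponents (which is exactly Lemma~\ref{step3}), never unbalance their sum.

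What is actually needed, and what \cite{BR} does, is the opposite mechanism: exploit the domination itself. One rectifies the flow near a segment of a regular orbit (Zuppa plus Lemma~\ref{cfbt} and the pasting lemma, which is also why the output $X_2$ lands in $\mathfrak{X}^2_\mu(M)$) and composes with a small volume-preserving rotation that tilts $N^i$ towards the dominating (or dominated) complementary bundle; the uniform transversality and cone invariance furnished by the dominated splitting guarantee that the continuation $N^i(X_2)$ picks up a definite, quantifiable gain in the integrated Jacobian, so $\Sigma^i(X_2)\neq 0$. Your sketch never supplies this transfer estimate, which is the entire content of the theorem. Incidentally, your stated role for stable ergodicity (``without ergodicity the perturbation could be confined to a zero-measure invariant set'') is also off: the perturbation is supported in a flowbox of positive measure in any case; in this paper's context stable ergodicity is used, via \cite[Theorem 1.1]{BR2}, to exclude singularities so that the linear Poincar\'e flow and the global dominated splitting are defined everywhere, and to keep the splitting and its continuation global for all nearby fields.
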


We observe that the vector field $X_2$ given by the previous result is stably ergodic and has a finest dominated splitting. 

The next lemma is the final step to prove Theorem~\ref{T2}.

\begin{lemma}\label{step3}
Let $X \in \mathcal{SE}^{1}$ have a stably finest dominated splitting, $N^1(X)\oplus ...\oplus N^k(X)$. Then for all $\delta>0$, there exists $Y\in \mathfrak{X}^{1}_{\mu}(M)$ ($C^1$-close to $X$) such that, if $\lambda_j(Y)$ and $\lambda_{j+1}(Y)$ are associated to the subbundle $N^i(Y)$, then $|\lambda_j(Y)-\lambda_{j+1}(Y)|<\delta$. 
\end{lemma}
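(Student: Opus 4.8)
The plan is to exploit the characterization of the finest dominated splitting together with the realizable linear flow technology of Section~\ref{PT1}. The heuristic is that inside a single subbundle $N^i(X)$ there can be no \emph{further} domination, so on a positive-measure set the ratio between the largest and smallest infinitesimal expansion rates inside $N^i$ fails to be dominated; this is exactly the hypothesis needed to invoke Proposition~\ref{exchange} restricted to the bundle $N^i$, and hence to apply the local mixing Lemma~\ref{local} and the global averaging Lemma~\ref{continuous} \emph{within} the invariant subbundle. Concretely, I would first restrict the whole Oseledets machinery of Subsection~\ref{MET} and the exterior-power formalism of Theorem~\ref{arnauld} to the $P_X^t$-invariant subbundle $N^i$, viewing $P_X^t|_{N^i}$ as a (modified) volume-type linear cocycle on a bundle of dimension $\dim N^i$; note that while $P_X^t|_{N^i}$ need not have determinant of modulus one, its logarithmic Jacobian is integrable and cohomologous to a constant, so the Bochi--Viana scheme applies after the usual normalization (this is precisely the kind of reduction carried out in~\cite{B1} and~\cite{BV}).

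The key steps, in order, are: (1) Fix the index $i$ and the two consecutive global Lyapunov exponents $\lambda_j(X)\ge\lambda_{j+1}(X)$ that are carried by $N^i(X)$, and suppose for contradiction that $\lambda_j(X)-\lambda_{j+1}(X)\ge\delta$ on a positive-measure set. (2) Because $N^i$ is a block of the \emph{finest} dominated splitting, there is no $P_X^t$-invariant continuous subsplitting of $N^i$ of any index with any domination constant; quantitatively, for every $m$ the ``intermediate'' index-$r$ domination fails along the orbit of a.e.\ point of $N^i$ carrying distinct exponents, so a.e.\ such point lies in the set $\Gamma_r^\ast$ (relative to the restricted cocycle) for some $1\le r<\dim N^i$. (3) Apply Proposition~\ref{exchange} to the restricted cocycle on $N^i$ at those points, producing, via Lemma~\ref{local}, $(\epsilon,\kappa)$-realizable linear flows that decrease the top exponent of $\wedge^r(P_X^t|_{N^i})$, i.e.\ the partial sum $\Sigma_r$ inside $N^i$. (4) Assemble these local perturbations with the Kakutani-tower argument of Lemma~\ref{continuous} to get $Y$, $C^1$-close to $X$, with $\int_M \Sigma_r(Y|_{N^i},p)\,d\mu < \int_M \Sigma_r(X|_{N^i},p)\,d\mu - 2\tilde J + \delta/2$ where $\tilde J>0$ measures the total ``gap'' $\lambda_j-\lambda_{j+1}$ over the non-dominated part; crucially the perturbation in Lemma~\ref{local} is supported in flowboxes and, by condition (c) of Definition~\ref{rlf}, does not destroy the splitting $N^1\oplus\cdots\oplus N^k$, so its continuation for $Y$ still has $N^i(Y)$ as a block. (5) Iterate (finitely, using upper semicontinuity of the $LE_k$ from Subsection~\ref{EF} and compactness) to drive the integrated gap below $\delta$; since $\lambda_j(Y),\lambda_{j+1}(Y)$ are the extreme exponents inside $N^i(Y)$, continuity of the relevant integrals gives $|\lambda_j(Y)-\lambda_{j+1}(Y)|<\delta$, completing the proof.

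I expect the main obstacle to be step (2)--(4): making rigorous the statement that ``finest'' forces genuine lack of domination \emph{inside} each subbundle in the quantitative, measure-theoretic form required by Proposition~\ref{exchange}, and simultaneously guaranteeing that the realizable-linear-flow perturbation that mixes exponents inside $N^i$ does not create domination between $N^i$ and its neighbours $N^{i-1},N^{i+1}$ (which would change the block structure and invalidate the induction). This is handled in~\cite{BFP} for diffeomorphisms by working with the stably finest splitting — whose continuation is finest for all nearby systems — and by noting that the perturbations are small enough and localized enough that the pre-existing domination between distinct blocks, being an open robust condition by properties (H), (E), (T) of a dominated splitting, persists; I would follow that template, using condition (c) of Definition~\ref{rlf} (equality of the derivative on the boundary of the flowbox, hence continuity of the perturbed linear Poincar\'e flow) to preserve the invariant bundles and their transversality. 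The remaining bookkeeping — normalizing the non-unimodular cocycle $P_X^t|_{N^i}$, translating partial-sum decay into exponent-gap decay, and the finite-iteration argument — is routine given Lemmas~\ref{local} and~\ref{continuous}.
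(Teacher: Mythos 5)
There are genuine gaps in your plan, and they stem from trying to do strictly more than the lemma requires. First, the machinery you invoke (Proposition~\ref{exchange}, Lemma~\ref{local}, Lemma~\ref{continuous}) is developed in the paper only for the full linear Poincar\'e flow on $N$, not for the restricted cocycle $P_X^t|_{N^i}$; your reduction to a ``normalized'' sub-cocycle would have to be built from scratch and is in fact unnecessary, because the relevant lack of domination already holds for the full flow: if $j$ is an interior index of the block $N^i$ of the finest splitting and $Y$ is ergodic (so $\mu$-a.e.\ orbit is dense), an index-$j$ dominated splitting over the closure of a positive measure set of orbits would extend to all of $M$ and refine the finest splitting, a contradiction; hence $\Gamma_j(Y,\infty)$ has full measure and $J_j(Y)=\lambda_j(Y)-\lambda_{j+1}(Y)$, so the full-bundle Lemma~\ref{continuous} applies directly. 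Second, your claim that condition (c) of Definition~\ref{rlf} guarantees the perturbation ``does not destroy the splitting $N^1\oplus\cdots\oplus N^k$'' is not what that condition says: it only localizes the support and fixes the derivative on the entry and exit sections; the invariant bundles of the perturbed field over the perturbed orbits do change, and persistence of the block structure for nearby fields is exactly what the \emph{stably finest} hypothesis is for, not Definition~\ref{rlf}. Third, the ``iterate finitely'' step is shaky as stated: each perturbation controls only one $\Sigma_j$ and may worsen gaps at other indices, the number of steps and the total $C^1$-displacement (needed to stay inside the stably ergodic, stably finest neighborhood) are not controlled, and the opening ``suppose for contradiction that $\lambda_j(X)-\lambda_{j+1}(X)\ge\delta$'' is misdirected, since the lemma asserts nothing about $X$ itself.

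The paper's proof avoids all of this by never actually performing a perturbation whose splitting must be tracked. It takes $Y\in\mathcal{SE}^1\cap\mathcal{C}$ arbitrarily $C^1$-close to $X$, where $\mathcal{C}$ is the residual set of continuity points of the upper semicontinuous functional $\Sigma_j$. For such $Y$, the finest-splitting argument above gives $J_j(Y)=\lambda_j(Y)-\lambda_{j+1}(Y)$, and Lemma~\ref{continuous} provides, for every $\epsilon>0$, some $Z$ that is $\epsilon$-close with $\Sigma_j(Z)<\Sigma_j(Y)-2J_j(Y)+\delta$; continuity of $\Sigma_j$ at $Y$ forces $|\Sigma_j(Z)-\Sigma_j(Y)|<\delta$ for $\epsilon$ small, whence $\lambda_j(Y)-\lambda_{j+1}(Y)<\delta$. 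So the potential decrease is used purely as a counterfactual to bound the gap of $Y$ itself; there is no restricted cocycle, no need to preserve the splitting under perturbation, and no iteration. If you want to salvage your route, the essential missing idea is precisely this continuity-point argument; the rest of your outline should be replaced by the observation that interior indices of blocks of the finest splitting carry no domination $\mu$-almost everywhere for the full linear Poincar\'e flow.
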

\begin{proof}
Since $\Sigma_j(\cdot)$ is an upper semicontinuous function, the set $\mathcal{C}$ of its continuity points is a residual set. By Baire theorem $\mathcal{C}$ is also dense. Let $Y \in \mathcal{SE}^{1}\cap\mathcal{C}$ be a vector field arbitrarily $C^1$-close to $X$ and let $\Lambda(j,Y)$ be a set of points such that there exists a dominated splitting $N=N^1(Y)\oplus N^2(Y)$ over the closure of $\{Y^{t}(x)\}_{t\in\mathbb{R}}$ and $\dim(N^1)=j$. Since we assume that $\lambda_j(Y)$ and $\lambda_{j+1}(Y)$ are associated to the subbundle $N^i(Y)$ and $Y$ has a finest dominated splitting, we get that $\mu(\Lambda(j,Y))=0$.

By Lemma~\ref{continuous} and using the ergodicity hypothesis, we conclude the following. 
Given any $\epsilon,\delta>0$, and $j \in \{1,\dots,d-2\}$, there exists $Z\in{\mathfrak{X}^{1}_{\mu}(M)}$, $\epsilon$-$C^1$-close to $Y$, such that 
$$ \Sigma_j(Z)<\Sigma_j(Y)- 2J_j(Y)+\delta=\Sigma_j(Y)-2(\lambda_j(Y)-\lambda_{j+1}(Y))+\delta, $$
because
\begin{eqnarray*}
J_j(Y)&=&\int_{\Gamma_j(Y,\infty)}\lambda_j(Y,p)-\lambda_{j+1}(Y,p)d\mu(p)\\
&=&\int_{M\setminus \Lambda(j,Y)}\lambda_j(Y,p)-\lambda_{j+1}(Y,p)d\mu(p) \\
&=& \lambda_j(Y)-\lambda_{j+1}(Y).
\end{eqnarray*}

Noting that $Y\in\mathcal{C}$ we can decrease $\epsilon$ if necessary and obtain that $|\Sigma_k(Z)-\Sigma_k(Y)|<\delta$. Hence, 
$$|\lambda_k(Y)-\lambda_{k+1}(Y)|<\delta,$$
and the lemma is proved.
\end{proof}

We are ready to give the proof of Theorem~\ref{T2}:\\

\emph{Openess}; Let $\mathcal{U}$ be the set of points $X\in\mathcal{SE}^1$ such that $X$ has dominated splitting $N^u\oplus N^s$ where $\dim(N^u)=j$, $\lambda_j(X)>0$ is the lowerest exponent in $N^u$ and $\lambda_{j+1}(X)<0$ is the largest  exponent in $N^s$. It is clear that any $Y\in \mathfrak{X}^{1}_{\mu}(M)$, arbitrarily close to $X$, has a dominated splitting $N^u(Y)\oplus N^s(Y)$. Moreover, since the function that gives the largest exponent $\lambda_{j+1}(\cdot)$ (in $N^s$) defined by $$Y\mapsto\inf_{n\in \mathbb{N}}\int_M \log\|P_Y^1(p)|_{N^{s}(Y,x)}\|^{1/n}$$ is upper semicontinuous, we obtain that $\lambda_{j+1}(Y)$ cannot increase ab\-rup\-tly. Therefore, if $Y$ is close enough to $X$, we have that $\lambda_{j+1}(Y)<0$. In the same way the function that gives the lowerest exponent $\lambda_{j}(\cdot)$ in $N^u$ and is defined by $$Y\mapsto\sup_{n\in \mathbb{N}}\int_M \log\mathfrak{m}(P_Y^1(p)|_{N^{u}(Y,x)})^{1/n}$$ is lower semicontinuous. Hence,  as $\lambda_{j}(Y)$ cannot decrease abruptly, we have $\lambda_{j}(Y)>0$ and we obtain that $\mathcal{U}$ is open.

\emph{Denseness}; Let $X\in\mathcal{SE}^1$. We want to prove that $X$ can be $C^1$-approximated by a vector field in $\mathcal{U}$. First we choose $X_1 \in \mathcal{DSE}$ arbitrarily close to $X$.  $X_1$ has a dominated splitting and by a small perturbation we obtain a vector field $X_2$ having a dominated splitting which is stably finest. By Theorem~\ref{removing}, $X_2$ may be approximated, in the $C^{1}$-topology, by $X_3 \in \mathfrak{X}^{2}_{\mu}(M)$ for which $\Sigma^i(X_3) \not=0$. Using Lemma~\ref{step3} we can guarantee that, for $Y$ close to $X_3$, we have all the exponents in $N^i$ very close one from the others. Since their sum is nonzero we conclude that they are all different from zero and have the same sign; hence $Y$ is a nonuniformly hyperbolic vector field and $Y \in \mathcal{U}$.

\end{section}

\begin{section}{Proof of Theorem~\ref{T3} and of Corollary~\ref{C1}}

We begin by recalling  the Pugh and Robinson $C^1$-Closing Lemma adapted to the setting of incompressible flows (see ~\cite{PR}).

The $X^t$-orbit of a recurrent point $x$ can be approximated, for a very long time $T>0$, by a periodic orbit of a $C^1$-close flow $Y$: given $r,T>0$ we can find a
$\epsilon$-$C^1$-neigh\-bor\-hood $\mathcal{U}$ of $X$ in
$\mathfrak{X}^1_\mu(M)$, a vector field $Y\in\mathcal{U}$, a periodic
orbit $p$ of $Y$ with period $\ell$ and a map
$\tau \colon [0,T]\to[0,\ell]$ close to the identity such that
 \begin{itemize}
 \item $dist\big(X^t(x),Y^{\tau(t)}(p)\big)<r$ for all
   $0\leq t\leq T$;
 \item $Y=X$ over $M\setminus\bigcup_{0\le t\le \ell}\big(
   B(p,r)\cap B(Y^t(p),r)\big)$.
 \end{itemize}
 
 Now, to prove Theorem~\ref{T3} we fix $X \in \mathcal{E}$ and observe that if the set $\mathcal{Z}$, given by Theorem~\ref{T1}, has zero Lebesgue measure then the result follows taking $N=\mathcal{D}$. So, let us assume by contradiction that the associated $\mathcal{Z}$ has positive Lebesgue measure. 
 
 By the hypothesis of Theorem~\ref{T3} there exists $\epsilon>0$ such  that any $Y \in \mathfrak{X}^1_\mu(M)$ whose $C^1$-distance of $X$ is less than $\epsilon$ has no elliptic points.
 
 By the Poincar\'e Recurrence Theorem there exists a full measure subset of $\mathcal{Z}$, $\mathcal{Z}^\prime$ consisting of recurrent points. Pick $x_0 \in \mathcal{Z}^\prime$; given $\delta>0$, as $$\lim_{t \rightarrow \pm \infty} \frac{1}{t} \log\|P_X^t(x_0)\|=0,$$ there exists $T_\delta$ such that $$\frac{1}{t} \log\|P_X^t(x_0)\|<\delta,$$ for any $|t|>T_0$. Now, if $\delta$ is small enough, applying Pugh and Robinson's $C^1$-Closing Lemma there exists $X_1 \in \mathfrak{X}^1_\mu(M)$, $\frac{\epsilon}{3}$-$C^1$-close to $X$, such that $x_0$ is a periodic orbit of $X_1$ of period $\pi_1 >T_0$, and $$\frac{1}{\pi_1} \log\|P_{X_1}^{\pi_1}(x_0)\|<2\delta.$$ By a theorem of Zuppa (\cite{Z}), and shrinking $\delta$ if necessary, there exists $X_2 \in \mathfrak{X}^4_\mu(M)$, $\frac{\epsilon}{3}$-$C^1$-close to $X_1$, such that $x_0$ is a periodic orbit of $X_2$ of period $\pi_2$ close to $\pi_1$, hence greater than $T_0$, and $$\frac{1}{\pi_2} \log\|P_{X_2}^{\pi_2}(x_0)\|<2\delta.$$ 
 
Finally, shrinking the initial $\delta$ once again if necessary, we can apply \cite[Lemma 3.2]{BR2} to obtain a vector field $X_3 \in \mathfrak{X}^1_\mu(M)$, $\frac{\epsilon}{3}$-$C^1$-close to $X_2$, hence $\epsilon$ close to $X$, such that $x_0$ is a periodic orbit of $X_3$ of period $\pi_2$, and $\frac{1}{\pi_2} \log\|P_{X_2}^{\pi_2}(x_0)\|=0$. Moreover this last perturbation can be done in such a way that $x_0$ is an elliptic point, which contradicts the hypothesis of Theorem~\ref{T3}.

\bigskip

Now, to prove Corollary~\ref{C1}, for any $k \in \mathbb{N}$ we consider the set $N_k$ consisting of orbits of $X$ that admit a $k$-dominated splitting. From Theorem~\ref{T3} it follows that $N=\cup_{k=1}^\infty {N_k}$ is a full measure set. Therefore, given any $\epsilon>0$, there is $\ell \in \mathbb{N}$ such that $\mu(\cup_{i=1}^\ell N_i) > 1-\epsilon$. Using elementary properties of dominated splittings (see for example \cite{BDV}) we conclude that there exists $\ell_1$ with $N_{\ell_1} \supset \cup_{i=1}^\ell N_i$. Therefore, as $Sing(X)=\emptyset$, it follows that the set $\overline{N_{\ell_1}}$ is compact, invariant, admits a dominated splitting and has measure greater then $1-\epsilon$, which ends the proof of the corollary.
\end{section}

\begin{section}{Main perturbation lemmas} \label{MPL}

\begin{subsection}{Local perturbations}\label{few} We start by proving a key tool that, in broad terms, assures that a time-one linear map, obtained by composing a small rotation with the linear Poincar\'e flow of a vector field, is realizable.

The next lemma, proved in ~\cite{BR} (Lemma 2.1), together with the Arbieto and Matheus Pasting Lemma (\cite{AM}) and Zuppa's Theorem (\cite{Z}), will be crucial to perform this construction.

\begin{lemma}\label{cfbt}
Given a vector field $X\in\mathfrak{X}_{\mu}^{2}(M)$ a
non-periodic point $p\in{M}$ and $t_0 \in \mathbb{R}^+$, there exists a volume-preserving $C^{2}$
diffeomorphism $\Psi$ defined in a neighborhood of the arc $\{X^t(p);\,t \in [0,t_0]\}$ such that $T=\Psi_{*}X$, where $T= \frac{\partial}{\partial x_1}$.
\end{lemma}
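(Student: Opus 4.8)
The statement is a volume-preserving strengthening of the classical tubular flow-box (``straightening'') theorem, and the plan is to first straighten $X$ by an ordinary flow-box chart and then repair the resulting volume density by a second diffeomorphism that does not re-curve the straightened field. First I would fix a chart $(\alpha_i,U_i)$ of the Moser atlas $\mathcal{A}$ containing the arc $A=\{X^t(p):t\in[0,t_0]\}$, so that in this chart $\omega=dx_1\wedge\cdots\wedge dx_d$, and, after a linear volume-preserving change of coordinates, assume that $X(p)$ points in the $x_1$-direction; then pick a $C^\infty$ embedded $(d-1)$-disk $\sigma\colon\Sigma_0\to U_i$ transverse to $X$ at $p$. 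Since $X\in\mathfrak{X}^{2}_{\mu}(M)$ its flow is of class $C^2$, so $F_0(s,y)=X^s(\sigma(y))$ is a $C^2$ immersion on $(-\eta,t_0+\eta)\times\Sigma_0$; because $p$ is non-periodic the arc $A$ is embedded, and a standard shrinking argument using the compactness of $A$ makes $F_0$ injective once $\eta$ and $\Sigma_0$ are small enough, hence a $C^2$ diffeomorphism onto a tubular neighbourhood $W$ of $A$. Its inverse $\Phi_0:=F_0^{-1}$ then satisfies $(\Phi_0)_*X=\partial/\partial x_1$.

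Next I would write $(\Phi_0)_*\omega=\rho\cdot dx_1\wedge\cdots\wedge dx_d$ with $\rho>0$ and identify $\rho$ explicitly: using that each time-$s$ map of $X$ preserves $\omega$ and that $X$ is carried to itself by its flow, one finds that $\rho(s,y)$ equals the value of $\sigma^*(\iota_X\omega)$ on the standard frame of $\Sigma_0$. In particular $\rho$ does not depend on $s$, so $\rho=\rho(y)$, and $\rho$ is in fact of class $C^2$ because $\iota_X\omega$ has $C^2$ coefficients and $\sigma$ is $C^\infty$ — so no regularity is lost at this stage. Then, by Moser's theorem applied on the disk $\Sigma_0$ (see \cite{Mo}), after normalising the two total masses by an inessential rescaling of $\Sigma_0$, there is a $C^2$ diffeomorphism $g$ of $\Sigma_0$ with $g^*(dx_2\wedge\cdots\wedge dx_d)=\rho\cdot(dx_2\wedge\cdots\wedge dx_d)$; note that the time-one map of the time-dependent $C^2$ vector field produced by the Moser homotopy is itself $C^2$. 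Set $\Theta(x_1,y):=(x_1,g(y))$.

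Finally I would take $\Psi:=\Theta\circ\Phi_0$. It is $C^2$ as a composition of $C^2$ diffeomorphisms; it is volume-preserving, since $\Psi_*\omega=\Theta_*(\rho\cdot dx_1\wedge\cdots\wedge dx_d)=dx_1\wedge\cdots\wedge dx_d$; and $\Psi_*X=\Theta_*(\partial/\partial x_1)=\partial/\partial x_1=T$, because $\Theta$ leaves the first coordinate fixed. This gives the asserted $\Psi$. The one point that truly requires care is the compatibility of the two steps: the volume-correcting diffeomorphism must not re-curve the already straightened field, which forces it to have the skew-product form $(x_1,y)\mapsto(x_1,g(y))$ over the transversal, and hence forces the correction to be performed entirely on $\Sigma_0$; this is precisely where one uses that the straightened density depends only on $y$ and is $C^2$ there. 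The remaining ingredients — injectivity of the flow-box, guaranteed by the non-periodicity of $p$ and the compactness of $A$, and the mass-matching needed to invoke Moser's theorem — are routine. (This lemma is \cite[Lemma 2.1]{BR}.)
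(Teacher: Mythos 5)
Your proof is correct and follows what is essentially the intended route: the paper itself gives no argument for this lemma (it only cites \cite{BR}, Lemma 2.1), and the proof there is the same standard two-step scheme you use — straighten $X$ by the flow-box map $F_0(s,y)=X^s(\sigma(y))$ over a transversal (injective by non-periodicity and compactness of the arc), observe that $F_0^*\omega=\rho(y)\,dx_1\wedge\cdots\wedge dx_d$ with $\rho$ independent of the flow coordinate and of class $C^2$, and then correct the density by a fibered map $(x_1,y)\mapsto(x_1,g(y))$ with $\det Dg=\rho$ obtained from Moser's argument on the section. The only blemish is your opening assumption that a single Moser chart contains the whole arc $\{X^t(p):t\in[0,t_0]\}$, which need not hold for large $t_0$ but is also unnecessary: the pullback $F_0^*\omega$ of the global form $\omega$ and the identification of $\rho$ with the density of $\sigma^*(\iota_X\omega)$ only require a chart near $p$, so the argument goes through verbatim with that hypothesis dropped.
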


We fix $X\in{\mathfrak{X}^{1}_{\mu}(M)}$ and we choose a non-periodic point $p$. Let $V_p$ be a two-dimensional subspace of $N_p$; given $\xi \in \mathbb{R}$ let $\mathfrak{R}_\xi \colon V_p \rightarrow V_p$ denote the rotation of angle $\xi$. Let $V_{p}^{\perp}$ denote the orthogonal complement of $V_p$ in $N_p$ and define $R_\xi \colon N_p \rightarrow N_p$ as $R_\xi=\mathfrak{R}_\xi \oplus Id$, that is $R_\xi(v+v^{\perp})=\mathfrak{R}_{\xi}(v)+v^{\perp}$, where $v\in V_{p}$ and $v^{\perp}\in V_{p}^{\perp}$.

\begin{lemma}\label{rot1}
Given $X\in{\mathfrak{X}^{1}_{\mu}(M)}$, $\epsilon>0$ and $\kappa \in (0,1)$, there exists $\xi_{0}>0$ such that for  any $\xi\in(0,\xi_{0})$, any $p\in{X}$ (non-periodic or with period larger than one) and any two-dimensional vector space $V_p \subset N_{p}$
one has that  the time-one map $L_1=P_{X}^{1}(p)\circ R_\xi$ is an $(\epsilon,\kappa)$-realizable linear flow of length $1$ at $p$, where $R_\xi=\mathfrak{R}_\xi \oplus Id$ and  $\mathfrak{R}_\xi$ is the rotation of angle $\xi$ in $V_p$.
\end{lemma}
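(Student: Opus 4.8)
The plan is to realize the rotated map $L_1 = P_X^1(p) \circ R_\xi$ by constructing a divergence-free vector field $Y$, $C^1$-close to $X$, whose flow inserts a small rotation inside a flowbox along the arc $\{X^t(p) : t \in [0,1]\}$ while leaving everything outside untouched. First I would reduce to the smooth case: since we only need $C^1$-closeness of $Y$ to $X$, and since $\mathfrak{X}^2_\mu(M)$ (indeed $C^s$ divergence-free fields, by Zuppa's Theorem~\cite{Z}) is $C^1$-dense, I may assume $X \in \mathfrak{X}^2_\mu(M)$, perturbing first by less than $\epsilon/2$ and working thereafter with a budget of $\epsilon/2$. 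By Remark~\ref{vitali} it suffices to handle open sets of the form $U = B(p',r') \subseteq B(p,r)$. Then I would apply Lemma~\ref{cfbt} to obtain a volume-preserving $C^2$ diffeomorphism $\Psi$, defined on a neighborhood of the arc, straightening $X$ to the constant field $T = \partial/\partial x_1$. In these coordinates the Poincaré map of $X$ between the sections $\{x_1 = 0\}$ and $\{x_1 = 1\}$ is (up to the identification via $\Psi$) a translation, and the linear Poincaré flow is essentially the identity after accounting for the coordinate change; the normal bundle becomes the constant $(d-1)$-plane $\{x_1 = \text{const}\}$, and $V_p$ becomes a fixed $2$-plane inside it.

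Next, in the straightened coordinates, I would define a modified vector field by composing the flow direction $\partial/\partial x_1$ with an infinitesimal rotation in the $V_p$-directions, turned on by a bump function. Concretely, let $\theta(x_1)$ be a smooth function supported in $(0,1)$ with $\int_0^1 \theta = \xi$ and $\|\theta\|_{C^0}, \|\theta'\|_{C^0}$ controlled by $C\xi$ (a fixed geometric constant $C$ depending on the atlas $\mathcal{A}$ and on $r$), and also cut off in the transverse directions by a bump $\phi$ equal to $1$ on a slightly shrunk copy of $\Psi(U)$ and supported in $\Psi(B(p,r))$. In coordinates $(x_1, y)$ with $y \in \R^{d-1}$, write $y = (y^V, y^\perp)$ and set the new field to be $\partial/\partial x_1 + \theta(x_1)\phi(y)\, \mathfrak{r}(y^V)$, where $\mathfrak{r}$ is the linear vector field on $V_p$ generating the rotation group $\xi \mapsto \mathfrak{R}_\xi$ (i.e. $\mathfrak{r}(y^V) = J y^V$ with $J$ the standard complex structure on the $2$-plane). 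This field is divergence-free because $\partial/\partial x_1$ is, because the rotational part $\mathfrak{r}$ is divergence-free in the $y^V$-variables, and because $\theta\phi$ depends only on variables transverse to $\mathfrak{r}$'s flow directions in a way that keeps the total divergence zero — more carefully one checks $\mathrm{div}(\theta(x_1)\phi(y) J y^V) = \theta(x_1)\langle \nabla_{y^V}\phi, Jy^V\rangle + \theta\phi\,\mathrm{div}(Jy^V) = \theta(x_1)\langle \nabla_{y^V}\phi, Jy^V\rangle$, which vanishes on the region where $\phi \equiv 1$; to kill the residual divergence near the boundary of the support one either chooses $\phi$ radial in $y^V$ (so $\nabla_{y^V}\phi \parallel y^V \perp Jy^V$) or corrects by a divergence-free completion. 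Pulling back by $\Psi^{-1}$ (volume-preserving, $C^2$) and gluing to $X$ via the Arbieto–Matheus Pasting Lemma~\cite{AM} gives $Y \in \mathfrak{X}^1_\mu(M)$ with $Y = X$ outside $\mathcal{F}^1_X(p)(U)$ and $\|Y - X\|_{C^1} \le C\xi$; choosing $\xi_0$ so that $C\xi_0 < \epsilon/2$ (total $<\epsilon$) secures (b) and (c) of Definition~\ref{rlf}.

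For (a) and (d): on the set where $\phi \equiv 1$ — which is a slightly shrunk copy of $U$ having $\overline{\mu}$-measure at least $(1-\kappa)\overline{\mu}(U)$ once $r$ (hence $U$) is small relative to the thickness of the cutoff collar, using Lemma~\ref{map} to control the distortion of $\overline{\mu}$ under the Poincaré map — the time-one Poincaré map of $Y$ is exactly $P_X^1(p')$ precomposed with the rotation by $\int_0^1 \theta = \xi$ in $V$, for the basepoint $p' = $ center of $U$. For a general point $q$ in this good set $K$, the corresponding map differs from $L_1 = P_X^1(p)\circ R_\xi$ only by the variation of $DX^1$ over $B(p,r)$ (which is $<\gamma/2$ for $r$ small, by continuity of $X \in C^1$ and the definition~(\ref{metrics}) of the distance between Poincaré maps at nearby basepoints) plus the variation of the inserted rotation angle over the $y^\perp$-directions (which we arranged to be $<\gamma/2$); hence $\|P^1_Y(q) - L_1\| < \gamma$, giving (d). The main obstacle is the bookkeeping in the last step: ensuring simultaneously that (i) the inserted rotation is genuinely constant (equal to the prescribed $R_\xi$) on a large-measure subset $K \subseteq U$, (ii) the perturbation stays divergence-free without an uncontrolled correction term, and (iii) all the errors — from $\Psi$ not being linear, from $DX^1$ varying over the flowbox, from the $\overline{\mu}$-versus-$\hat{\mu}$ distortion in Lemma~\ref{map}, and from the collar of the cutoff $\phi$ — are absorbed into the single parameter $r$ after $\xi_0$ is fixed. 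These are all of the same flavor as the estimates already carried out in~\cite{B0} and~\cite{B1}, so I expect no conceptual difficulty, only careful tracking of constants.
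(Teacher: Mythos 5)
Your overall architecture is the same as the paper's (Zuppa smoothing, rectification via Lemma~\ref{cfbt}, a rotational perturbation supported in a flowbox, gluing by the Arbieto--Matheus Pasting Lemma, then measure estimates via Lemma~\ref{map}); the difference is that the paper delegates the core perturbation to Lemma~2.2 of~\cite{BR}, while you build it by hand, and it is in that hand-built part that two genuine gaps appear. First, you smooth $X$ \emph{globally} to some $X_1\in\mathfrak{X}^2_\mu(M)$ at a fixed size $\epsilon/2$, independent of $\gamma$. But condition (c) of Definition~\ref{rlf} requires $Y^t=X^t$ outside $\mathcal{F}^1_X(p)(U)$ \emph{and} $DX_q=DY_q$ on $U\cup\mathcal{P}^1_X(p)(U)$: with your construction the field near the entry and exit sections is $X_1$ (or the pasted interpolation), not $X$, so the endpoint derivative condition fails --- and this condition is exactly what makes concatenation (Remark~\ref{conc}) work, so it is not cosmetic. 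Worse, for condition (d) the map you actually realize on $K$ is (up to the rotation) the linear Poincar\'e flow of $X_1$, and $\|P^1_{X_1}(q)-P^1_X(p)\|$ is of the order of the $C^1$-distance between $X_1$ and $X$, i.e.\ of order $\epsilon$; it does not shrink with $r$, so your error budget $\gamma/2+\gamma/2$ omits a term of fixed size and (d) cannot hold for $\gamma\ll\epsilon$. The paper avoids both problems by smoothing only on a tubular neighborhood of the sub-arc $\{X^t(p):t\in[\delta,1-\delta]\}$, leaving $X$ untouched near $p$ and $X^1(p)$, and the smoothing (a density statement) can be taken as fine as $\gamma$ demands, since only $\xi_0$, not $Y$, must be independent of $\gamma$.

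Second, the rotation you insert is a Euclidean rotation in the rectified chart; pulled back by $D\Psi$ it is a conjugated (elliptic) rotation of $V_p$ with respect to the norm of Subsection~\ref{lc}, not the prescribed $R_\xi=\mathfrak{R}_\xi\oplus Id$, and the discrepancy is of size comparable to $\xi$ times the distortion of $D\Psi$, again not controlled by $r$ or $\gamma$. This is repairable --- normalize $\Psi$ at the base point by a constant volume-preserving linear map fixing $\partial/\partial x_1$, or run the elliptic-rotation machinery of Lemma~\ref{elliptic} and \cite[Lemma~3.4]{BV}, choosing the cutoff constant on the invariant ellipses so that divergence-freeness survives --- but it must be addressed, and it is precisely the content that the citation of \cite[Lemma~2.2]{BR} supplies in the paper's proof. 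Two smaller slips of the same kind: your cutoff $\phi$ is said to be supported in $\Psi(B(p,r))$, which would violate the support clause of (c) (it must be supported in $\Psi(U)$), and the radial-in-$y^V$ trick for killing the divergence requires the rotation to be centered at the center of $U=B(p',r')$, not at the rectified orbit of $p$; both are easy to fix but as written the verification of (a)--(d) does not close.
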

\begin{proof}
Fix $\epsilon$, $\kappa$, $X$, and take $\xi_0$ to be fixed latter.
Let $p \in M$ be a non-periodic point or a periodic point with period larger than one, and let $V_p$ be a two-dimensional subspace of $N_p$.

The first step consists in approximate the vector field $X$ by another one $\tilde{X} \in{\mathfrak{X}^{1}_{\mu}(M)}$ but of class $C^2$ on a small tubular neighborhood $\mathcal{V}$ of $\{X^t(p);\, t \in [\delta, 1-\delta]\}$, for some small $\delta$. Moreover, $\tilde{X}$ is rectificable in $\mathcal{V}$ and coincides with $X$ outside a small tubular neighborhood $\mathcal{W} \supset \mathcal{V}$. 

To obtain  $\tilde{X}$ we first observe that, by Zuppa's result (\cite{Z}) the set of $C^2$ divergence-free vector fields is $C^1$ dense in ${\mathfrak{X}^{1}_{\mu}(M)}$ therefore we begin by choosing a $C^2$ vector field, $X_1$, which is $\frac{\epsilon}{3}$-$C^1$-close to $X$. Now, Arbieto and Matheus Pasting Lemma (\cite{AM}) allows to get a $C^1$ vector field , $X_2$, which is $\frac{\epsilon}{3}$- $C^1$-close to $X_1$, that coincides with $X_1$ in a tubular neighborhood $\mathcal{V}$ of $\{X^t(p);\, t \in [\delta, 1-\delta]\}$ and coincides with the initial vector field $X$ outside a small tubular neighborhood $\mathcal{W}$ of $\{X^t(p);\, t \in [\frac{\delta}{2}, 1-\frac{\delta}{2}]\}$, $\mathcal{W} \supset \mathcal{V}$, such that $p$ and $X^1(p)$ do not belong to $\mathcal{W}$, for small $\delta>0$. Observe that $X$ and $X_2$ are $\frac{2\epsilon}{3}$-$C^1$-close. As $X_2$ is of class $C^2$ in $\mathcal{V}$ Lemma ~\ref{cfbt} assures that $X_2|_{\mathcal{V}}$ is $C^2$ rectificable by a change of coordinates $\Psi$; therefore we take $\tilde{X}=X_2$. 

Next step is to get $r>0$ as a function of $X, \epsilon, \kappa, \gamma$ and $p$ and then, for each $U \subset B(p,r) \subset N_p$,  perturb $X_2$ in $\mathcal{V}$ in order to obtain $K \subset U$ and $Y$ as in Definition ~\ref{rlf}. The way we get $r$ is by shrinking step by step its value along the proof.

Let $\mathcal{N}_{\tilde{X}^\delta(p)}=\Psi^{-1}(B_0)$ and $\mathcal{N}_{\tilde{X}^{t}(p)}=\tilde{X}^{t}(\mathcal{N}_{\tilde{X}^\delta(p)})$, for $t \in [-\delta, 1 -\delta]$, where $B_0$ is a ball centered at $0 \in \mathbb{R}^d$ and contained in $\{0\} \times \mathbb{R}^{d-1}$.

According to Remark~\ref{vitali} we fix any ball $U=B(p^\prime,r^\prime) \subset B(p,r) \subset \mathcal{N}_{p}$. We observe that if we prove the lemma for this particular normal section then the general case follows just by shrinking $r$.

Define $U_\delta=\{\tilde{X}^{\delta}(x); x \in U\}$. We note that, for small $r$ and $\delta$, $U_\delta$ contains a ball $\overline{B}=B(\overline{p},\overline{r})$ such that $\overline{\mu}(U_\delta \setminus \overline{B})>1-\frac{\kappa}{2}$. We let $\overline{K}=\overline{B}$ and $V_{\overline{p}}=P^{\delta}_{\tilde{X}}(p^\prime)(V_{p^\prime})$, where $V_{p^\prime}$ is the  parallel transport of $V_p$ to $p^\prime$. In Figure ~\ref{f1} we illustrate the sets we are considering.

Now Lemma 2.2 of ~\cite{BR}, which is based on Lemma ~\ref{cfbt}, guaranties the existence of a vector field $Y\in\mathfrak{X}_{\mu}^{1}(M)$
which satisfies:
\begin{enumerate}
\item [(i)] $Y$ is $\frac{\epsilon}{3}$-$C^1$-close to $\tilde{X}$;
\item [(ii)] $P_Y^{1-2\delta}(\overline{p})_{|{W_{\overline{p}}}}$ is the identity, where $W_{\overline{p}}$ is the orthogonal complement of $V_{\overline{p}}$ in $N_{\overline{p}}$;
\item [(iii)] $P_Y^{1-2\delta}(\overline{p})\cdot v=P_{\tilde{X}}^{1-2\delta}(\overline{p}) \circ R_{\xi} \cdot v$, $\forall v \in V_{\overline{p}}$, where $R_{\xi}$ is the rotation of angle $\xi$ on $V_{\overline{p}}$;
\item [(iv)] $Y=\tilde{X}$ outside the flowbox $\tilde{X}^{[\delta,1-2\delta]}(\overline{B})$.
\end{enumerate}

\begin{figure}[h]
\begin{center}
  \includegraphics[width=12cm,height=6cm]{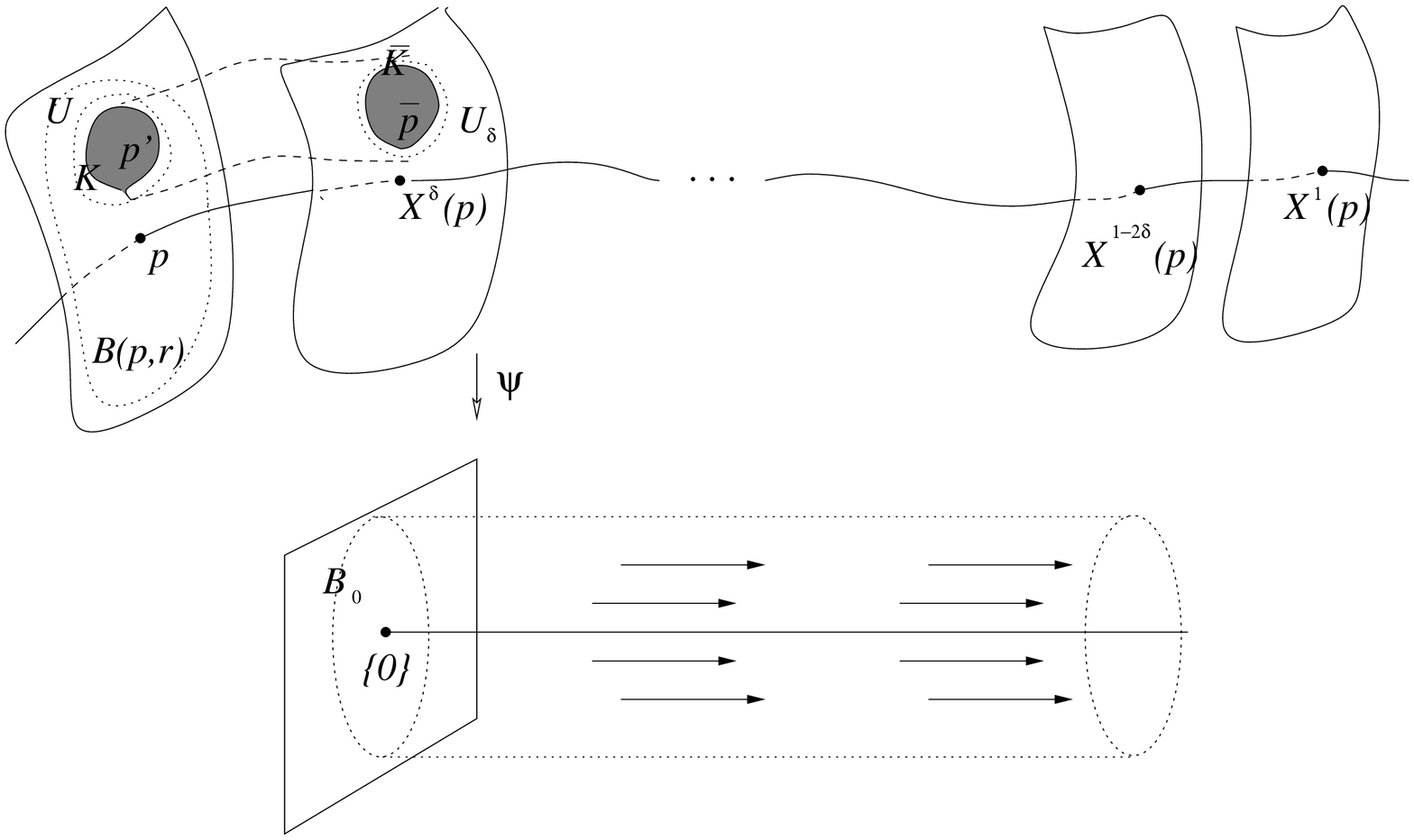}
\caption{}
\label{f1}
\end{center}
\end{figure}

Notice that condition (i) completely determines $\xi_0$.

Now we let $K=\{\tilde{X}^{-\delta}(x);\,x \in \overline{K} \}$; observe that if $r$ is small enough (and consequently $r^{\prime}$ is even smaller) then, as a consequence of Lemma ~\ref{map}, we get that $\overline{\mu}(U \setminus K)>1-\kappa$, which is condition (a) of the definition of realizable linear flow is satisfied. Also, by construction of $\tilde{X}$ and condition (i) above on $Y$, clearly imply (b) and (c) of Definition ~\ref{rlf}. 

$$P_ {Y}^1(p^\prime)=   P_ {\tilde{X}}^{\delta}(\tilde{X}^{1-\delta}(p^\prime)) \circ \left(P_{\tilde{X}}^{1-2\delta}(\tilde{X}^\delta(p^\prime)) \circ R_{\xi}\right) \circ P_ {\tilde{X}}^{\delta}(p^\prime).$$

If we take $\delta$  small enough then 
$$\|P_ {Y}^1(p^\prime)- \left(P_{\tilde{X}}^{1}(p^\prime) \circ R_{\xi}\right)\|<\frac{\gamma}{4}$$
Now, shrinking successively $r$, we get 
\begin{enumerate} 
\item $\|P_ {Y}^1(p^\prime)- \left(P_{{X}}^{1}(p^\prime) \circ R_{\xi}\right)\|<\frac{2\gamma}{4}$,
\item $\|P_ {Y}^1(p^\prime)- \left(P_{{X}}^{1}(p) \circ R_{\xi}\right)\|<\frac{3\gamma}{4}$,
\item $\|P_ {Y}^1(q)- \left(P_{{X}}^{1}(p) \circ R_{\xi}\right)\|<\gamma$, for any $q \in K$.
\end{enumerate}

\end{proof}

\begin{remark}\label{rot1b}
A completely analog proof of the previous lemma, this time considering the vector field $(-X)$ and a rotation of angle $(-\xi)$,  guaranties that the 
time-$1$ map $R_\xi\circ P_{X}^{1}(X^{-1}(p))$ is also $(\epsilon,\kappa)$-realizable linear flow of length $1$ at $X^{-1}(p)$, for small $\xi$. 
\end{remark}

If we want to prove that certain time-$2$ map is $(\epsilon, \kappa)$-realizable linear flow, which is a rotation in a fixed two-dimensional subspace $V$ of the normal space, we can try to do it by concatenating two $(\epsilon,\kappa_i)$- realizable linear flows being the first one a rotation on $V$ and the other one being an elliptical rotation on the image of $V$ by the time-one linear Poincar\'e flow. For that we need to adapt Lemma~\ref{rot1} for elliptical rotations. Note that, according to Remark~\ref{conc}, the resulting linear flow is $(\epsilon,\kappa_1+\kappa_2)$-realizable, and so the measure of the set $K$ (see Definition~\ref{rlf}) decreases. Therefore there is no hope to use directly this concatenation argument to prove that a  rotation of a given angle is $(\epsilon, \kappa)$-realizable linear flow of length $\ell$, for large $\ell$, although we need large time to get the desired rotation by a composition of rotations of small angle.
Next lemma below solves this problem.

\bigskip

Fix a regular point $p \in M$ and a two-dimensional subspace $V_p$ of $N_p$. A \emph{right cylinder} centered at $p$ is a subset of $N_p$ of the form $\mathcal{B} \oplus \mathcal{A}$, where $\mathcal{B}$ is called the basis of the cylinder and it is an ellipse contained in  $V_p$, and $\mathcal{A}$ is called the axis of the cylinder and it is a $(d-3)$-dimensional ellipsoid contained in the $V_p^\perp$, the orthogonal complement of $V_p$ in $N_p$ (see Figure~\ref{f2}).

\begin{figure}[h]
\begin{center}
  \includegraphics[width=8cm,height=4cm]{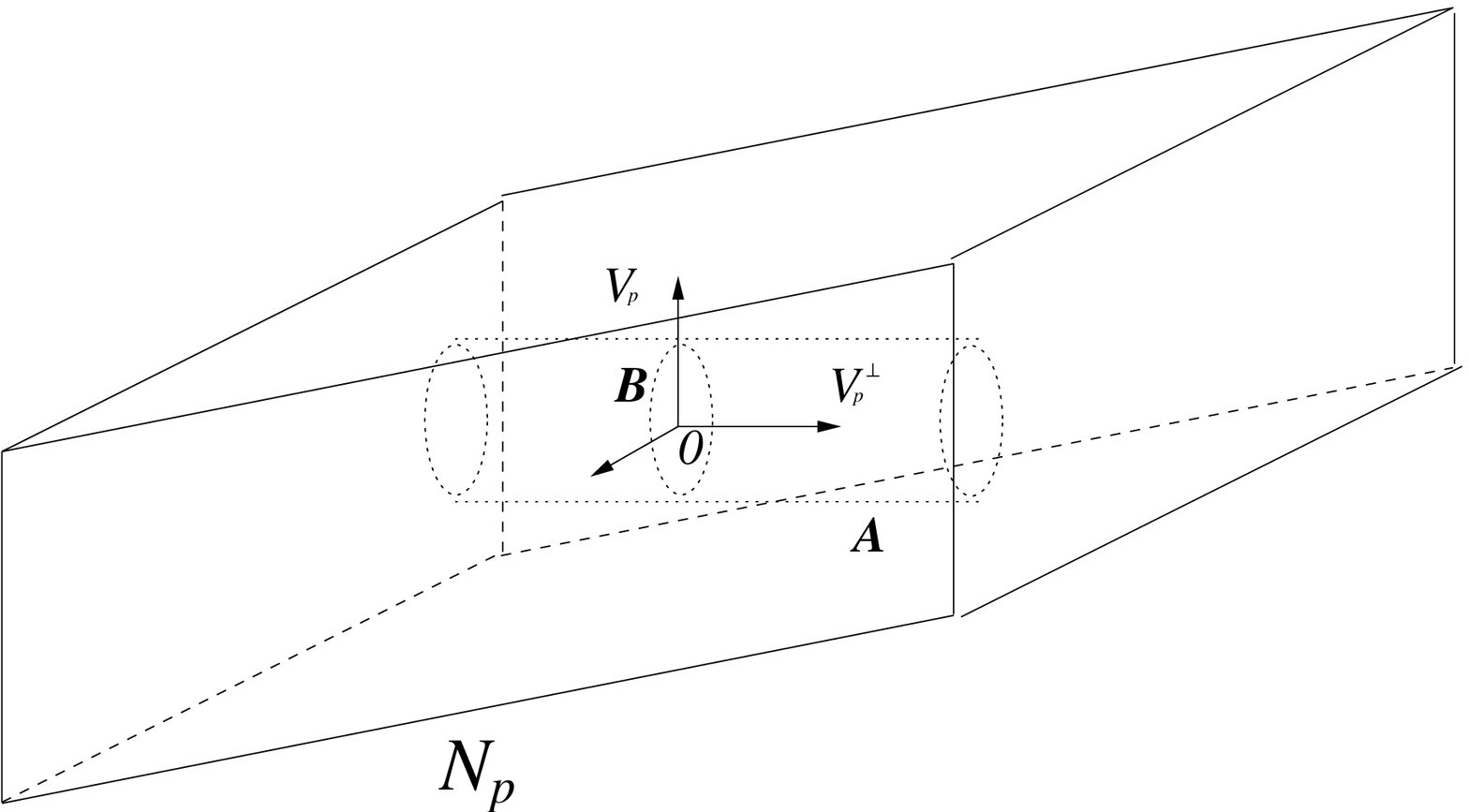}
\caption{}
\label{f2}
\end{center}
\end{figure}

Let $\mathcal{B} \oplus \mathcal{A}$ be a right cylinder centered at $p$. Let $h \colon V_p \rightarrow \mathbb{R}^2$ be an area-preserving map such that $h(\mathcal{B})$ is a disk. An elliptical rotation of angle $\theta$ of the cylinder is a map $R_\theta \colon \mathcal{B} \oplus \mathcal{A} \rightarrow \mathcal{B} \oplus \mathcal{A}$ of the form $\mathcal{E}_\theta \oplus Id$, where $h \circ \mathcal{E}_\theta \circ h^{-1}$ is a rotation of angle $\theta$.

Concerning the case of  elliptical rotations a direct adaptation of the proof of the previous lemma jointly with the strategy followed in ~\cite[Lemma 3.4]{BV} give the following result.
\begin{lemma}\label{elliptic}
Given $X \in \mathfrak{X}^1_\mu(M)$, $\epsilon>0$ and $\sigma \in ]0,1[$. There exists $\xi>0$ such that for any  $p \in M$ a non-periodic point or with period larger than one, the following holds: Let $\mathcal{B} \oplus \mathcal{A}$ be a right cylinder centered at $p$ and consider the elliptical rotation $R_\xi=\mathcal{E}_\xi \oplus Id$. For $a$, $b>0$, consider the cylinder $\mathcal{C}=b\mathcal{B} \oplus a\mathcal{A}$. There exists $\tau >1$ such that if $a \geq \tau b$ and $diam(\mathcal{C})<\epsilon$, then there exists $Y   \in \mathfrak{X}^1_\mu(M)$ satisfying
\begin{itemize}
\item  $Y=X$ outside $\mathcal{F}_X^1(p)(\hat{\mathcal{C}})$, where $\hat{\mathcal{C}}=\alpha(\mathcal{C})$ for some volume-preserving chart $\alpha$ (according to Section~\ref{lc});
\item  $\|P_Y^1(q)-P_X^1(q) \circ R_\xi\|\ll\xi$, for all $q \in \sigma \mathcal{C}$, and
\item $Y$ and $X$ are $\epsilon$-$C^1$-close.
\end{itemize}
\end{lemma}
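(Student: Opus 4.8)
The plan is to bootstrap Lemma~\ref{elliptic} from Lemma~\ref{rot1} (and its backward-time version Remark~\ref{rot1b}) by essentially repeating its proof with two modifications: replacing the round two-dimensional rotation $\mathfrak{R}_\xi$ by an elliptical rotation $\mathcal{E}_\xi$ supported on a thin right cylinder, and tracking explicitly that all the $C^1$-perturbation estimates degrade only by a factor controlled by the eccentricity $a/b$ of the cylinder. First I would, exactly as in Lemma~\ref{rot1}, use Zuppa's Theorem (\cite{Z}) and the Arbieto--Matheus Pasting Lemma (\cite{AM}) to replace $X$ by a vector field $\tilde{X}$ that is $C^2$ and rectifiable on a tubular neighborhood of a sub-arc $\{X^t(p):t\in[\delta,1-\delta]\}$, coinciding with $X$ near the endpoints $p$ and $X^1(p)$, and $\tfrac{2\epsilon}{3}$-$C^1$-close to $X$. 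In the rectifying chart $\Psi$ the flow is a constant vector field, and the cylinder $\mathcal{C}=b\mathcal{B}\oplus a\mathcal{A}$ is carried to a genuine right cylinder in $\{0\}\times\R^{d-1}$, on whose $V_p$-factor the elliptical rotation becomes, after the area-preserving conjugacy $h$, an honest rotation of a round disk.

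The heart of the matter is the analogue of Lemma~2.2 of \cite{BR}: constructing inside the rectified flowbox a divergence-free vector field $Y$, $\tfrac{\epsilon}{3}$-$C^1$-close to $\tilde X$, that is the identity on the $V_p^\perp$-directions, realizes $P^{1-2\delta}_{\tilde X}(\overline p)\circ R_\xi$ on $V_{\overline p}$, and equals $\tilde X$ outside the flowbox over the cylinder. The point is that the perturbation that produces a rotation by angle $\xi$ on the base ellipse requires a ``rotational'' modification of the vector field whose $C^1$-size scales like $\xi$ divided by the thinness of the support in the $\mathcal{A}$-direction; concretely, to turn the ellipse $b\mathcal B$ by $\xi$ one perturbs the flow on the slab $a\mathcal A$ of thickness $\sim a$, and the transverse derivative of the perturbation is of order $\xi\cdot b/a$ (this is precisely the mechanism of \cite[Lemma~3.4]{BV}, translated to the flow/normal-bundle setting via Lemma~\ref{cfbt}). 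Hence, given $\epsilon$ and $\sigma$, one first fixes $\xi$ so that the intended rotation of a round disk is $\tfrac{\epsilon}{3}$-$C^1$-realizable in the ideal model, and then chooses the eccentricity threshold $\tau$ so that whenever $a\ge\tau b$ the conjugation by $h$ (which distorts the $C^1$-norm by a factor comparable to $a/b$) still leaves the perturbation $\epsilon$-$C^1$-small; the condition $\operatorname{diam}(\mathcal C)<\epsilon$ guarantees the flowbox over $\hat{\mathcal C}$ fits in a single chart domain so that the norm conventions of Section~\ref{lc} apply. The restriction to $q\in\sigma\mathcal C$ (rather than all of $\mathcal C$) is what lets us ignore boundary-layer effects of the cutoff, exactly as the set $K$ was strictly smaller than $U$ in Lemma~\ref{rot1}.

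Finally I would assemble $P_Y^1(q)$ as the composition $P_{\tilde X}^{\delta}(\tilde X^{1-\delta}(q))\circ\big(P_{\tilde X}^{1-2\delta}(\tilde X^\delta(q))\circ R_\xi\big)\circ P_{\tilde X}^{\delta}(q)$ over $q\in\sigma\mathcal C$, and push $\delta\to 0$ together with $\operatorname{diam}(\mathcal C)\to 0$ to absorb the entrance/exit caps and the difference between $\tilde X$ and $X$, obtaining $\|P_Y^1(q)-P_X^1(q)\circ R_\xi\|\ll\xi$ uniformly on $\sigma\mathcal C$, with $Y=X$ outside $\mathcal F_X^1(p)(\hat{\mathcal C})$ and $Y$ $\epsilon$-$C^1$-close to $X$; backward time is handled as in Remark~\ref{rot1b}. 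I expect the main obstacle to be the bookkeeping of the eccentricity dependence: one must verify that the constant $\tau$ produced by the $a/b$-distortion of the conjugacy $h$ can be chosen uniformly over all admissible base points $p$ (using compactness of $M$ and of the finite atlas $\mathcal A$, and the fact that on regular points $\|X\|$ is bounded away from $0$), and that the divergence-free constraint can be maintained throughout the rescaling — this is where Lemma~\ref{cfbt} and the pasting/Zuppa machinery carry the load, making the rotational perturbation literally a volume-preserving vector field supported in the rectified slab.
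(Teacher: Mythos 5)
Your proposal follows essentially the same route as the paper: the paper establishes this lemma only by remarking that it is a direct adaptation of the proof of Lemma~\ref{rot1} (Zuppa's theorem, the Arbieto--Matheus pasting lemma, rectification via Lemma~\ref{cfbt}, and the rotational perturbation of \cite{BR}) combined with the thin-cylinder strategy of \cite[Lemma 3.4]{BV}, which is exactly the adaptation you carry out. Your identification of the key point --- the $C^1$-size of the damped rotational perturbation being of order $\xi b/a$, hence the requirement $a\geq \tau b$, with $\sigma\mathcal{C}$ absorbing the boundary cutoff --- is precisely the mechanism the paper implicitly relies on.
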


\end{subsection}
\begin{subsection}{Large length perturbations}\label{lots}
Next lemma is the continuous-time version of ~\cite[Lemma 3.3]{BV} that allows us to realize a large concatenation of time-one perturbations under certain conditions. The proof of the lemma follows closely the arguments of Bochi-Viana's  aforementioned lemma with some extra care when dealing with the involved measure. We present the guidelines of the proof emphasizing these aspects.
\begin{lemma}\label{sequences}
Given $X \in \mathfrak{X}^1_\mu(M)$, $\epsilon>0$ and $\kappa>0$, there exists $\xi>0$ with the following properties: assume that  $p \in M$ is a non-periodic point and that for some $n \in \mathbb{N}$, and for each $j \in \{0,1,...,n-1\}$ we have 
\begin{itemize}
\item co-dimension two spaces $H_j\subset N_{X^j(p)}$ such that $H_j=P_X^j(p)(H_0)$;
\item ellipses $\mathcal{B}_j \subset (N_{X^j(p)}) /H_j$ centered at zero such that $\mathcal{B}_j=(P_X^j(p)/H_0)(\mathcal{B}_0)$ and
\item linear maps $\mathcal{E}_j \colon (N_{X^j(p)}) /H_j \rightarrow (N_{X^j(p)}) /H_j$ such that $\mathcal{E}_j(\mathcal{B}_j) \subset \mathcal{B}_j$ and $\| \mathcal{E}_j - Id \| < \xi$.
\end{itemize}
For each $j \in \{0,1,...,n-1\}$  we define $L_j=P_X^1(X^j(p)) \circ R_j$, where $R_j=\mathcal{E}_j \oplus Id$. Then $\{L_0, L_1,...,L_{n-1}\}$ is an $(\epsilon, \kappa)$-realizable linear flow of length $n$ at $p$.

\end{lemma}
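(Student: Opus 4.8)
The plan is to reduce Lemma~\ref{sequences} to an iterated application of the elliptical rotation result Lemma~\ref{elliptic}, combined with the concatenation mechanism of Remark~\ref{conc}, but organized so that the relative measure loss does not accumulate linearly in $n$. The naive approach — realize each $L_j$ as an $(\epsilon,\kappa_j)$-realizable linear flow of length one and concatenate $n$ of them — fails because after $n$ steps the measure of the good set $K$ would only be controlled by $\sum_{j=0}^{n-1}\kappa_j$, which we cannot make small. The key geometric idea (as in \cite[Lemma 3.3]{BV}) is that the maps $L_j$ act as near-identity maps (the rotation factors satisfy $\|\mathcal E_j-Id\|<\xi$) on a coherent family of nested cylinders: the spaces $H_j$, the ellipses $\mathcal B_j$, and the maps $\mathcal E_j$ are all images of the initial data under $P_X^j(p)$. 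This coherence lets us build a \emph{single} flowbox of length $n$, rather than $n$ separate ones, inside which we make one perturbation realizing the whole composition $L_{n-1}\circ\cdots\circ L_0$.

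First I would set up the right cylinder structure: choose a volume-preserving chart $\alpha$ near the arc $\{X^t(p):t\in[0,n]\}$ (using Lemma~\ref{cfbt} after the Zuppa/Arbieto--Matheus regularization, exactly as in the proof of Lemma~\ref{rot1}), rectifying $X$ to a constant vector field so that the normal sections $\mathcal N_{X^j(p)}$ are parallel hyperplanes. In these coordinates the linear Poincar\'e cocycle is essentially the identity up to the intrinsic distortion, and the hypotheses say $\mathcal B_j$, $H_j$, $\mathcal E_j$ are just the pushforwards of $\mathcal B_0$, $H_0$, $\mathcal E_0$. Second, for a given $\gamma>0$ I would invoke Lemma~\ref{map} (the infinitesimal distortion estimate) to fix $r>0$ so that for any $U\subset B(p,r)\subset\mathcal N_p$, the modified section volume $\overline\mu$ is distorted by less than a prescribed amount along the entire time-$n$ flowbox $\mathcal F_X^n(p)(U)$; this is the crucial place where uniformity over the long time interval $[0,n]$ must be extracted, and it is why $n$ must be fixed before $r$ is chosen. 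Third, using the aspect-ratio hypothesis machinery of Lemma~\ref{elliptic} — applied now to the cylinder $\mathcal C$ with basis the ellipse $\mathcal B_0$ and axis a $(d-3)$-dimensional ellipsoid with sufficiently large axis-to-basis ratio $\tau$ — I would produce a divergence-free $Y$, $\epsilon$-$C^1$-close to $X$, equal to $X$ outside the flowbox, whose time-one Poincar\'e maps agree with $P_X^1(X^j(p))\circ R_j$ up to $\ll\xi$ on the appropriate sub-cylinder $\sigma\mathcal C$ at each level $j$. The point is that because $\mathcal E_j(\mathcal B_j)\subset\mathcal B_j$, the orbit of $\sigma\mathcal C$ stays inside the cylinder structure for all $n$ steps, so a single perturbation localized in the length-$n$ flowbox simultaneously realizes all the $L_j$'s on a set $K$ whose complement in $U$ has relative $\overline\mu$-measure less than $\kappa$.

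The main obstacle I anticipate is precisely the measure bookkeeping across the long flowbox: one must verify that the set $K\subset U$ of points whose forward orbit remains, for the full time $n$, inside the region where $Y$ realizes the prescribed rotations, still satisfies $\overline\mu(K)>(1-\kappa)\overline\mu(U)$. This requires (i) choosing the cylinder's aspect ratio $\tau$ from Lemma~\ref{elliptic} large enough that the ``boundary layer'' lost at each level is a small fraction of the cross-section, and (ii) summing these losses while controlling the accumulated volume distortion via Lemma~\ref{map}; the fact that $\mathcal E_j$ is a near-identity contraction of the ellipse ($\|\mathcal E_j-Id\|<\xi$) keeps the nested cylinders from shrinking or spreading too fast, so the total loss is $O(\kappa)$ uniformly in $n$ rather than $O(n\kappa)$. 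The remaining verifications — that $Y$ is divergence-free (inherited from the volume-preserving chart and the area-preserving nature of $\mathcal E_j$), that $Y=X$ outside the flowbox and $DY=DX$ on $U$ and on $\mathcal P_X^n(p)(U)$ (arrange the perturbation to vanish near $\partial$ of the flowbox), and that $\|P_Y^1(Y^j(q))-L_j\|<\gamma$ for $q\in K$ (combine the $\ll\xi$ estimate with a further shrinking of $r$ to absorb the chart/distortion errors, as in the final displayed chain of inequalities in the proof of Lemma~\ref{rot1}) — are then routine adaptations of the one-step arguments already carried out.
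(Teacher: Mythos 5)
Your proposal is correct and takes essentially the same route as the paper's proof, namely the flow adaptation of \cite[Lemma 3.3]{BV}: push a right cylinder along the orbit, realize the elliptical rotations via Lemma~\ref{elliptic}, and control the measure of the good set through the distortion estimate of Lemma~\ref{map} and the nested-cylinder structure rather than by intersecting $n$ independent good sets as in naive concatenation. The only cosmetic difference is that the paper does not make literally a single application of Lemma~\ref{elliptic} on the length-$n$ flowbox (that lemma realizes one rotation per unit time, and the rotations $\mathcal{E}_j$ differ with $j$): it applies it once per unit-time segment on the shrinking cylinders $\mathcal{C}_j=\lambda^{j}b_0\mathcal{B}_j\oplus\lambda^{2j}a_0\mathcal{A}_j$ whose axes are ``rightified'' at each step via \cite[Lemma 3.6]{BV}, glues the resulting $Y_j$ (their supports in the time-one flowboxes are disjoint), and takes $K$ to be the pullback of the final shrunken sub-cylinder --- exactly the measure bookkeeping you outline.
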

\begin{proof}
Let us fix a small $\gamma>0$ according to the definition of realizable linear flow. We have to choose a sufficiently small $r>0$ such that $\mathcal{F}^n_X(p)(B(p,r))$ is a flowbox, and 
$$\|P_X^1(q)-P_X^1(X^j(p))\|.\|R_j\|<\frac{\gamma}{2},$$
for every $q\in\mathcal{P}_X^j(p)(B(p,r))$ and $j \in \{0,1,...,n-1\}$. This $r$ will be shrunk along the proof. 

We start with a ball $\mathcal{A}_0\subset H_0$ and, for $j\in\{1,...,n-1\}$, let $\mathcal{A}_j=P_{X}^{j}(p)(\mathcal{A}_0)$. By ~\cite[Lemma 3.6]{BV}, for each $j$, there exist $\hat{\tau}_j>1$ such that if $a>\hat{\tau}_j b$ then,
\begin{equation}
P_{X}^{1}(X^{j}(p))( b\mathcal{B}_j\oplus a\mathcal{A}_j)\supset  b\mathcal{B}_{j+1}\oplus \lambda a\mathcal{A}_{j+1},
\end{equation}
for some $\lambda\in (0,1)$ chosen sufficiently close to $1$ and depending on $\kappa$. This allows us to ``rightify'', at each step, the basis of the iterated cylinder and keeping almost the same $\overline{\mu}$-measure. 

We feed Lemma~\ref{elliptic} with $\epsilon$, $\sigma$ and we get $\xi_j>0$ and, for each $j$,  $\tau_j>1$ such that if $a \geq \tau_j b$ and the diameter of each cylinder $\mathcal{C}_j$ is sufficiently small, then we can realize a rotation on the basis $\mathcal{B}_j$ for $\epsilon$-close vector fields $Y_j$.

Let $\tau=\max_{j=0,\cdots,n} \{\tau_j,\hat{\tau}_j\}$ and $\xi=\min_{j=0,\cdots,n} \{\xi_j\}$. We have to consider cylinders with the axis much larger than the basis. Actually, we fix $a_0,b_0>0$ satisfying $a_0>b_0 \lambda^{-n}\tau$.

We define, for each $j$, $\mathcal{C}_j=\lambda^{j}b_0\mathcal{B}_j\oplus \lambda^{2j}a_0\mathcal{A}_j$.

By linear approximation properties (see~\cite[Lemma 3.5]{BV}), there exists $\{r_j\}_{j=0}^{n}$ such that for $\rho>0$ and $q_j\in B(X^{j}(p),r_j)\subset \mathcal{N}_{X^{j}(p)}$, if $\rho\hat{\mathcal{C}}_j+q_j\subset B(X^{j}(p),r_j)$, then 
$$X^{1}(\rho\hat{\mathcal{C}}_j+q_j)\supset \lambda P^{1}_X(X^{j}(p))(\rho\mathcal{C}_j)+X^{1}(q_j).$$

Now we decrease $r$ a little bit in order to have, for each $j$, $X^{j}(B(p,r))\subset B(X^{j}(p),r_j)$.

Let $Y$ be the divergence-free vector field and $K$ the set defined by:
\begin{itemize}
 \item $Y|_{\mathcal{F}_X^1(X^j(p))(\hat{\mathcal{C}}_j)}=Y_j$
\item $K=\mathcal{P}_Y^{-n}(Y^{n}(p))(\sigma\rho\hat{\mathcal{C}}_n+q_n)$.
\end{itemize}

Note that $$\sigma\rho\mathcal{C}_n+q_n=\sigma\rho(\lambda^{n}b\mathcal{B}_n\oplus \lambda^{2n}a\mathcal{A}_n)+q_n.$$

Let $\mu^{\ast}$ be the Lebesgue measure with a density given by the pull-back of $\overline{\mu}$ by the volume-preserving charts. Given a right cylinder $\mathcal{C}$ defined by ellipses $\mathcal{B}$ and $\mathcal{A}$, applying the Roklin Theorem (\cite{R}), locally, using the right cylinder structure we can decompose this measure as $\mu^{\ast}=\mu_{\mathcal{B}}\times \mu_{\mathcal{A}}$.

As a consequence of Lemma~\ref{map} we get $\overline{\mu}(S)\approx x(t)\overline{\mu}(P_X^t(p)(S))$ for $S\in N_p$. For a sufficiently small set $S$ the disintegration gives 

$$
\mu^\ast(S)=\int_S d\mu_{\mathcal{B}}d\mu_{\mathcal{A}}=\int_{P_X^{t}(p)(S)}\delta(\cdot) d\mu_{\mathcal{B}_t}d\mu_{\mathcal{A}_t}\approx\int_{P_X^{t}(p)(S)}\varphi(\cdot) d\mu_{\mathcal{B}_t}d\mu_{\mathcal{A}_t}\\
,$$
where $\delta(\cdot)$ is the density with respect to the $(n-1)$-volume and $\varphi$ is a density which depends only on $t$, $\mathcal{B}_t=(P_X^{t}(p)/H_0)(\mathcal{B})$ and $\mathcal{A}_t=P_X^{t}(p)(\mathcal{A})$. Note that the same holds if one  considers the restriction to $\mathcal{A}$ or  to $\mathcal{B}$. Now

\begin{eqnarray*}
\frac{\overline{\mu}(K)}{\overline{\mu}(U)}&=&\frac{\overline{\mu}(\mathcal{P}_Y^{-n}(Y^{n}(p))(\sigma\rho\hat{\mathcal{C}}_n+q_n))}{\mu^\ast(\rho b\mathcal{B}_0\oplus \rho a\mathcal{A}_0)}\\
&=&\frac{\|X(p)\|\,\overline{\mu}(\mathcal{P}_Y^{-n}(Y^{n}(p))(\sigma\rho\hat{\mathcal{C}}_n+q_n))}{\|X(p)\|\,\mu^\ast(\rho b\mathcal{B}_0\oplus \rho a\mathcal{A}_0)}\\ 
&\approx&\frac{\|X(X^{n}(p))\|\,\mu^\ast(\sigma\rho\lambda^{n}b\mathcal{B}_n\oplus\sigma\rho \lambda^{2n}a\mathcal{A}_n+q_n)}{\|X(p)\|\,\mu^\ast(\rho b\mathcal{B}_0\oplus \rho a\mathcal{A}_0)}\\
&=& x(n)\frac{\mu^\ast(\sigma\rho\lambda^{n}b(P_X^{n}(p)/H_0)(\mathcal{B}_0)\oplus \sigma\rho\lambda^{2n}aP_X^n(p)(\mathcal{A}_0)+q_n)}{\mu^\ast(\rho b\mathcal{B}_0\oplus \rho a\mathcal{A}_0)}\\
&=& x(n)\frac{\mu^\ast(\sigma\lambda^{n}(P_X^{n}(p)/H_0)(\mathcal{B}_0)\oplus\sigma \lambda^{2n}P_X^n(p)(\mathcal{A}_0)+q_n)}{\mu^\ast( \mathcal{B}_0\oplus  \mathcal{A}_0)}\\
&=& x(n)\frac{(\lambda^{n}\sigma)^{3}\mu_{\mathcal{B}_n}((P_X^{n}(p)/H_0)(\mathcal{B}_0))(\lambda^{2n}\sigma)^{d-3} \mu_{\mathcal{A}_n}(P_X^n(p)(\mathcal{A}_0))}{\mu_{\mathcal{B}}( \mathcal{B}_0)\mu_{\mathcal{A}}(\mathcal{A}_0)}\\
&\approx& x(n)\frac{(\lambda^{n}\sigma)^{3}(\lambda^{2n}\sigma)^{d-3}x(n)^{-1}\mu_{\mathcal{B}}(\mathcal{B}_0) \mu_{\mathcal{A}}(\mathcal{A}_0)}{\mu_{\mathcal{B}}( \mathcal{B}_0)\mu_{\mathcal{A}}(\mathcal{A}_0)},
\end{eqnarray*}

and we obtain $$\frac{\overline{\mu}(K)}{\overline{\mu}(U)}\approx \lambda^{2nd-3n}\sigma^d.$$
Now it is clear that $\lambda$ and $\sigma$ can be chosen such that condition (a) of Definition~\ref{rlf} is satisfied.

We are left to prove that if $q\in{K}$, then $\|P^{1}_{Y}(Y^{j}(q))-L_{j}\|<\gamma$ for
$j=0,1,...,\ell-1$. Since $L_j=P_X^1(X^j(p))\circ R_j$ we get
\begin{eqnarray*}
\|P^{1}_{Y}(Y^{j}(q))-L_{j}\|&=&\|P^{1}_{Y}(Y^{j}(q))-P_X^1(X^j(p))\circ R_j\|\\
&=&\|P^{1}_{Y}(Y^{j}(q))-P^{1}_{X}(X^{j}(q))+\\
&+& P^{1}_{X}(X^{j}(q))-P_X^1(X^j(p))\circ R_j\|\\
&\leq&\|P^{1}_{Y}(Y^{j}(q))-P^{1}_{X}(X^{j}(q))\|+\frac{\gamma}{2}<\gamma,
\end{eqnarray*}
where the last inequality is assured if we take $r$ small enough.
\end{proof}

\end{subsection}

\begin{subsection}{Proof of Proposition~\ref{exchange}}

In order to prove Proposition~\ref{exchange} we follow the strategy in~\cite[Proposition 3.1]{BV}. This proposition has an easy proof when the lack of dominated splitting comes from a small angle between the two fibers $U$ and $S$ or else comes from the fact that $S$ ``expands'' much more than $U$.

Let $X\in{\mathfrak{X}^{1}_{\mu}(M)}$, $\epsilon>0$ and $0<\kappa<1$ be given as in Proposition~\ref{exchange}. Take $\kappa'\in\left(0,\frac{1}{2}\kappa\right)$ and let $\xi_{0}=\xi_0(X,\epsilon, \kappa^\prime)$ be given by Lemma~\ref{rot1}. 
Finally, take:
\begin{equation}\label{choices}
c\geq \frac{1}{\sin^2(\xi_0)}\text{       and        } c\geq \sup_{t \in [0,2]}\left(\underset{x\in  {\mathcal{R}(X)}}{\sup}{\frac{\|P_{X}^t(x)\|}{\mathfrak{m}(P_{X}^{t}(x))}}\right)
\end{equation}

Let $\theta>0$ be such that $8\sqrt{2}c\sin\theta < \epsilon \sin^{6}(\xi_0)$. Take $m\geq 2\pi/\theta$.

Now let be given a non-periodic point $p$ and a splitting of the normal bundle at $p$, $N_{p}=U_{p}\oplus S_{p}$ such that 

$$\frac{\|P_{X}^{m}(p)|_{S_{p}}\|}{\mathfrak{m}(P_{X}^{m}(p)|_{U_{p}})}\geq \frac{1}{2},$$
and we assume that 
\begin{equation}\label{case1}
\text{there exists  } t\in[0,m]\text{ such that }\measuredangle(U_{t},S_{t})=\xi\leq\xi_{0},
\end{equation}
where we use the notation $U_{t}=P_{X}^{t}(p)(U_{p})$ and $S_{t}=P_{X}^{t}(p)(S_{p})$ for $t\in[0,m]$. 

Then we take unit vectors $s_{t}\in S_{t}$ and  $u_{t}\in U_{t}$ with $\measuredangle(s_{t},u_{t})<\xi_{0}$. If $t\in[0,m-1]$, then we use Lemma~\ref{rot1} with $V_{X^{t}(p)}=\langle s_{t},u_{t} \rangle$, where $\langle e_{1},e_{2}\rangle$ denotes the vector space spanned by $e_{1}$ and $e_{2}$, and we define the sequence:
\begin{equation}\label{prime}
L_{1}=P_{X}^{t}(p)\text{  ,   }L_{2}=P_{X}^{1}(X^{t}(p))\circ R_\xi\text{   and    }L_{3}=P_{X}^{m-t-1}(X^{t+1}(p))
\end{equation}
On the other hand, if $t\in(m-1,m]$, then we use Remark~\ref{rot1b} and we define:
\begin{equation}\label{second}
L_{1}=P_{X}^{t-1}(p)\text{  ,   }L_{2}=R_\xi\circ P_{X}^{1}(X^{t-1}(p)) \text{   and    }L_{3}=P_{X}^{m-t}(X^{t}(p))
\end{equation}
It is clear, using Remark~\ref{concat}, that the concatenation of three realizable linear flows (\ref{prime}) is an $(\epsilon,\kappa)$-realizable linear flow of length $m$ at $p$. The same works for (\ref{second}). 

In both cases we obtain vectors $\mathfrak{u}\in U_{p}\setminus \{\vec{0}\}$ and $\mathfrak{s}\in P_{X}^{m}(S_{p})\setminus \{\vec{0}\}$ such that $L_{3}\circ L_{2}\circ L_{1}(\mathfrak{u})=\mathfrak{s}$. Therefore, under the hypothesis (\ref{case1}), the proof of Proposition~\ref{exchange} is completed.

\bigskip

Now we assume that there exist $r,t\in\mathbb{R}$ with $0\leq r+t\leq m$ such that:
\begin{equation}\label{case2}
\frac{\|P_{X}^{r}(X^{t}(p))|_{S_{t}}\|}{\mathfrak{m}(P_{X}^{r}(X^{t}(p))|_{U_{t}})}\geq c. 
\end{equation}
Observe that, by the choice of $c$, we have that $r\geq 2$.
 
We choose unit vectors:
\begin{itemize}
\item $s_{t}\in S_{t}$ such that $\|P_{X}^{r}(X^{t}(p))\cdot s_{t}\|=\|P_{X}^{r}(X^{t}(p))|_{S_{t}}\|$;  \item $u_{t}\in U_{t}$ such that $\|P_{X}^{r}(X^{t}(p))\cdot u_{t}\|=\mathfrak{m}(P_{X}^{r}(X^{t}(p))|_{U_{t}})$;
\item $u_{t+r}=\frac{P_{X}^{r}(X^{t}(p))\cdot u_{t}}{\|P_{X}^{r}(X^{t}(p))\cdot u_{t}\|}\in U_{t+r}$ and
\item $s_{t+r}=\frac{P_{X}^{r}(X^{t}(p))\cdot s_{t}}{\|P_{X}^{r}(X^{t}(p))\cdot s_{t}\|}\in S_{t+r}$.
\end{itemize}
The vector $\hat{u}_{t}= u_{t}+\sin(\xi_{0})s_{t}$ is such that $\measuredangle(\hat{u}_{t},u_{t})<\xi_{0}$ so we consider $L_{2}=P_{X}^{1}(X^{t}(p))\circ R_{2}$, where $R_{2}$ is the rotation on $V_{X^{t}(p)}=\langle s_{t},u_{t} \rangle$, which sends $u_{t}$ into $\frac{\hat{u}_{t}}{\|\hat{u}_{t} \|}$. 

Let $$\varrho=\frac{\|P_{X}^{r}(X^{t}(p))\cdot u_{t}\|}{\sin\xi_{0} \|P_{X}^{r}(X^{t}(p))\cdot s_{t}\|}.$$ Let us define a vector in $N_{X^{t+r}(p)}$ by $\hat{s}_{t+r}=\varrho u_{t+r}+s_{t+r}$. We have that,
\begin{eqnarray*}
P_{X}^{r}(X^{t}(p))\cdot\hat{u}_{t}&=& P_{X}^{r}(X^{t}(p))\cdot  u_{t}+\sin(\xi_{0})P_{X}^{r}(X^{t}(p))\cdot s_{t} \\
&=&P_{X}^{r}(X^{t}(p))\cdot  u_{t}+\frac{\|P_{X}^{r}(X^{t}(p))\cdot u_{t}\|}{\varrho\|P_{X}^{r}(X^{t}(p))\cdot s_{t}\|}P_{X}^{r}(X^{t}(p))\cdot s_{t}\\
&=& \frac{1}{\varrho}\|P_{X}^{r}(X^{t}(p))\cdot u_{t}\|.(\varrho u_{t+r}+s_{t+r})\\
&=& \frac{1}{\varrho}\|P_{X}^{r}(X^{t}(p))\cdot u_{t}\|.\hat{s}_{t+r}.
\end{eqnarray*}
It follows from ~(\ref{choices}),~(\ref{case2}) and definition of $\varrho$, $u_{t}$ and $s_{t}$, that 
$$\varrho=\frac{\mathfrak{m}(P_{X}^{r}(X^{t}(p))|_{U_{t}})}{\|P_{X}^{r}(X^{t}(p))|_{S_{t}}\|\sin\xi_{0}}\leq \frac{1}{c\sin\xi_{0}}<\sin\xi_{0}.$$ 
So, $\measuredangle(s_{t+r},\hat{s}_{t+r})< \xi_{0}$. Let $L_{4}=R_{4}\circ P_{X}^{1}(X^{t+r-1}(p))$, where $R_{4}$ acts in $V_{X^{t+r}(p)}=\langle s_{t+r},\hat{s}_{t+r} \rangle$ and sends $\frac{\hat{s}_{t+r}}{\|\hat{s}_{t+r}\|}$ into $s_{t+r}$. By Remark~\ref{rot1b} we obtain that $L_{4}$ is a realizable linear flow of length $1$ at $X^{t+r-1}(p)$.  Now we concatenate as follows:
$$N_{p}\overset{L_{1}}{\longrightarrow}N_{X^{t}(p)}\overset{L_{2}}{\longrightarrow}N_{X^{t+1}(p)}\overset{L_{3}}{\longrightarrow}N_{X^{t+r-1}(p)}\overset{L_{4}}{\longrightarrow}N_{X^{t+r}(p)}\overset{L_{5}}{\longrightarrow}N_{X^{m}(p)},$$
where $L_{1}=P_{X}^{t}(p)$, $L_{3}=P_{X}^{r-2}(X^{t+1}(p))$ and $L_{5}=P_{X}^{m-t-r}(X^{t+r}(p))$, and $L_3=Id$ if $r=2$. In this way, applying Lemma ~\ref{rot1} twice and recalling that $L_2$ and $L_4$ are $(\epsilon,\kappa^\prime)$-realizable linear flows of length $1$, we obtain an $(\epsilon,\kappa)$-realizable linear flow of length $m$ at $p$ such that $L_{5}\circ L_{4}\circ L_{3}\circ L_{2}\circ L_{1}(\mathfrak{u})=\mathfrak{s}$, where $\mathfrak{u}=P_X^{-t}(X^t(p))\cdot u_t$ and $\mathfrak{s}$ is a vector co-linear with $P_X^{m-t}(X^t(p)) \cdot s_t$.

So, assuming (\ref{case2}), the proof of Proposition~\ref{exchange} is done.

\bigskip

Now we shall finish the proof of Proposition~\ref{exchange} by considering the last case, that is, when we have  that conditions (\ref{case1}) and (\ref{case2}) are not simultaneously satisfied, that is:
\begin{equation}\label{case3a}
\text{for all  } t\in[0,m]\text{ we have that }\measuredangle(U_{t},S_{t})>\xi_{0},
\end{equation}
and for all $r,t\in\mathbb{R}$ with $0\leq r+t\leq m$ we have:
\begin{equation}\label{case3b}
\frac{\|P_{X}^{r}(X^{t}(p))|_{S_{t}}\|}{\mathfrak{m}(P_{X}^{r}(X^{t}(p))|_{U_{t}})}< c. 
\end{equation}
The conditions (\ref{case3a}) and (\ref{case3b}) together with~\cite[Lemma 3.8]{BV} allows us to conclude that for every $t\in [0,m]$ we have

\begin{equation}\label{lemmaBV}
\frac{\|P_X^{t}(q)/H_0\|}{\mathfrak{m}(P_X^{t}(q)/H_0)} \leq \frac{8c}{\sin^{6}(\xi_0)}.
\end{equation}

We define unit vectors $u\in U_0$ and $s\in S_0$ such that 
\begin{itemize}
 \item $\|P_X^m(p)\cdot u\|=\mathfrak{m}(P_X^m(p)|_{U_0})$ and
 \item $\|P_X^m(p)\cdot s\|=\|P_X^m(p)|_{S_0})\|$. 
\end{itemize}
Now define $s^{\prime}=\frac{P_X^m(p)\cdot s}{\|P_X^m(p)\cdot s\|}\in S_m$. Like in ~\cite[Lemma 3.8]{BV} we consider $G_0=U_0\cap u^{\perp}$, $G_t=P_X^t(p)(G_0)\subseteq U_t$ for $t\in[0,m]$, $F_m=S_m\cap (s^{\prime})^{\perp}$ and $F_t=P_X^{t-m}(p)(F_m)\subseteq S_t$ for $t\in[0,m]$. We consider unit vectors $v_t\in U_t\cap G_t^{\perp}$ and $w_t\in S_t\cap F_t^{\perp}$ for $t\in[0,m]$.

We continue defining some useful objects; let, for $t\in[0,m]$, $H_t=G_t\oplus F_t$ and $I_t=v_t\cdot\mathbb{R}\oplus w_t\cdot\mathbb{R}$. Let $\mathcal{B}_0\subset N_p/H_0$ be a ball and, for $t\in[0,m]$, $\mathcal{B}_t=(P_X^t(p)/H_0)(\mathcal{B}_0)$.

Since $m\geq 2\pi/\theta$ we take $\{\theta\}_{j=0}^{m-1}$ such that $|\theta_j|\leq \theta$ for all $j$ and $\sum_{j=0}^{m-1}\theta_j=\measuredangle(v_0+H_0,w_0+H_0)$.

Let $\mathcal{E}_{\theta_j}$ be the rotation of angle $\theta_j$ in $N_p/H_0$ and define:

$$R_{\theta_j}=(P_X^j(p)/H_0)\circ \mathcal{E}_{\theta_j}\circ (P_X^j(p)/H_0)^{-1}.$$

Clearly we have $R_{\theta_j}(\mathcal{B}_j)=\mathcal{B}_j$ for all $j$. Moreover, by (\ref{lemmaBV}) we obtain,

$$\|R_{\theta_j}-Id\|\leq \frac{\|P_X^j(p)/H_0\|}{\mathfrak{m}(P_X^j(p)/H_0)}\|\mathcal{E}_{\theta_j}-Id\|\leq \frac{8c}{\sin^6(\xi_0)}\sqrt{2}\sin\theta.$$

For each $j \in \{0,1,...,n-1\}$  we define $L_j=P_X^1(X^j(p)) \circ R_j$. Then, by Lemma~\ref{sequences}, $\{L_0, L_1,...,L_{n-1}\}$ is an $(\epsilon, \kappa)$-realizable linear flow of length $n$ at $p$. 

Notice that $L_j|_{H_j}=P_X^1(X^j(p))|_{H_j}$ and 
$$L_j/H_j=(P_X^1(X^j(p))/H_j)\circ R_{\theta_j}=(P_X^{j+1}(p)/H_0)\circ\mathcal{E}_{\theta_j}\circ (P_X^j(p)/H_0)^{-1}.$$ 

By composing the sequences we obtain,

$$(L_{m-1}\circ ...\circ L_0)/H_0=(P_X^m(p)/H_0)\circ \mathcal{E}_{\theta_j}\circ ...\circ \mathcal{E}_{\theta_1}.$$

Applying to $v_0+H_0$ we get 
$$(L_{m-1}\circ ...\circ L_0)(v_0+H_0)=(P_X^m(p)/H_0)(w_0+H_0)=P_X^m(p)(w_0)+H_m.$$

Note that $H_m=G_m\oplus I_m$. Then, $$(L_{m-1}\circ ...\circ L_0)(v_0)=P_X^m(p)(w_0)+g_m+i_m,$$ where $g_m\in G_m$ and $i_m\in I_m$.

Let $g_0=(P_X^m(p))^{-1}(g_m)\in G_0\subset H_0\cap U_0$. 

Clearly, $(L_{m-1}\circ ...\circ L_0)(g_0)=g_m$, therefore the vector $v_0-g_0\in U_0$ is such that

$$(L_{m-1}\circ ...\circ L_0)(v_0-g_0)=P_X^m(p)(w_0)+i_m\in S_m,$$
and Proposition~\ref{exchange} is proved.

\end{subsection}

\end{section}

\section*{Acknowledgements}

MB was partially supported by Funda\c c\~ao para a Ci\^encia e a Tecnologia, SFRH/BPD/
20890/2004. 
JR was partially supported by Funda\c c\~ao para a Ci\^encia e a Tecnologia through the Program FCT-POCTI/MAT/61237/2004. 


\end{document}